\DeclareMathOperator{\rng}{\mathrm{rng}}
\DeclareMathOperator{\supp}{\mathrm{supp}}
\newcommand{\MAlg}{\mathrm{MAlg}}
\newcommand{\Aut}{\mathrm{Aut}}
  \newcommand{\R}{\mathbb R}
  \newcommand{\N}{\mathbb N}
  \newcommand{\Z}{\mathbb Z}
  \newcommand{\LL}{\mathrm L}
  \newcommand{\U}{\mathcal U}
 \newcommand{\dom}{\mathrm{dom}\;}
  \newcommand{\inv}{^{-1}}
  \renewcommand{\H}{\mathcal H}
  \renewcommand{\leq}{\leqslant}
  \renewcommand{\geq}{\geqslant}
  \newcommand{\abs}[1]{\left\lvert #1\right\rvert}
  \newcommand{\norm}[1]{\left\lVert #1\right\rVert}
  \newcommand{\act}{\curvearrowright}
  \newcommand{\impl}{\Rightarrow}
  \newcommand{\la}{\left\langle}
  \newcommand{\ra}{\right\rangle}
  \DeclareMathOperator{\diam}{\mathrm{diam}}
\newtheorem{thmi}{Theorem}
\theoremstyle{definition}
\newenvironment{exam}{\begin{enonce}[remark]{Example}}{\end{enonce}}
\newenvironment{ques}{\begin{enonce}{Question}}{\end{enonce}}
\newtheorem*{qui}{Question}
\newtheorem*{conji}{Conjecture}
\newtheorem*{prob}{Problem}
\newtheorem*{claim}{Claim}
\theoremstyle{remark}
\newtheorem*{ackn}{Aknowledgments}
\newenvironment{claimproof}
  {%
    \proof[Proof of claim]%
  }
  {%
    \endproof%
  }
\newcommand{\id}{\mathrm{id}}
\title[A measurable analogue of topological full groups II]{On a measurable 
analogue of small topological full groups II}
\author{François Le Maître}
\address{Université de Paris, Sorbonne Université, CNRS,\\ 
	Institut de Mathématiques de Jussieu-Paris Rive Gauche,\\
	F-75013 Paris, France}
\begin{document}

\maketitle

\begin{abstract}
We pursue the study of $\LL^1$ full groups of graphings and of the closures of their derived groups, which we call derived $\LL^1$ full groups. Our main result shows that aperiodic probability measure-preserving actions of finitely generated groups have finite Rokhlin entropy if and only if their derived $\LL^1$ full group has finite topological rank.  We further show that a graphing is amenable if and only if its $\LL^1$ full group is, and explain why various examples of (derived) $\LL^1$ full groups fit very well into Rosendal's geometric framework for Polish groups. As an application, we obtain that every abstract group isomorphism between $\LL^1$ full groups of amenable ergodic graphings must be a quasi-isometry for their respective $\LL^1$ metrics. We finally show that $\LL^1$ full groups of rank one transformations have topological rank 2. 
\end{abstract}

\section{Introduction}

In his pioneering work \cite{dyeGroupsMeasurePreserving1959}, Dye defined \textbf{full} subgroups of the group $\Aut(X,\mu)$ of all measure-preserving transformations of a standard probability space $(X,\mu)$ as those subgroups $\mathbb G$ which are stable under cutting and pasting their elements along a countable measurable partition: for every measurable partition $(A_n)_{n\in\N}$  of $X$ and every sequence $(T_n)_{\in\N}$ of elements of $\mathbb G$ such that $(T_n(A_n))_{n\in\N}$ is also a partition of $X$, the measure preserving transformation $T$ defined by $T(x)=T_n(x)$ for all $x\in A_n$ also belongs to $\mathbb G$. 
Then, given a measure-preserving action of a countable group $\Gamma$ on a standard probability space $(X,\mu)$ (i.e. a homomorphism $\alpha:\Gamma\to\Aut(X,\mu)$), the smallest full subgroup of $\Aut(X,\mu)$ containing $\alpha(\Gamma)$ is called the \textbf{full group} $[\alpha(\Gamma)]$ of the action $\alpha$. 

 The full group of the action $\alpha$ can be concretely understood as the group of all measure-preserving transformations $T$ such that $T(x)$ belongs to the $\Gamma$-orbit of $x$ for all $x\in X$. Moreover, if two actions are conjugate, then their full groups are isomorphic. It was then realized that the full group was exactly capturing the partition of the space into orbits induced by the action, leading to the notion of \emph{orbit equivalence} which is by now a fairly large subject on its own (see for instance the survey \cite{gaboriauOrbitEquivalenceMeasured2011}).

The present paper is about a family of smaller subgroups which one can associated to any measure-preserving action of any \emph{finitely generated} group. These were defined in \cite{lemaitremeasurableanaloguesmall2018} and are called $\LL^p$ full groups, although they are not full as subgroups of $\Aut(X,\mu)$, but only \textbf{finitely full}, which means that they are stable under cutting and pasting their elements along a \emph{finite} measurable partition.

 Their definition uses the extra geometric structure provided by the Schreier graph of the action. To be more precise, given a measure preserving action $\alpha:\Gamma\to\Aut(X,\mu)$ of a finitely generated group $\Gamma$ and a finite symmetric generating set $S$ for $\Gamma$, let us first denote by $d_S$ the path metric associated to the Schreier graph of the action with respect to the set $S$. For $p\in[1,+\infty[$, we then define the $\LL^p$ \textbf{full group} of $(\alpha,S)$ be the group of all $T\in [\alpha(\Gamma)]$ such that the map 
$$x\mapsto d_S(x,T(x))$$
is in $\LL^p(X,\mu)$. Changing the generating set $S$ does not change the $\LL^p$ full group, so the $\LL^p$ full group of the action $\alpha$ itself is well-defined, and we denote it by $[\alpha(\Gamma)]_p$. If two actions are conjugate, then their $\LL^p$ full groups are isomorphic. Moreover, the natural $\LL^p$ metric on $\LL^p$ full groups is complete, separable, right-invariant and endows the $\LL^p$ full groups with a group topology which does not depend on the choice of the finite generating set. It is thus natural to ask the following.
\begin{qui}
	 How are properties of the action reflected in the $\LL^p$ full group as a 
	 topological group ?	 
 \end{qui} 
One of the natural invariants of topological groups is the \textbf{topological rank}, namely the smallest number of elements needed to generated a dense subgroup, so it is natural to ask how the action influences the topological rank of the associated $\LL^p$ full group. \\

 In \cite{lemaitremeasurableanaloguesmall2018} we showed that the topological rank of the $\LL^1$ full group of an ergodic $\Z$-action was finite if and only if the action had finite entropy.
The main goal of the present paper is to extend this result to general 
measure-preserving actions of finitely generated groups. In order to do so, the 
right notion of entropy is \emph{Rokhlin entropy}, which was recently 
introduced by Seward \cite{sewardKriegerfinitegenerator2019}. It is the infimum 
of the entropies of generating partitions and, for $\Z$-actions, it coincides 
with the usual definition of entropy of a measure-preserving transformation by 
a result of Rokhlin \cite{rokhlinLecturesentropytheory1967}. A crucial result 
of Seward is that the finiteness of the Rokhlin entropy characterizes 
measure-preserving subshifts, generalizing Krieger's theorem to the widest 
imaginable setup (see Section \ref{sec: def Rokhlin entropy} for more on this).

Our proof that ergodic $\Z$-actions of finite entropy have a topologically finitely generated $\LL^1$ full group used crucially the \textbf{derived $\LL^1$ full group} , defined as the closure of the  subgroup generated by commutators in the $\LL^1$ full group of the action\footnote{The quotient of the $\LL^1$ full group by the derived $\LL^1$ full group is the largest abelian continuous quotient of the $\LL^1$ full group, and is thus called the \emph{topological abelianization} of the $\LL^1$ full group.}. Indeed, the latter is far easier to understand, in particular it is topologically generated by naturally defined involutions arising from the generating set of $\Gamma$ (see Theorem \ref{thm: topo gen by I_phi}  below). The main result from \cite{lemaitremeasurableanaloguesmall2018} is then a direct consequence of the following two statements:

\begin{enumerate}[(1)]
	\item The derived $\LL^1$ full group is topologically finitely generated as soon as the action has finite entropy.
	\item The topological abelianization of the $\LL^1$ full group of an ergodic $\Z$-action is isomorphic to $\Z$, in particular it always has finite topological rank. 
\end{enumerate}

 In the present paper, we generalize the first item from the above list as follows (see Section \ref{sec: proof of finite topo rank} for the proof). 

\begin{thmi}\label{thmi: finite topo rank}
	Let $\Gamma$ be a finitely generated group, let $\alpha:\Gamma\to\Aut(X,\mu)$ be a measure-preserving aperiodic $\Gamma$-action on $(X,\mu)$. The following are equivalent.
	\begin{enumerate}[(i)]
		\item The action has finite Rokhlin entropy.
		\item \label{itemi: finite topo rank}The derived $\LL^1$ full group of the action has finite topological rank. 
	\end{enumerate}
\end{thmi}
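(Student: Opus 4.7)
My overall strategy is to analyze the elementary involutions that topologically generate the derived $\LL^1$ full group according to Theorem~\ref{thm: topo gen by I_phi}. These are involutions of the form $I_{s,A}$ which swap $A$ with $s\cdot A$, where $s$ ranges over a fixed finite symmetric generating set $S$ of $\Gamma$ and $A\subseteq X$ is measurable with $A\cap s\cdot A = \emptyset$. Both directions should parallel the $\Z$-case from \cite{lemaitremeasurableanaloguesmall2018}, with Rokhlin entropy and Seward's theorem replacing classical entropy and Krieger's theorem.

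For the direction (i) $\Rightarrow$ (ii), Seward's theorem lets us assume that the action is isomorphic to a $\Gamma$-subshift, so that the canonical partition by the value at the identity is a finite generating partition $\mathcal P$. In this symbolic setting I would explicitly define finitely many involutions based on swapping finite-window cylinders whenever both the preimage and image configurations remain in the subshift. The claim to prove is then that, in $\LL^1$-closure, the group they generate contains every elementary involution $I_{s,A}$: arbitrary measurable $A$ are approximable by finite unions of cylinders from iterated $\Gamma$-translates of $\mathcal P$, and each such cylinder-swap should be expressible as a product of conjugates of the explicit generators by elements of the dense subgroup already built.

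For the direction (ii) $\Rightarrow$ (i), starting from finitely many topological generators $T_1,\ldots,T_k$ of the derived $\LL^1$ full group, I would encode where each $T_i$ sends $x$ relative to the Schreier graph, using the $\LL^1$-integrability of the displacements $x\mapsto d_S(x,T_i(x))$. The partition $\mathcal Q$ I construct would record for each $x$ a tuple consisting of the local $S$-word bringing $x$ to $T_i(x)$, suitably coarsened (for instance by bundling all atoms of displacement larger than some threshold $n$, and letting $n$ depend on the magnitude of measure) to achieve $H(\mathcal Q)<\infty$ despite $\LL^1$-integrability not immediately yielding finite Shannon entropy. To see that $\mathcal Q$ is generating, one argues that its $\Gamma$-saturation separates $\mu$-almost every pair of points: topological density of $\langle T_1,\ldots,T_k\rangle$ in the derived $\LL^1$ full group, combined with aperiodicity of $\alpha$, should force the recorded displacement data to distinguish points within each orbit.

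I expect the main obstacle to lie in the direction (ii) $\Rightarrow$ (i). In the $\Z$-case, the linear order on orbits gave a direct route from finitely many generators to a generating partition of finite entropy; for general finitely generated $\Gamma$, orbits carry genuine geometric complexity, and translating topological density in the derived $\LL^1$ full group into $\sigma$-algebra generation for a carefully engineered finite-entropy partition is considerably more subtle. Performing this entropy bookkeeping---making the coarsening of large-displacement atoms fine enough to remain generating but coarse enough to keep $H(\mathcal Q)<\infty$---without sacrificing either property is the core technical challenge I foresee, and likely requires leveraging the geometric framework for $\LL^1$ full groups alluded to in the abstract.
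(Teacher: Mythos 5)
Your direction (ii) $\Rightarrow$ (i) is close in spirit to the paper's argument, but you have misplaced the difficulty. No coarsening of the cocycle partition is needed (and coarsening would endanger the generating property you need afterwards): integrability of $x\mapsto d_S(e,c_{T_i}(x))$ already forces the observable $c_{T_i}$ to have finite Shannon entropy, because $\Gamma$ has at most exponentially many elements of word length $n$; this is exactly Proposition \ref{prop: L1 has finite entropy} (Austin's lemma), which the paper invokes directly. The genuinely delicate point, which your sketch leaves at the level of ``should force the recorded displacement data to distinguish points'', is proving that the join of the $\Gamma$-translates of the cocycle partitions generates the full $\sigma$-algebra. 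The paper does this by showing that the generated $\sigma$-algebra $\tilde{\mathcal A}$ is invariant under the group $\langle T_1,\dots,T_n\rangle$ (every element of that group admits an $\tilde{\mathcal A}$-measurable cocycle), so its closure in $\MAlg(X,\mu)$ is stable under approximation by that group and hence contains every set with the same $\mathcal R_\Gamma$-conditional measure as any of its members (Proposition \ref{prop: transitive on cond measure} together with Lemma \ref{lem: manyperiodic}); one then seeds the algebra with one extra observable coming from a set of conditional measure $1/2$ and bootstraps with Maharam's lemma. Your sketch omits both the invariance mechanism and the extra observable, and the latter is genuinely needed since the action is only assumed aperiodic, not ergodic.

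The more serious gap is in (i) $\Rightarrow$ (ii). Asserting that finitely many finite-window cylinder swaps topologically generate the derived $\LL^1$ full group is precisely the hard content of this direction, and your sketch offers no mechanism for producing an arbitrary cylinder swap from a fixed finite collection: in the $\Z$-case this relied on the linear order on orbits, which is unavailable for a general finitely generated $\Gamma$. The paper's solution is to outsource this to Nekrashevych: after Seward's theorem one upgrades the measurable subshift to a \emph{topological} subshift on the Cantor space all of whose orbits have cardinality at least $5$ (Proposition \ref{prop: topo subshift with not too small orbits} --- the lower bound on orbit sizes is a nontrivial hypothesis you would also need to arrange), applies Theorem \ref{thm: Nekrashevych expansive vs finite rank} to conclude that the alternating topological full group is finitely generated, and then proves that the alternating topological full group is dense in the derived $\LL^1$ full group (Proposition \ref{prop: alternating dense in derived L1}, via approximation of $3$-cycles). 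Without importing a finite-generation theorem of this strength, your plan would amount to reproving it from scratch, and nothing in your outline indicates how the combinatorics of that argument would go.
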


Apart from Seward's characterization of subshifts, another key ingredient in the proof is Nekrashevych's work on alternating topological full groups \cite{nekrashevychSimplegroupsdynamical2017}, which provides dense finitely generated subgroups in the derived $\LL^1$ full groups. The reader is referred to Section \ref{sec: topo full def} for the definition of alternating topological full group and Section \ref{sec: dense topo full} for the proof of their density in $\LL^1$ full groups.

In the above result, it is natural to ask whether one can remove the “derived” adjective in item \eqref{itemi: finite topo rank}, as in the $\Z$-case. However, we have no idea what the topological abelianization of the $\LL^1$ full group of a general measure-preserving ergodic action of a finitely generated group looks like, and in particular we do not know when it happens to be topologically finitely generated. We don't have any example or non example, even for $\Z^2$-actions. \\

Let us now consider another invariant of topological groups: amenability. In \cite{lemaitremeasurableanaloguesmall2018} we showed that if the $\LL^1$ full group of a free action of a finitely generated group $\Gamma$ was amenable, then $\Gamma$ itself was amenable, and left the question of the converse open. Here we show that the converse holds so as to obtain the following result (see section \ref{sec:amenability} for definitions and a more general statement in terms of \emph{graphings}).

\begin{thmi}\label{thmi: amenable characterization}
Let $\Gamma$ be a finitely generated group acting freely on a standard probability space $(X,\mu)$. Then the following are equivalent:
\begin{enumerate}[(i)]
	\item $\Gamma$ is amenable;
	\item The $\LL^1$ full group of the action is amenable;
	\item The derived $\LL^1$ full group of the action is amenable;
	\item The derived $\LL^1$ full group of the action is extremely amenable;
	\item The derived $\LL^1$ full group of the action is whirly amenable.
\end{enumerate}
\end{thmi}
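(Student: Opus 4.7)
The plan is to prove the cycle (v) $\Rightarrow$ (iv) $\Rightarrow$ (iii) $\Rightarrow$ (ii) $\Rightarrow$ (i) $\Rightarrow$ (v). Three of these links are either formal or already established: (v) $\Rightarrow$ (iv) $\Rightarrow$ (iii) are general facts about Polish groups (whirly amenability strengthens extreme amenability via the work of Glasner--Tsirelson--Weiss, and extreme amenability implies amenability by considering the action on the space of probability measures); and (ii) $\Rightarrow$ (i) is proved in \cite{lemaitremeasurableanaloguesmall2018}. For (iii) $\Rightarrow$ (ii), the derived $\LL^1$ full group is by definition a closed normal subgroup of the $\LL^1$ full group whose quotient is the topological abelianization, which is abelian and hence amenable; amenability of Polish groups is preserved under such extensions, so amenability transfers from the derived subgroup to the whole $\LL^1$ full group.

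The substantial new content is (i) $\Rightarrow$ (v). When $\Gamma$ is amenable and acts freely on $(X,\mu)$, the orbit equivalence relation $\mathcal R$ is hyperfinite by the Connes--Feldman--Weiss theorem, so one may write $\mathcal R = \bigcup_n \mathcal R_n$ as an increasing union of finite measure-preserving sub-equivalence relations. A key preparatory step is to arrange the exhaustion so that the fiberwise $d_S$-diameter of $\mathcal R_n$ is integrable; this ensures that Nekrashevych's alternating topological full group $\mathrm{Alt}(\mathcal R_n)$ is actually contained in the derived $\LL^1$ full group. By the density result for alternating topological full groups already established earlier in the paper (Section~\ref{sec: dense topo full}), the union $\bigcup_n \mathrm{Alt}(\mathcal R_n)$ is dense in the derived $\LL^1$ full group.

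Each $\mathrm{Alt}(\mathcal R_n)$ is a compact Polish group that decomposes as a measurable direct product of finite alternating groups $\mathrm{Alt}(k)$ indexed by the classes of $\mathcal R_n$. The $\LL^1$ metric restricted to $\mathrm{Alt}(\mathcal R_n)$ is, up to normalization, a fiberwise Hamming-type metric weighted by the $d_S$-diameters of the classes. By Maurey's concentration inequality the finite alternating groups form a Lévy family in the normalized Hamming metric, and this property is stable under weighted measurable direct products provided the weights are controlled. Combined with the density statement above, this will exhibit the derived $\LL^1$ full group as a Lévy group in the sense of Gromov--Milman, and by Pestov's theorem such groups are whirly amenable. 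This would yield (v).

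The main obstacle is the careful calibration of the Lévy property against the $\LL^1$ metric: one must select a hyperfinite exhaustion of $\mathcal R$ whose fiberwise diameters are integrable \emph{and} whose class cardinalities grow fast enough that Maurey's exponentially decaying concentration bounds dominate the $\LL^1$-diameter growth once one averages over the base. Fortunately, one has total freedom in choosing the exhaustion: by first passing to a subexhaustion in which each $\mathcal R_n$-class contains arbitrarily many $\mathcal R_{n-1}$-classes, the cardinalities grow as fast as desired while diameter integrability is preserved, and a standard Borel--Cantelli style argument should then deliver the uniform concentration needed to conclude.
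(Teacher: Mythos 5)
Your global strategy---the cycle of implications with all the substance in (i)\,$\Rightarrow$\,(v), a Connes--Feldman--Weiss hyperfinite exhaustion, a dense increasing union of subgroups attached to the finite sub-relations, and a L\'evy/concentration argument for those subgroups---is essentially the paper's. The easy links ((v)\,$\Rightarrow$\,(iv)\,$\Rightarrow$\,(iii), (iii)\,$\Rightarrow$\,(ii) via the abelian topological abelianization, and the reverse direction via the non-amenability result of the first paper in this series) are handled identically.

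The gap sits exactly where you flag "the main obstacle". You keep an exhaustion whose fiberwise diameters are merely \emph{integrable}, and then need a concentration inequality for finite symmetric or alternating groups in a Hamming-type metric \emph{weighted by unbounded diameters}. That is not Maurey's theorem, and "a standard Borel--Cantelli style argument should then deliver the uniform concentration" is not a proof: the difficulty is precisely that on the small set of points lying in large-diameter classes the $\LL^1$ displacement can be arbitrarily large, and no growth condition on the class cardinalities by itself controls this contribution to the $\LL^1$ metric. The paper avoids the issue entirely by applying Borel--Cantelli once, at the level of the equivalence relations: it replaces each $\mathcal R_n$ by a sub-relation $\mathcal S_n$ whose classes have \emph{uniformly bounded} $\Phi$-diameter while the $\mathcal S_n$ still exhaust $\mathcal R_\Phi$. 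On $[\mathcal S_n]$ the $\LL^1$ metric and the uniform metric are then bi-Lipschitz equivalent (Lemma \ref{lem: bounded classes}), so one can quote directly the known fact that full groups of finite equivalence relations, being products of groups $\LL^0(Y,\nu,\mathfrak S_k)$, are whirly amenable (L\'evy) for the \emph{uniform} topology (Glasner; Giordano--Pestov); no new concentration estimate is needed. Two further inaccuracies: the density of the union of these subgroups in $[\Phi]_1'$ does not follow from the density of alternating topological full groups proved in Section \ref{sec: dense topo full} (that statement concerns continuous graphings on the Cantor space, not finite sub-relations); it is a separate lemma, proved by approximating the topological generators $I_{\varphi,A}$ by their restrictions to $\mathcal R_n$-invariant sets. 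And the groups you call $\mathrm{Alt}(\mathcal R_n)$ are not compact: $\LL^0(Y,\nu,\mathfrak S_k)$ over a nonatomic base is a noncompact Polish group. Your argument can be repaired by adopting the bounded-diameter truncation, at which point the weighted concentration machinery becomes unnecessary.
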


Note that the whole $\LL^1$ full group of any free ergodic $\Z$-action factors onto $\Z$ \cite[Corollary 4.20]{lemaitremeasurableanaloguesmall2018} and hence cannot be extremely amenable. So we cannot have a stronger notion of amenability for the whole $\LL^1$ full group in the above equivalences.\\

We also note that $\LL^1$ full groups provide a rich playground for Rosendal's extension of geometric group theory to Polish groups: the $\LL^1$ metric can often canonically be obtained (up to quasi-isometry) purely from the topological group structure. We refer the reader to Section \ref{sec: geometry} for precise statements, and single out the following application, proved in Section \ref{sec: automatic QI}.

\begin{thmi}\label{thmi: auto QI}
	Let $\Gamma$ and $\Lambda$ be two finitely generated amenable groups acting in a measure-preserving and ergodic manner on $(X,\mu)$. Then every abstract group isomorphism between their $\LL^1$ full groups must be a quasi-isometry for their respective $\LL^1$ metrics. 
\end{thmi}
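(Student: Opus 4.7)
The plan is to combine Rosendal's coarse geometric framework for Polish groups (set up for $\LL^1$ full groups in Section \ref{sec: geometry}) with an automatic continuity argument tailored to the amenable setting.

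First, I would use the results of Section \ref{sec: geometry} to conclude that the $\LL^1$ full group of any ergodic measure-preserving action of a finitely generated amenable group has a canonical quasi-isometry type, realized intrinsically by its $\LL^1$ metric. In Rosendal's terminology, one must show that the $\LL^1$ metric is coarsely proper and generates the left-coarse structure of the group; amenability of the acting group is presumably crucial here in order to control the whole $\LL^1$ full group and not merely its derived subgroup.

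Next, I would establish an automatic continuity statement: any abstract group isomorphism $\phi \colon [\alpha(\Gamma)]_1 \to [\beta(\Lambda)]_1$ is continuous. This is the main obstacle. The natural route is a Steinhaus-type argument, showing that for every countable symmetric exhaustion of $[\alpha(\Gamma)]_1$ some member covers a neighborhood of the identity after a bounded number of products. For amenable ergodic actions this should follow from the density and extreme amenability of the derived $\LL^1$ full group (Theorem \ref{thmi: amenable characterization}), combined with the density of Nekrashevych's alternating topological full groups used earlier in the paper and classical Steinhaus-type arguments in the spirit of Kittrell and Tsankov for full groups of ergodic equivalence relations.

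Finally, once automatic continuity is secured, both $\phi$ and $\phi^{-1}$ are continuous, so $\phi$ is a Polish group homeomorphism. Coarsely bounded sets depend only on the Polish topology, so $\phi$ preserves coarse boundedness in both directions; by the first step this amounts to preserving $\LL^1$-boundedness in both directions. Standard arguments in Rosendal's framework then upgrade this bornologous bijection to a quasi-isometry for the $\LL^1$ metrics. I expect the bulk of the difficulty to lie in Step 2: amenability of the acting groups must be leveraged to exclude wild abstract isomorphisms, and the extremely amenable derived subgroup should be the decisive ingredient in the Steinhaus-type argument.
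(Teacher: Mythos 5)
Your outline has a genuine gap at its foundation: Steps 1 and 3 require that the $\LL^1$ metric on the \emph{whole} $\LL^1$ full group of an amenable ergodic graphing be maximal (equivalently, coarsely proper and large-scale geodesic), so that the quasi-isometry type of the group is canonically realized by $d^1_\Phi$ and any topological isomorphism is automatically a quasi-isometry. But this is precisely the open conjecture stated in Section \ref{sec: geometry}: maximality of the $\LL^1$ metric is only established for the \emph{derived} $\LL^1$ full group of an amenable graphing (Corollary \ref{cor: l1 is maximal on derived of amenable}) and for the whole $\LL^1$ full group of an ergodic $\Z$-action. For a general amenable $\Gamma$ the whole group is a nontrivial extension of the derived subgroup by its topological abelianization, which is not understood, and nothing in Section \ref{sec: geometry} controls the geometry of that extension. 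So "preserving coarse boundedness amounts to preserving $\LL^1$-boundedness" is exactly the unproved statement, and your argument does not close.

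The paper gets around this in two ways you should compare with your plan. First, for continuity it does not run a Steinhaus-type argument at all: by the reconstruction theorem \cite[Corollary 3.19]{lemaitremeasurableanaloguesmall2018}, every abstract isomorphism $\rho$ between the $\LL^1$ full groups is the conjugation by some $S\in\Aut(X,\mu)$; this makes $\rho$ a $d_u$-isometry, hence Borel for the $\LL^1$ topologies, and Pettis' lemma upgrades it to a topological isomorphism. (Your proposed Steinhaus route via extreme amenability of the derived subgroup is not carried out anywhere and would be a separate, unestablished result.) Second --- and this is where the spatiality is really used --- to pass from the derived subgroup, where maximality \emph{is} known, to the whole group, the paper decomposes an arbitrary $T$ into restrictions of three involutions $U_1,U_2,U_3$ with $d^1_\Phi(\id_X,U_i)\leq 2\, d^1_\Phi(\id_X,T)$; since involutions lie in the derived $\LL^1$ full group and $\rho$ is conjugation by $S$, the image $\rho(T)$ is the corresponding patchwork of the $\rho(U_i)$ restricted to $S$-translates of the pieces, which yields $d^1_\Psi(\id_X,\rho(T))\leq 6C\, d^1_\Phi(\id_X,T)+3C$. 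Without this hands-on reduction (or a proof of the conjecture on maximality), knowing that $\rho$ is a homeomorphism does not by itself give the quasi-isometry estimate on the full group.
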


The conclusion of the theorem is somehow optimal: if we replace quasi-isometry by bilipschitz equivalence, the theorem fails (see Example \ref{ex: not bilip iso}). We don't know how to generalize Theorem \ref{thmi: auto QI} to the nonamenable setup, but we conjecture that the following stronger statement holds.  

\begin{conji}
	Let $\Phi$ be a graphing. Then the $\LL^1$ metric on the $\LL^1$ full group of $\Phi$ is maximal. 
\end{conji}

Moreover, the following natural problem arises.

\begin{prob}
	Classify (derived) $\LL^1$ full groups up to quasi-isometry.
\end{prob}

A simpler instance of the above problem is to decide whether the derived $\LL^1$ full groups of the $2$-odometer and of the $3$-odometer are quasi-isometric, which has a natural reformulation in terms of quasi-isometry of the dyadic and triadic symmetric groups for an $\LL^1$-like metric (see the paragraph following Question \ref{qu: quasi-isom 2 3 odometer} for details).\\

Finally, we make progress on the computation of the exact topological rank of $\LL^1$ full groups of measure-preserving ergodic transformations. As reminded before, we had shown in \cite{lemaitremeasurableanaloguesmall2018} that the topological rank of $\LL^1$ full groups of a measure-preserving ergodic transformations is finite if and only if the transformation has finite entropy. Using Marks' work \cite{marksTopologicalGeneratorsFull2016}, we moreover had obtained that the topological rank of the $\LL^1$ full group of irrational rotations is equal to two. 

Here, we extend this result to the class of \emph{rank one} transformations, which we will properly define in section \ref{sec: rank one}. Let us for now mention that every compact transformation is rank one (in particular every irrational rotation is rank one), and that every rank one transformation has entropy zero.

\begin{thmi}
	Let $T$ be a rank-one measure-preserving transformation. Then the topological rank of its $\LL^1$ full group is equal to $2$.
\end{thmi}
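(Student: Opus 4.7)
Since a Polish group topologically generated by a single element is necessarily abelian (being the closure of a cyclic subgroup) while $[T]_1$ contains non-commuting involutions (for instance, the two involutions swapping adjacent pairs of levels in any Rokhlin tower of height $3$, which exists by aperiodicity), the topological rank is at least $2$. The work thus lies entirely in the upper bound, for which my plan is to exhibit an explicit pair of topological generators.

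I take $T$ itself as one generator and a single involution $U$ as the other. The reduction is as follows: by point (2) of the introduction the topological abelianization of $[T]_1$ is isomorphic to $\Z$ via the index map, and $T$ projects to a generator of $\Z$, so $\overline{\langle T,U\rangle}$ equals $[T]_1$ as soon as it contains the derived $\LL^1$ full group. By Theorem~\ref{thm: topo gen by I_phi} the latter is topologically generated by the involutions $I_\varphi$, so the task further reduces to approximating every such $I_\varphi$ by words in $T$ and $U$. The construction of $U$ uses the defining feature of rank-one transformations: there is a sequence of Rokhlin towers $\tau_n$ of heights $h_n$ and bases $B_n$ whose columns fill $X$ in measure, such that every measurable set is approximable by a union of levels of some $\tau_n$ and $\tau_{n+1}$ refines $\tau_n$ up to a small error. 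I define $U$ as a disjoint union of involutions $U_n$, each a prescribed ``elementary transposition'' of two levels of $\tau_n$ supported on a sub-column of small measure, thinned out along a subsequence so that $\sum_n h_n\, \mu(\supp U_n) < +\infty$; this places $U$ inside $[T]_1$.

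Density of $\overline{\langle T,U\rangle}$ in the derived $\LL^1$ full group is then established by a scale-by-scale approximation: given $I_\varphi$ and $\eps>0$, choose $n$ large enough that $\varphi$ is $\eps$-approximated by a level-permutation of $\tau_n$; the conjugates $T^i U T^{-i}$ furnish transpositions of distinct level-pairs, and judicious products reproduce any such level-permutation up to the prescribed $\LL^1$-error. The principal obstacle is to orchestrate the action of the single involution $U$ simultaneously at all scales, so that $U_n$ provides enough independent transpositions after conjugation to handle every $I_\varphi$ whose tower-level description lives at scale $n$, while keeping $U$ summable enough to remain in $[T]_1$. It is precisely the nested-refinement property characteristic of rank-one towers that makes this simultaneous multi-scale control feasible with a single additional generator.
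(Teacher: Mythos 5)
Your lower bound and your reduction are both fine and match the paper: $[T]_1$ is non-abelian so its topological rank is at least $2$, and since the index map realizes $[T]_1/[T]_1'\cong\Z$ with $I(T)=1$, it suffices to produce one more element $U$ with $[T]_1'\subseteq\overline{\la T,U\ra}$. The gap is exactly at the point you flag as ``the principal obstacle'' and then wave away: with $U$ an \emph{involution} written as a disjoint product $U=\prod_n U_n$ of pieces living at the different scales, there is no mechanism to recover the single piece $U_n$ (or anything close to it) inside $\overline{\la T,U\ra}$. Powers of an involution are trivial, and every word $w(T,U)$ acts at \emph{all} scales simultaneously with the same word: writing $w(T,U)=\prod_m w_m$ where $w_m$ is the corresponding word in the $T$-conjugates of $U_m$, you need $w_n$ to approximate the target level-permutation of the $n$-th tower while $\prod_{m\neq n}w_m$ is near the identity. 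The fine scales $m>n$ can be handled by thinning, but for the coarse scales $m<n$ each conjugate of $U_m$ has a \emph{fixed} positive $\LL^1$ norm (its support must consist of full levels of the $m$-th tower, since otherwise $T$-conjugates of $U_m$ would be confined to a sub-column of small total measure and could never approximate an arbitrary $I_{T,A}$ at scale $m$ --- this also shows your ``sub-column of small measure'' normalization is self-defeating), so the word length $L_n$ needed at scale $n$ forces an error of order $L_n\cdot d^1_T(U_m,\id_X)$ at each coarser scale, which does not go to zero. Nested refinement of the towers does not repair this; you need an algebraic way to switch off the other scales.

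This is precisely what the paper's construction supplies, and why its second generator is \emph{not} an involution: it is $V=\prod_n V_n$ where $V_n$ is (a disjointified copy of) the cycle $\rho_{A_{k_n}}\bigl((0\;1\;\cdots\;p_n-1)\bigr)$ of \emph{prime} order $p_n$, with the $p_n$ pairwise distinct. Raising $V$ to the exponent $P_mt_m$ with $P_m=\prod_{i<m,\,i\neq n}p_i$ and $P_mt_m\equiv 1\pmod{p_n}$ annihilates all coarse scales $i<m$, $i \neq n$, fixes $V_n$, and leaves a tail that tends to $0$ in $\LL^1$ thanks to the quantitative choice $k_n\geq 4^{n2^n+2^n}$; hence $V_n\in\overline{\la V\ra}$ for every $n$. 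Only then do Lemma \ref{lem: generating alternating group} (conjugating the cycle by powers of $T$ generates $\mathfrak A_{A_{k_n},T}$ up to a controlled error) and Proposition \ref{prop: rank one alternating is dense in derived} (density of $\bigcup_n\mathfrak A_{A_n,T}$ in $[T]_1'$) finish the argument. To repair your proof you would either have to replace your involution by such a multi-order element, or supply a genuinely new scale-separation device; as written, the construction of $U$ and the ``judicious products'' step do not constitute a proof.
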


It would be very interesting to have an example of a measure-preserving ergodic transformation such that the topological rank of its $\LL^1$ full group is finite, but greater than $2$. \\

The paper ends with further remarks and questions related to these results. We introduce the notion of $\LL^p$-full orbit equivalence and ask whether it is equivalent to $\LL^p$ orbit equivalence, we mention a natural extension of Theorem \ref{thmi: finite topo rank} to \emph{symmetric} $\LL^p$ full groups. We finally make a few more remarks on the topological abelianization of $\LL^1$ full groups.

\begin{ackn}
	I am very grateful to Alessandro Carderi, Nicolás Matte Bon and Todor 
	Tsankov for many enriching conversations around the topics of the present 
	paper. I would also like to thank Sébastien Gouëzel for allowing me to 
	include Theorem \ref{thm: not linf equivalent} in this paper, and the 
	anonymous 
	referee for their helpful remarks, especially for suggesting that the 
	answer to Question 5.5 from the first version of the paper should be 
	negative.\\
	\indent Research supported by Projet ANR-14-CE25-0004 GAMME and Projet 
	ANR-17-CE40-0026 AGRUME.

\end{ackn}

\tableofcontents

\section{Preliminaries}

In what follows $\Gamma$ and $\Lambda$ will always denote countable \emph{infinite} groups. 

\subsection{Standard probability spaces}

Recall that a measurable space is \textbf{standard} if its $\sigma$-algebra comes from the Borel $\sigma$-algebra induced by the topology of some Polish space.  
Measurable maps between standard Borel spaces are simply called Borel maps. One of the highlights of standard Borel spaces is the fact that the range of any injective Borel map  is Borel so that Borel bijections automatically have a Borel inverse \cite[Theorem 15.1]{kechrisClassicaldescriptiveset1995}. Another important fact is that in any standard Borel space we have a countable family of Borel subsets $(C_n)$ which \textbf{separates points}: for every distinct $x,y\in X$ there is $n\in\N$ such that $x\in C_n \nLeftrightarrow y\in C_n$.

Given a countable group $\Gamma$, a $\Gamma$-action on $X$ is called Borel if the bijections induced by the elements of $\Gamma$ are Borel maps. 
Two Borel actions $\Gamma\act X$ and $\Gamma\act Y$ are \textbf{conjugate} if there is a Borel $\Gamma$-equivariant bijection between $X$ and $Y$.

The \textbf{support} of a Borel bijection $T:X\to X$ is the Borel set $\supp T:=\{x\in X: T(x)\neq x\}$.
Here are two useful lemmas on Borel bijections of standard Borel spaces which only rely on the existence of a countable family of Borel subsets which separates points.

\begin{lemm}[{see e.g. \cite[Proposition 
2.7]{lemaitremeasurableanaloguesmall2018}}]
Let $T$ be a Borel bijection of a standard Borel space $X$. Then there is a partition of $\supp T$ in three Borel subsets  $A_1, A_2, A_3$ such that for all $i\in \left\{1,2,3\right\}$ the set $T(A_i)$ disjoint from $A_i$. 
\end{lemm}
%

By applying the above lemma $n$ times we have.

\begin{lemm} \label{lem: partitioning intersection of supports}
Let $X$ be a standard Borel space, let $(T_i)_{i=1}^n$ be a finite family of Borel bijections of $X$. 
Then there is a finite Borel partition $(A_j)_{j=1}^m$ of $\bigcap_{i=1}^n\supp T_i$ such that for each $i\in \left\{1,...,n\right\}$ and each $j\in \left\{1,...,m\right\}$ we have that $T_i(A_j)$ is disjoint from $A_j$. \qed
\end{lemm}
 Recall that given a measurable map $f:X\to Y$ between measurable spaces, if $\mu$ is a probability measure on $X$ then one can define the \textbf{pushforward measure} $f_*\mu$ on $Y$ by $f_*\mu(A)=\mu(f\inv(A))$ for all every $A\subseteq Y$.

We will  work most of the time in  a \textbf{standard probability space} $(X,\mu)$, which means that $X$ is still a standard Borel space but now equipped with a nonatomic Borel probability measure $\mu$. Given two standard probability spaces $(X, \mu)$ and $(Y,\nu)$, there is a Borel bijection $S: X\to Y$ such that $S_*\mu=\nu$ \cite[Theorem 17.41]{kechrisClassicaldescriptiveset1995}, so we may as well work in a fixed standard probability space $(X,\mu)$.

\subsection{The measure algebra and its automorphisms}\label{sec: meas alg and 
auto}

Let $(X,\mu)$ be a standard probability space, we denote by $\MAlg(X,\mu)$ the 
quotient of the algebra $\mathcal B(X)$ of Borel subsets of $X$ by the ideal of 
measure zero sets; in other words in the measure algebra we identify two Borel 
sets $A$ and $B$ as soon as $\mu(A\bigtriangleup B)=0$. Unless specified 
otherwise, we will always be working in the measure algebra when considering 
Borel sets, so for instance when we write $A\subseteq B$ we mean 
$\mu(A\setminus B)=0$. 

A \textbf{measure-preserving transformation} of $(X,\mu)$ is a Borel bijection $T:X\to X$ such that $T_*\mu=\mu$. 
We denote by $\Aut(X,\mu)$ be the group of all measure-preserving transformations, two such transformations being identified if they coincide on a full measure subset of $X$. We say that a measure-preserving transformation is \textbf{periodic} if all its orbits are finite.

 Every element of $\Aut(X,\mu)$ defines a measure-preserving automorphism of the measure algebra, and conversely every automorphism of the measure algebra comes from a measure-preserving transformation which is unique up to measure zero \cite[Theorem 15.11]{kechrisClassicaldescriptiveset1995}. In particular, since every measure-preserving Borel bijection between full measure Borel subsets of $(X,\mu)$ defines an automorphism of its measure algebra, it also defines a measure-preserving transformation. This will be used implicitly in many proofs. 
 
 Given $T\in\Aut(X,\mu)$ and $A\subseteq X$ of positive measure, we define the $T$-\textbf{return time} to $A$ as the function $n_{T,A}:X\to\N\setminus\{0\}$ given by: for all $x\in A$, $n_{A,T}(x)$ is the smallest $n\in\N$ such that $T^n(x)\in A$. The Poincaré recurrence theorem ensures us that the return time is well defined up to measure zero, also called the \textbf{return time} to $A$. We can then define the measure-preserving transformation $T_A$ \textbf{induced} by $T$ on $A$ by: 
 \[
 T_A(x)=\left\{\begin{array}{cc}T^{n_{A,T}}(x) & \text{if }x\in A \\x & \text{else.}\end{array}\right.
 \]

The  measure algebra $\MAlg(X,\mu)$ can be equipped with a complete metric $d_\mu$ defined by $d_\mu(A,B)=\mu(A\bigtriangleup B)$.
The following lemma which was used implicitly in \cite{lemaitremeasurableanaloguesmall2018}.

\begin{lemm}\label{lem: separating and density}
	Let $(X,\mu)$ be a standard probability space, let $\mathcal D$ be a set of Borel subsets of $X$ such that the image of $\mathcal D$ in $\MAlg(X,\mu)$ is dense. Then there is a full measure Borel subset $X'\subseteq X$ such that $\mathcal D$ separates the points of $X'$.
\end{lemm} 
\begin{proof}
	Let $(C_n)_{n\in\N}$ separate the points of $X$, by changing our enumeration we may as well assume each $C_n$ appears infinitely often. Now for each $n\in\N$ we pick $D_n\in\mathcal D$ such that $\mu(C_n\bigtriangleup D_n)<2^{-n}$. By the Borel-Cantelli lemma, there is a full measure Borel subset $X'\subseteq X$ such that for all $x\in X'$, there is $N\in\N$ such that for all $n\geq N$, $x\not\in C_n\bigtriangleup D_n$. By construction the set $X'$ is as wanted. 
\end{proof}

\subsection{Ergodic theory}

An action of a countable group $\Gamma$ on a standard probability space $(X,\mu)$ is \textbf{measure-preserving} if it is a Borel action and for every $\gamma\in \Gamma$ we have $\gamma_*\mu=\mu$. Every such action yields a homomorphism $\Gamma\to \Aut(X,\mu)$ which only depends on the action up to measure zero and conversely, every such homomorphism comes from a measure-preserving action which is unique up to measure zero (see \cite[Theorem 2.15]{glasnerErgodicTheoryJoinings2003}). 

Two measure-preserving actions of $\Gamma$ are \textbf{conjugate} if they are conjugate as Borel actions up to restricting to full measure $\Gamma$-invariant Borel subsets. Viewing actions as homomorphisms $\alpha,\beta: \Gamma\to\Aut(X,\mu)$, this is equivalent to the existence of $T\in\Aut(X,\mu)$ such that for all $\gamma\in\Gamma$ we have $\beta(\gamma)=T\alpha(\gamma)T\inv$.
The $\LL^1$ full groups that we are interested in only depend on the action up to measure zero, so we can and will forget about what happens on measure zero sets most of the time. By doing so we will implicitly use the following : if we have a Borel set $A$ of full measure, then it contains the full measure $\Gamma$-invariant set $\bigcap_{\gamma\in \Gamma}\gamma A$.

Two basic invariants of conjugacy of measure preserving actions are \textbf{aperiodicity} (the $\Gamma$-orbit of almost every $x\in X$ is infinite) and the stronger notion of \textbf{ergodicity} (every $\Gamma$-invariant Borel subset has measure $0$ or $1$). We will only be interested in aperiodic actions, and sometimes we will restrict to ergodic ones. Another important invariant is \textbf{freeness}, which means that for almost all $x\in X$ and all $\gamma\in \Gamma$ we have $\gamma x\neq x$. Note that aperiodicity follows from freeness since all the groups we consider are infinite.

\begin{exam}Let $I$ be a countable set and let $\nu$ be a probability measure 
on $I$ which is not a Dirac measure, then the measure preserving action 
$\Gamma\act (I^\Gamma,\nu^{\otimes \Gamma})$ given by $(\gamma \cdot 
x)_g=x_{\gamma\inv g}$ is free and ergodic. Such an action is called a 
\textbf{Bernoulli shift}.
\end{exam}

\subsection{Subshifts and Rokhlin entropy} \label{sec: def Rokhlin entropy}

Let $X$ be a standard Borel space, let $\Gamma\act X$ be a Borel action. An \textbf{observable} on $X$ is a Borel map $\varphi: X\to I$ where $I$ is a countable set. To every such observable we can associate a Borel partition of $X$ indexed by $I$ defined as $(\varphi\inv\{i\})_{i\in I}$, and conversely if $\mathcal P$ is a countable partition of $X$ then the map $\varphi: X\to\mathcal P$ which sends every $x\in X$ to the unique $P\in\mathcal P$ such that $x\in P$ is an observable. 
An observable $\varphi$  is called  (dynamically) \textbf{generating} if for all $x,y\in X$ with $x\neq y$ there is $\gamma\in \Gamma$ such that $\varphi(\gamma x)\neq\varphi(\gamma y)$. 

A Borel action $\Gamma\act X$ on a standard Borel space is called a \textbf{subshift} if it admits a generating observable with \emph{finite} range. 
A measure-preserving $\Gamma$-action on a standard probability space is called a \textbf{measure-preserving subshift} if there is a full measure Borel $\Gamma$-invariant set $Y$ such that the restriction of the action to $Y$ is a subshift. Note that Bernoulli shifts over finite spaces are examples of measure-preserving subshifts.

Finally, an action by homeomorphisms on a compact Hausdorff space is called a \textbf{topological subshift} if it has a \emph{continuous} generating observable (such an observable must then take values in a finite set by compactness and continuity).

We have the following well-known characterization of subshifts up to conjugacy (in the respective Borel  and topological categories). 

\begin{prop}\label{prop: chara Borel and topo subshifts}
The following hold. 
\begin{enumerate}
\item A Borel action $\Gamma\act X$ is a subshift if and only if it is conjugate to a restriction of $\Gamma\act I^\Gamma$ to an invariant Borel subset where $I$ is a finite set.
\item A $\Gamma$-action by homeomorphisms on a compact zero-dimensional topological space $K$ is a topological subshift if and only if it is conjugate (via a homeomorphism) to the restriction of  $\Gamma\act I^\Gamma$  to an invariant closed subset, where $I$ is a finite set. 
\end{enumerate}
\end{prop}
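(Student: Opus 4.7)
The plan is to use the classical equivariant coding argument: given a generating observable $\varphi\colon X\to I$ with $I$ finite, define $\Phi\colon X\to I^\Gamma$ by $\Phi(x)(\gamma):=\varphi(\gamma\inv x)$. A direct computation shows that $\Phi$ intertwines the $\Gamma$-action on $X$ with the shift action on $I^\Gamma$, and the generating condition on $\varphi$ translates precisely into injectivity of $\Phi$. Conversely, for every $\Gamma$-invariant subset $Y\subseteq I^\Gamma$, the evaluation-at-identity map $\varphi_0\colon Y\to I$ given by $\varphi_0(x):=x(e)$ is a generating observable: if $x\neq y$ in $Y$ differ at $\gamma$, then $\varphi_0(\gamma\inv x)=x(\gamma)\neq y(\gamma)=\varphi_0(\gamma\inv y)$. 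Since $\varphi_0$ is a coordinate projection, it is continuous (for the topological shift) and Borel, which settles the easy direction of both items at once.

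For the forward direction of item (1), I would verify that $\Phi$ is Borel coordinate-wise: each map $x\mapsto\varphi(\gamma\inv x)$ is the composition of the Borel action with the Borel observable $\varphi$, and Borel-ness into the product space $I^\Gamma$ is equivalent to Borel-ness of every coordinate, since cylinders generate the product $\sigma$-algebra. Combined with injectivity, the standard fact recalled in the preliminaries, that the range of an injective Borel map between standard Borel spaces is Borel and that such a map is a Borel isomorphism onto its image, promotes $\Phi$ to a Borel $\Gamma$-equivariant isomorphism between $X$ and the $\Gamma$-invariant Borel subset $\Phi(X)\subseteq I^\Gamma$.

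For the forward direction of item (2), the same map $\Phi$ is continuous: each coordinate is continuous by the same composition argument, and coordinate-wise continuity is equivalent to continuity into the product. Compactness of $K$ and Hausdorffness of $I^\Gamma$ then automatically make the continuous injection $\Phi$ a closed embedding, realising $K$ as $\Gamma$-equivariantly homeomorphic to the closed invariant subset $\Phi(K)\subseteq I^\Gamma$ (and the finiteness of $I$ is forced by compactness of $K$ together with continuity of $\varphi$). I do not anticipate any serious obstacle; the only subtle point is invoking the standard-Borel image theorem in item (1) to turn the injective Borel equivariant map into a genuine Borel conjugacy.
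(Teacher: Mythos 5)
Your proposal is correct and follows exactly the paper's argument: the equivariant coding map $x\mapsto(\varphi(\gamma\inv x))_{\gamma\in\Gamma}$, injectivity from the generating property, the standard-Borel image theorem for item (1), and the automatic closed-embedding property of a continuous injection from a compact space for item (2). You simply spell out the details (including the easy converse via evaluation at the identity) that the paper leaves implicit.
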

\begin{proof}
For (1), let $\varphi: X\to I$ be a generating observable where $I$ is finite, and consider the $\Gamma$-equivariant map $\Phi_I:X\to I^\Gamma$ given by $x\mapsto(\varphi_I(\gamma\inv x))_{\gamma\in \Gamma}$. Since $\varphi$ is generating and Borel, $\Phi_I$ is an injective Borel map. Its range is thus a Borel $\Gamma$-invariant subset of $I^\Gamma$. 

For (2), consider the same map and observe that it now has to be a homeomorphism onto its image.
\end{proof}

If we are now given a measure-preserving $\Gamma$-action on $(X,\mu)$, we say that an observable $\varphi$ is \textbf{ generating} if we can find a Borel $\Gamma$-invariant subset $X'\subseteq X$ of full measure such that the restriction of $\varphi$ to $X'$ is generating for $\Gamma\act X'$. The following analogue of Proposition \ref{prop: chara Borel and topo subshifts} is immediate.

\begin{prop}
	A measure-preserving $\Gamma$-action on $(X,\mu)$ is a measure-preserving subshift if and only if it admits a generating observable whose range is finite. \qed
\end{prop}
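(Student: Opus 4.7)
The plan is to unwind the definitions; the statement is essentially a direct translation of the Borel subshift definition into the measure-preserving category, which is exactly why the author calls it immediate.

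For the forward direction, assume $\Gamma\act(X,\mu)$ is a measure-preserving subshift. By definition there is a $\Gamma$-invariant full measure Borel subset $Y\subseteq X$ such that the Borel action $\Gamma\act Y$ is a subshift, so it admits a generating Borel observable $\varphi_0: Y\to I$ with $I$ finite. Pick any $i_0\in I$ and define $\varphi:X\to I$ by $\varphi(x)=\varphi_0(x)$ for $x\in Y$ and $\varphi(x)=i_0$ otherwise. Since $Y$ is Borel, $\varphi$ is Borel, and its range is still contained in the finite set $I$. Its restriction to the full measure $\Gamma$-invariant set $Y$ coincides with $\varphi_0$, which is generating for $\Gamma\act Y$, so by the definition preceding the proposition, $\varphi$ is a generating observable for the measure-preserving action.

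For the backward direction, suppose $\varphi:X\to I$ is a generating observable with $I$ finite. By definition there exists a Borel $\Gamma$-invariant subset $X'\subseteq X$ of full measure such that $\varphi|_{X'}$ is generating for the Borel action $\Gamma\act X'$, meaning that for every distinct $x,y\in X'$ there is $\gamma\in\Gamma$ with $\varphi(\gamma x)\neq\varphi(\gamma y)$. Since the range is finite, this directly exhibits $\Gamma\act X'$ as a Borel subshift, and therefore $\Gamma\act(X,\mu)$ is a measure-preserving subshift by definition.

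There is no genuine obstacle; the only minor point to be careful about is that in the forward direction the extension of $\varphi_0$ must be by a value taken in $I$ itself (not by adding a new symbol) in order to preserve the finiteness of the range. This is harmless as long as $I$ is nonempty, which we may assume since otherwise $Y$ and hence $X$ would be empty.
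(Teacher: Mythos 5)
Your proof is correct and is exactly the definitional unwinding the paper has in mind (the paper marks this proposition as immediate and gives no argument). The one caveat you raise in the forward direction is not even necessary: extending $\varphi_0$ by a new symbol would still yield a finite range, so either choice works.
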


Let $(X,\mu)$ be standard probability space. 
The \textbf{entropy} of an observable $\varphi: X\to I$ is the non-negative real number $H(\varphi)$ defined by the equation 
\begin{align*}
H(\varphi)&=-\sum_{i\in I}\mu \left(\varphi\inv (\left\{i\right\})\right) \ln\mu \left(\varphi\inv(\left\{i\right\})\right),
\end{align*}
where we make the convention that $0\ln(0)=0$.
Here are two fundamental properties of entropy:
\begin{itemize}
\item Subadditivity: for any two observables $\varphi$ and $\psi$ we have 
\[H(\varphi\times\psi)\leq \H(\varphi)+H(\psi)\]
 where $(\varphi,\psi)$ is the observable $x\mapsto (\varphi(x),\psi(x))$. 
 \item Concavity: if we fix $k\in\N$, the observables taking value in $\{1,...,k\}$ which maximize entropy are exactly those which give the same probability to every $i\in\{1,...,k\}$ (and their entropy is thus $\ln k$)
\end{itemize}

\begin{defi}[Seward]
	The \textbf{Rokhlin entropy} of a measure-preserving aperiodic $\Gamma$-action is the infimum of the entropies of its generating observables. 
\end{defi}

Every aperiodic Borel action admits a generating observable by \cite[Theorem 5.4]{jacksonCountableBorelEquivalence2002}, so the above quantity is an element of $[0,+\infty]$. The following remarkable result of Seward is a far-reaching generalization of Krieger's generator theorem which will allow us to work with arbitrary countable groups rather than $\Z$. 

\begin{theo}[{Seward \cite{sewardErgodicActionsCountable2015}}]\label{thm: 
Seward Krieger}
Let $\Gamma$ be a countable group, let $\Gamma\act(X,\mu)$ be a measure-preserving action. The action is a measure-preserving subshift if and only if it has finite Rokhlin entropy.
\end{theo}

Seward also has a quantitative version of the above result, yielding a 
generating observable whose range is as small as possible (see 
\cite{sewardKriegerfinitegenerator2019}, where he also defines Rokhlin entropy).

\subsection{Full groups}

If $(X,\mu)$ is a standard probability space, and $A,B$ are Borel subsets of $X$, a \textbf{partial isomorphism} of $(X,\mu)$ of \textbf{domain} $A$ and \textbf{range} $B$ is a Borel bijection $f: A\to B$ which is measure-preserving for the measures induced by $\mu$ on $A$ and $B$ respectively. We denote by $\dom f=A$ its domain, and by $\rng f=B$ its range. Given two partial isomorphisms $\varphi_1: A\to B$ and $\varphi_2: C\to D$, we define their composition $\varphi_2\circ \varphi_1$ as the map $\varphi_1\inv(B\cap C)\to \varphi_2(B\cap C)$ given by $\varphi_2\circ\varphi_1(x)=\varphi_1\varphi_2(x)$. We also define the \textbf{inverse} $\varphi\inv$ of a partial isomorphism $\varphi: A\to B$ as the unique map $\varphi\inv: B\to A$ such that $\varphi\inv\circ \varphi=\mathrm{id}_A$.

By definition a \textbf{graphing} is a countable set of partial isomorphisms. Every graphing $\Phi$ \textbf{generates} a \textbf{measure-preserving equivalence relation} $\mathcal R_\Phi$, defined to be the smallest equivalence relation containing $(x,\varphi(x))$ for every $\varphi\in \Phi$ and $x\in\dom\varphi$. Given a graphing $\Phi$, a $\Phi$\textbf{-word} is a composition of finitely many elements of $\Phi$ or their inverses. Observe that $(x,y)\in\mathcal R_\Phi$ iff and only if there exists a $\Phi$-word $w$ such that $y=w(x)$. Moreover we have a natural path metric $d_\Phi$ on the $\mathcal R_\Phi$-classes: for $(x,y)\in\mathcal R_\Phi$ we let $d_\Phi(x,y)$ be the minimum length of a $\Phi$-word $w$ such that $y=w(x)$.

A graphing is called \textbf{aperiodic} when the $\mathcal R_\Phi$-class of $\mu$-almost every $x\in X$ is  infinite.
Given a Borel set $A$, we say that $A$ is $\mathcal R_\Phi$-invariant if whenever $x\in A$ and $(x,y)\in\mathcal R_\Phi$, we have $y\in A$. Equivalently, this means that for all $x\in A$ and all $\varphi\in\Phi$, we have $\varphi(x)\in A$ and $\varphi\inv(x)\in A$. A graphing $\Phi$ is \textbf{ergodic} if every $\mathcal R_\Phi$-invariant Borel set has measure $0$ or $1$. Every ergodic graphing is automatically aperiodic.

The \textbf{full group} of a measure-preserving equivalence relation $\mathcal 
R$ is the group $[\mathcal R]$ of automorphisms of $(X,\mu)$ which induce a 
permutation on every $\mathcal R$-class, that is
$$[\mathcal R]=\{\varphi\in\Aut(X,\mu): \forall x\in X, \varphi(x)\,\mathcal R\, x\}.$$
It is a separable group when equipped with the complete metric $d_u$ defined by $$d_u(T,U)=\mu(\{x\in X: T(x)\neq U(x)\}.$$ The metric $d_u$ is called the \textbf{uniform metric} and the topology it induces is called the uniform topology. One also defines the \textbf{pseudo full group} of $\mathcal R$, denoted by $[[\mathcal R]]$, which consists of all partial isomorphisms $\varphi$ such that $\varphi(x)\, \mathcal R \, x$ for all $x\in \dom\varphi$. 

\subsection{\texorpdfstring{$\LL^1$}{L1} full groups}

\begin{defi}[{\cite{lemaitremeasurableanaloguesmall2018}}]
Let $\Phi$ be a graphing on a standard probability space $(X,\mu)$. Its \textbf{$\LL^1$ full group} $[\Phi]_1$ is the group of all $T\in[\mathcal R_\Phi]$ such that 
\[ \int_Xd_\Phi(x,T(x))d\mu(x)<+\infty. \]
\end{defi}

The $\LL^1$ full group of $\Phi$ is a Polish group for the topology induced by the right-invariant complete metric $d^1_\Phi$ defined by 
$$d^1_\Phi(T,U)=\int_X d_\Phi(T(x),U(x))d\mu(x).$$
When $\Gamma$ is a finitely generated group acting on a standard probability space $(X,\mu)$ by measure-preserving transformations, we can also define the associated $\LL^1$ full group as follows: let $S$ be a finite generating set of the group $\Gamma$, then $S$ is a graphing whose associated equivalence relation is the equivalence relation $\mathcal R_\Gamma$ whose equivalence classes are the $\Gamma$-orbits. 
Observe that up to bilipschitz equivalence, the associated metric $d_S$ on the $\Gamma$-orbits does not depend on $S$, so that two finite generating sets of $\Gamma$ yield the same $\LL¹$ full group with the same topology. This is by definition the $\LL^1$ full group of the $\Gamma$-action, which we  will often simply denote by $[\Gamma]_1$ without referring explicitly to the action. 

Here is another way to think of the $\LL^1$ full group of a $\Gamma$-action, similar to what was done in \cite[Section 3.2]{CarderiMorePolishfull2016}. Consider the Polish space $\LL^1(X,\mu,\Gamma)$ of integrable maps $X\to\Gamma$, i.e. maps $f:X\to\Gamma$ such that $\int_X d_S(1,f(x))d\mu(x)<+\infty$. To each element $f$ of $\LL^1(X,\mu,\Gamma)$ we associate a map $T_f: X\to X$ defined by $T_f(x)=f(x)\cdot x$. The subspace
$$\widetilde{[\Gamma]_1}=\{f\in\LL^1(X,\mu,\Gamma): T_f\in [\mathcal R_\Gamma]\}$$
is closed, and we define on it a law $*$ which is compatible with the group law on $[\mathcal R_\Gamma]$ by letting
$f*g(x)=f(T_g(x))g(x)$. Then $(\widetilde{[\Gamma]_1},*)$ is a Polish group, and $[\Gamma]_1$ is topologically isomorphic to its quotient by the closed normal subgroup $\{f\in \widetilde{[\Gamma]_1}: T_f=\mathrm{id}_X\}$ via the map $f\mapsto T_f$. 

If we are given $T\in [\Gamma]_1$, a lift of $T$ to $\widetilde{[\Gamma]_1}$ is called a \textbf{cocycle} associated to $T$. When the $\Gamma$ action is free, $f\mapsto T_f$ is injective and so each $T\in[\Gamma]_1$ has a unique associated cocycle $c_T: X\to \Gamma$ given by the equation 
$$T(x)=c_T(x)\cdot x.$$

An important feature of cocycles associated to elements of $\LL^1$ full groups is that they have finite entropy. 

\begin{prop}[{Austin, see \cite[Lemma 2.1]{austinBehaviourEntropyBounded2016}}]\label{prop: L1 has finite entropy}
Let $\Gamma$ be a finitely generated group. Then every element of $\LL^1(X,\mu,\Gamma)$ has finite entropy. 
\end{prop}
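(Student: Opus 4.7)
The plan is to reduce the problem to a statement about $\N$-valued random variables via the word-length function. Fix a finite symmetric generating set $S$ of $\Gamma$ and let $C = |S|$ so that the sphere $\Sigma_n := \{\gamma \in \Gamma : d_S(1,\gamma) = n\}$ satisfies $|\Sigma_n| \leq (C+1)^n$. For $f \in \LL^1(X,\mu,\Gamma)$, set $p_\gamma := \mu(f^{-1}(\{\gamma\}))$ and $q_n := \mu(f^{-1}(\Sigma_n)) = \sum_{\gamma \in \Sigma_n} p_\gamma$. Hypothesis gives $\sum_n n q_n = \int d_S(1,f(x))\, d\mu(x) < +\infty$.

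First, I would split the entropy sum shell by shell and use concavity of entropy inside each shell. Writing $H(f) = \sum_n \bigl(-\sum_{\gamma \in \Sigma_n} p_\gamma \ln p_\gamma\bigr)$ and decomposing $\ln p_\gamma = \ln(p_\gamma/q_n) + \ln q_n$, the inner sum becomes
\[
-\sum_{\gamma \in \Sigma_n} p_\gamma \ln(p_\gamma/q_n) \; - \; q_n \ln q_n \;\leq\; q_n \ln |\Sigma_n| - q_n \ln q_n \;\leq\; n q_n \ln(C+1) - q_n \ln q_n,
\]
where the concavity bound (maximum entropy on a finite set of size $|\Sigma_n|$ subject to total mass $q_n$) is used. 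Summing over $n$ yields
\[
H(f) \;\leq\; \ln(C+1) \cdot \int_X d_S(1,f(x))\, d\mu(x) \;+\; H(d_S(1,f(\cdot))),
\]
so it remains to show that an $\N$-valued observable with finite mean has finite entropy.

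For this last step, I would compare with the geometric distribution of matching mean. Setting $m := \sum_n n q_n$ and $p := m/(m+1)$, the geometric probabilities $g_n := (1-p)p^n$ are strictly positive with $\sum n g_n = m$. Non-negativity of the relative entropy $\sum_n q_n \ln(q_n/g_n) \geq 0$ gives
\[
H((q_n)_n) \;\leq\; -\sum_n q_n \ln g_n \;=\; -\ln(1-p) \;-\; m \ln p \;<\; +\infty.
\]
Combining the two bounds gives an explicit estimate of $H(f)$ in terms of $\int d_S(1,f)\,d\mu$ and $|S|$, proving the proposition.

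I don't anticipate a real obstacle: the main content is just the standard exponential-growth-versus-finite-first-moment heuristic, and the two ingredients (concavity of entropy and comparison with the geometric distribution) are both elementary. The only care needed is to keep the double summation over $\gamma$ and $n$ organized correctly and to justify the concavity step honestly by writing the maximum-entropy argument on each finite shell.
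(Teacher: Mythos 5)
Your proof is correct, and the paper itself gives no proof of this proposition — it simply cites Austin's Lemma 2.1, whose argument is exactly the one you reconstruct: bound the shell-by-shell entropy using the at-most-exponential growth of spheres in the Cayley graph, then reduce to the fact that an $\N$-valued observable with finite mean has finite entropy via comparison with the geometric distribution of the same mean. All the rearrangements are justified since every term in the double sum is non-negative (so Tonelli applies) and the comparison distribution has finite cross-entropy; the only degenerate case $m=0$ is trivial, so nothing is missing.
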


We finally note the following lemma about cocycles which are measurable with 
respect to an invariant sub-$\sigma$-algebra.

\begin{lemm}\label{lem: measurable cocycle}
	Let $\Gamma\act(X,\mu)$ be a measure-preserving action of a finitely 
	generated 
	group $\Gamma$, suppose $\mathcal A$ is a $\Gamma$-invariant sub-
	$\sigma$-algebra of the Borel subsets of $X$. Let $[\Gamma]_1^{\mathcal A}$ 
	denote the set consisting of all elements of 
	$[\Gamma]_1$ which admit an $\mathcal A$-measurable cocycle. Then 
	$[\Gamma]_1^{\mathcal A}$ is a group.
\end{lemm}
\begin{proof}
	Let $T,U\in [\Gamma]_1^{\mathcal A}$, and
	let $c_T$ and $c_U$ be cocycles for $T$ and $U$ which are $\mathcal 
	A$-measurable. Then the map $c:X\to \Gamma$ defined by 
	$$c(x)=c_T(U(x))c_U(x)$$
	is a cocycle for $TU$, and $c(x)=\gamma$ if and only if there are 
	$\gamma_1,\gamma_2\in\Gamma$ such that $\gamma_1\gamma_2=\gamma$, 
	$c_U(x)=\gamma_2$, $c_T(\gamma_2 x)=\gamma_1$. So $c$ is clearly $\mathcal 
	A$-measurable as wanted.
\end{proof}

\begin{rema}
	The same proof yields a similar statement for the whole full group of 
	the action.
\end{rema}

\subsection{Derived \texorpdfstring{$\LL^1$}{L1} full groups}

Contrarily to many Polish groups that have been studied, $\LL^1$ full groups are not topologically simple in general, and can actually happen to have a non trivial topological abelianization. The closure of their derived group (i.e. the group generated by commutators) is thus distinct from them a priori. 
Let us give it a shorter name.

\begin{defi}The \textbf{derived} $\LL^1$ \textbf{full group} of a graphing 
$\Phi$ is the closure of the derived group of the $\LL^1$ full group of $\Phi$. 
It is denoted by $[\Phi]_1'$. 
\end{defi}

Given $\varphi\in\Phi$ and a Borel set $A\subseteq X$, let us define an 
involution 
$I_{\varphi,A}$ by: 
$$I_{\varphi,A}(x)=\left\{\begin{array}{cc}\varphi(x) & \text{ if }x\in 
A\setminus\varphi(A) \\\varphi\inv(x) & \text{ if }x\in\varphi(A)\setminus A 
\\x & \text{ else.}\end{array}\right.$$
We can now give two fundamental properties of the derived $\LL^1$ full group. 
The second one suggests to rather call it the \emph{symmetric $\LL^1$ full 
group} (see Section \ref{sec: symmetric lp full group}).

\begin{theo}[{\cite[Lemma 3.11 and Theorem 
3.15]{lemaitremeasurableanaloguesmall2018}}]\label{thm: topo gen by I_phi}
	Let $\Phi$ be an aperiodic graphing. Then the following hold:\begin{enumerate}[(a)]
		\item every periodic element of the $\LL^1$ full group of $\Phi$ actually belongs to the derived $\LL^1$ full group of $\Phi$;
		\item the derived $\LL^1$ full group of $\Phi$ is topologically 
		generated by the set of involutions $\{I_{\varphi,A}:\varphi\in\Phi, 
		A\subseteq X\}$.
	\end{enumerate}
\end{theo}
Observe that since $I_{\varphi,A}$ depends continuously on $A\in\MAlg(X,\mu)$ 
and $\MAlg(X,\mu)$ is connected, we can deduce that $[\Phi]_1'$ is connected 
(cf. Corollary 3.14 in \cite{lemaitremeasurableanaloguesmall2018}). We still 
don't know whether it is simply connected. Another important question is to 
understand what is the topological abelianization of $\LL^1$ full groups, i.e. 
what is $[\Phi]_1/[\Phi]_1'$. The only known case is that of ergodic 
$\Z$-actions, for which we then always have $[\Z]_1/[\Z]_1'=\Z$ 
\cite{lemaitremeasurableanaloguesmall2018}. See the last section of the paper 
for more on this. 

Using Item (b) from the above statement along with its full group version 
\cite[Section 4]{kittrellTopologicalPropertiesFull2010}, we also 
have the following.

\begin{coro}\label{cor: density}
	Let $\Phi$ be an aperiodic graphing. Then the derived $\LL^1$ full group of 
	$\Phi$ is dense in the full group $[\mathcal R_\Phi]$ for the uniform 
	topology.
\end{coro}

Given an aperiodic graphing $\Phi$ and an $\mathcal R_\Phi$-invariant Borel set $A$, consider the subgroup $[\Phi_A]'_1$ consisting of all the elements of $[\Phi]'_1$ supported in $A$. It is a closed normal subgroup of $[\Phi]'_1$, and the following result says that these are the only ones.

\begin{theo}[{\cite[Theorem 3.24]{lemaitremeasurableanaloguesmall2018}}]
Let $\Phi$ be an aperiodic graphing. Then every closed normal subgroup of $[\Phi]_1'$ is of the form $[\Phi_A]'_1$, where $A$ is $\mathcal R_\Phi$-invariant. 
\end{theo}

As a corollary, one can show that a graphing is ergodic if and only if its 
derived $\LL^1$ full group is topologically simple \cite[Corollary 
3.25]{lemaitremeasurableanaloguesmall2018}.

\subsection{Conditional measures}

Given a measure preserving action $\Gamma\act(X,\mu)$ of a finitely generated group $\Gamma$, it will be important for us to know for which Borel sets $A$ and $B$ we can find an element of $T$ of the $\LL^1$ full group of the action such that $T(A)$ is close to $B$. Because the $\LL^1$ full group is dense in the full group $[\mathcal R_\Gamma]$, it suffices to understand the $[\mathcal R_\Gamma]$-orbits in the measure algebra. It is a well-known fact that when the action is ergodic, there is $T\in[\mathcal R_\Gamma]$ such that $T(A)=B$ if and only if $\mu(A)=\mu(B)$. In the non ergodic case, one needs to refine the measure $\mu$ to a \emph{conditional measure} with respect to the $\sigma$-algebra of $\Gamma$-invariant sets. 

\begin{defi}
	Let $\mathcal R$ be a measure-preserving equivalence relation. Denote by $M_{\mathcal R}$ the $\sigma$-algebra of $\mathcal R$-invariant Borel subsets. The \textbf{conditional measure} of $A\in\MAlg(X,\mu)$ is the $M_{\mathcal R}$-measurable function $\mu_{\mathcal R}$ defined by 
	$$\mu_{\mathcal R}(A)=\pi_{\mathcal R}(\chi_A),$$
	where $\pi_{\mathcal R}$ is orthogonal projection onto the closed subspace of $\LL^2(X,\mu)$ consisting of the $M_{\mathcal R}$-measurable functions. 
\end{defi}

Note that $\mu_{\mathcal R}$ is well-defined up to measure zero. We now have the following. 

\begin{prop}[{Dye, see \cite[Lemma 2.11]{lemaitremeasurableanaloguesmall2018}}]\label{prop: transitive on cond measure}Let $\mathcal R$ be a measure-preserving equivalence relation and $A,B\in\MAlg(X,\mu)$. Then $\mu_{\mathcal R}(A)=\mu_{\mathcal R}(B)$ if and only if there is an involution $T\in[\mathcal R]$ such that $T(A)=B$.
\end{prop}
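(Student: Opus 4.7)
The plan splits into two directions. For the forward implication, any $T\in[\mathcal R]$ preserves $\mu$ and maps $\mathcal R$-classes to $\mathcal R$-classes, hence preserves every $\mathcal R$-invariant Borel set. This means $T$ acts trivially on $M_{\mathcal R}$-measurable functions and so commutes with the projection $\pi_{\mathcal R}$, giving $\mu_{\mathcal R}(T(A))=\mu_{\mathcal R}(A)$; note that no involution assumption is needed in this direction.

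For the converse, I first reduce to the case $A\cap B=\emptyset$: replacing $(A,B)$ by $(A\setminus B,B\setminus A)$ preserves the conditional-measure equality, and any involution exchanging these residual parts while fixing $A\cap B$ yields what we want for the original sets. Assuming $A\cap B=\emptyset$, it then suffices to construct a partial isomorphism $\varphi\in[[\mathcal R]]$ with $\dom\varphi=A$ and $\rng\varphi=B$, for then the map defined by $T(x)=\varphi(x)$ on $A$, $T(x)=\varphi\inv(x)$ on $B$, and $T(x)=x$ elsewhere is the desired involution.

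To build $\varphi$ I would use a greedy exhaustion: by Feldman--Moore, pick a countable family $(T_n)_{n\in\N}\subseteq[\mathcal R]$ whose graphs cover $\mathcal R$, and inductively define $\varphi_n$ with domain $A_n\subseteq A$ and range $B_n\subseteq B$ by taking at step $n$ the largest $A'_n\subseteq A\setminus A_{n-1}$ with $T_n(A'_n)\subseteq B\setminus B_{n-1}$ and extending $\varphi_{n-1}$ by $T_n|_{A'_n}$. Set $\varphi=\bigcup_n\varphi_n$ with domain $A_\infty\subseteq A$ and range $B_\infty\subseteq B$.

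The crux is to show $\mu(A\setminus A_\infty)=0$. Let $\tilde\varphi_n\in[\mathcal R]$ be the involution agreeing with $\varphi_n$ on $A_n$, with $\varphi_n\inv$ on $B_n$, and the identity elsewhere. Applying the forward direction to $\tilde\varphi_n$ gives $\mu_{\mathcal R}(A_n)=\mu_{\mathcal R}(B_n)$, and passing to the limit yields $\mu_{\mathcal R}(A\setminus A_\infty)=\mu_{\mathcal R}(B\setminus B_\infty)$. On the other hand, by maximality at each step, $T_n(A\setminus A_\infty)$ is disjoint from $B\setminus B_\infty$ for every $n$, so no point of $A\setminus A_\infty$ is $\mathcal R$-related to any point of $B\setminus B_\infty$. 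Consequently their $\mathcal R$-saturations are disjoint Borel sets, and since $\mu_{\mathcal R}(C)$ is supported on the $\mathcal R$-saturation of $C$, the equality of conditional measures forces both to vanish, whence $\mu(A\setminus A_\infty)=\int\mu_{\mathcal R}(A\setminus A_\infty)\,d\mu=0$. The main obstacle is this last bridge between $\mu_{\mathcal R}$ and the orbit structure — carefully checking that $\mu_{\mathcal R}(C)$ vanishes outside the saturation of $C$, a fact which itself follows from $\pi_{\mathcal R}$ being conditional expectation onto $M_{\mathcal R}$-measurable functions.
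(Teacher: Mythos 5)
Your proof is correct, and it is essentially the standard argument for this classical fact of Dye: the paper itself gives no proof here (it only cites \cite[Lemma 2.11]{lemaitremeasurableanaloguesmall2018}), and the cited proof likewise proceeds by reducing to disjoint sets and exhausting $A$ against $B$ along a countable family generating $\mathcal R$, using that $\mu_{\mathcal R}(C)$ vanishes off the saturation of $C$ to see the leftover pieces are null. All the steps you flag as delicate (commutation of the Koopman operator with $\pi_{\mathcal R}$, maximality forcing $T_n(A\setminus A_\infty)\cap(B\setminus B_\infty)=\emptyset$, and the support property of the conditional measure) check out.
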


Aperiodicity has an important reformulation in terms of conditional measures, which is also due to Dye who calls it Maharam's lemma. For a proof in our setup, see \cite[Proposition 2.2]{lemaitrefullgroupsnonergodic2016}.

\begin{prop}[{Maharam's lemma}]\label{prop: maha}
	A measure-preserving equivalence relation $\mathcal R$ is aperiodic if and only if for any $A\in\MAlg(X,\mu)$, and for any $M_\mathcal R$-measurable function $f$ such that $0\leq f\leq \mathbb \mu_{\mathcal R}(A)$, there exists $B\subseteq A$ such that the $\mathcal R$-conditional measure of $B$ equals $f$.
\end{prop}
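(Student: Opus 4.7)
The plan is to prove both directions, with the ``aperiodic $\Rightarrow$ intermediate-value property'' direction carrying the content. For the necessity of aperiodicity, I argue by contrapositive. If $\mathcal R$ is not aperiodic, then the $\mathcal R$-invariant Borel set $P$ of finite-orbit points has positive measure; after restricting to an invariant subset where all orbits have a fixed cardinality $n$, a Borel transversal $A \subseteq P$ meeting each orbit exactly once satisfies $\mu_{\mathcal R}(A) = \frac{1}{n}\chi_P$, while $\mu_{\mathcal R}(B)$ takes values in $\{0, 1/n\}$ pointwise on $P$ for every $B \subseteq A$. The $M_{\mathcal R}$-measurable function $f = \frac{1}{2n}\chi_P$ then lies between $0$ and $\mu_{\mathcal R}(A)$ but cannot be realised as any $\mu_{\mathcal R}(B)$.

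For sufficiency, fix $A$ and $f$ with $0 \leq f \leq \mu_{\mathcal R}(A)$ and consider the family $\mathcal F = \{B \in \MAlg(X,\mu) : B \subseteq A \text{ and } \mu_{\mathcal R}(B) \leq f\}$, ordered by inclusion. Separability of $\MAlg(X,\mu)$ shows that any chain in $\mathcal F$ admits a countable increasing cofinal subchain $(B_n)$, and monotone convergence (applied to the orthogonal-projection characterisation of $\mu_{\mathcal R}$) gives $\mu_{\mathcal R}(\bigcup_n B_n) = \lim_n \mu_{\mathcal R}(B_n) \leq f$, so $\mathcal F$ is closed under such sups. Zorn's lemma therefore yields a maximal $B_0 \in \mathcal F$. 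If $\mu_{\mathcal R}(B_0) = f$ almost everywhere we are done; otherwise the $\mathcal R$-invariant set $E := \{\mu_{\mathcal R}(B_0) < f\}$ has positive measure and $\mu_{\mathcal R}(A\setminus B_0) \geq f - \mu_{\mathcal R}(B_0) > 0$ on $E$.

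To contradict maximality I need a \emph{smallness claim}: for aperiodic $\mathcal R$, any Borel $A'$, and any positive $M_{\mathcal R}$-measurable function $h$ on an $\mathcal R$-invariant set $E$ with $\mu_{\mathcal R}(A') > 0$ on $E$, there exists $C \subseteq A' \cap E$ with $0 < \mu_{\mathcal R}(C) \leq h$ on $E$. Applied to $A' = A\setminus B_0$ and $h = f - \mu_{\mathcal R}(B_0)$, this produces $C$ for which $B_0 \cup C \in \mathcal F$ strictly enlarges $B_0$, the desired contradiction. I would prove the claim by partitioning $E$ into countably many $M_{\mathcal R}$-pieces on each of which $h \geq 1/n$ for some integer $n$, and then using aperiodicity to split the intersection of $A'$ with each orbit into $n$ Borel classes of equal conditional measure $\frac{1}{n}\mu_{\mathcal R}(A')$, one of which then has conditional measure at most $h$. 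The main obstacle is precisely this $n$-fold orbit-splitting step, which is where aperiodicity enters in an essential way; I would establish it by applying marker-type Borel-selection lemmas for countable Borel equivalence relations to an enumeration of partial isomorphisms generating $\mathcal R$, noting that infinitude of almost every orbit is exactly what allows the $n$ markers to be found.
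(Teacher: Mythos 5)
The paper does not actually prove this proposition: it defers to Proposition 2.2 of the author's earlier article on full groups of non-ergodic equivalence relations. Your proposal follows what is essentially that standard route — the contrapositive via a transversal of a finite piece for necessity, and for sufficiency an exhaustion (Zorn) argument reduced to the existence of subsets of small positive conditional measure. The necessity direction and the Zorn step are both correct as written: on the invariant set where classes have exactly $n$ elements the conditional expectation is the orbit average, so $\mu_{\mathcal R}(B)\in\{0,1/n\}$ pointwise for $B$ contained in a transversal; and an arbitrary chain in your family $\mathcal F$ does admit a countable increasing cofinal subchain whose union is an upper bound lying in $\mathcal F$ by monotone convergence of conditional expectations.

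The one place where your argument is not yet a proof is the smallness claim, specifically the $n$-fold equal splitting of $A'$. First, beware of circularity: ``split $A'$ into $n$ Borel pieces of equal conditional measure'' is itself an instance of the lemma being proved, so it must be derived from something more primitive. The standard device is to construct a finite subequivalence relation $\mathcal S$ of the restriction of $\mathcal R$ to $A'$ all of whose classes have exactly $n$ elements; its $n$ levels then have $\mathcal R$-conditional measure $\frac 1n\mu_{\mathcal R}(A')$ by the tower property of conditional expectation, since $M_{\mathcal R}\subseteq M_{\mathcal S}$. Second, to build $\mathcal S$ you must find $n$ markers \emph{inside} $A'\cap[x]$, not merely inside $[x]$: aperiodicity of $\mathcal R$ gives infinite classes but says nothing a priori about the cardinality of $A'\cap[x]$. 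You therefore need the intermediate fact that $A'$ meets almost every class of its saturation infinitely often, equivalently that Borel partial transversals of an aperiodic p.m.p.\ equivalence relation are null (if $D$ were a positive-measure partial transversal with infinite classes, writing its saturation as a countable disjoint union of images $\varphi_k(D_k)$ with $D_k\subseteq D$ would force $\sum_k\mu(D_k)=\int_D\abs{[x]\cap[D]_{\mathcal R}}\,d\mu=+\infty$ while this sum is bounded by $1$). With that lemma the restriction of $\mathcal R$ to $A'$ is aperiodic on the relevant invariant set and the marker construction goes through; without it, your closing sentence that ``infinitude of almost every orbit is exactly what allows the $n$ markers to be found'' is not quite accurate, since the markers must be found in $A'$. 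Once this standard supplement is added, your proof is complete.
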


\subsection{Alternating topological full groups}\label{sec: topo full def}

Alternating topological full groups were defined by Nekrashevych and have been the source of outstanding new examples of finitely generated groups \cite{NekrashevychPalindromicsubshiftssimple2018}. They provide the right setup for generalizing Matui's results on finite generatedness of derived groups of topological full groups of homeomorphisms of the Cantor space \cite{Matuiremarkstopologicalfull2006}. Topological full groups were first studied and defined by Giordano, Putnam and Skau \cite{GiordanoFullgroupsCantor1999}. 
Here we give a restricted definition of (alternating) topological full groups which will be sufficient for our purposes.  

Let $2^\N$ be the Cantor space and let $\nu$ be a Borel probability measure on it. Recall from \cite{lemaitremeasurableanaloguesmall2018} that a \textbf{continuous graphing} on the measured Cantor space $(2^\N,\nu)$ is a graphing $\Phi$ such that for all $\varphi\in\Phi$, the sets $\dom \varphi$, $\rng\varphi$ are clopen and $\varphi: \dom\varphi\to \rng\varphi$ is a homeomorphism.

\begin{defi}Let $\Phi$ be a continuous graphing on $(2^\N,\nu)$. The 
\textbf{topological full group} of $\Phi$, denoted by $[\Phi]_c$, is the group 
of all homeomorphisms $T$ of $2^\N$ such that for all $x_0\in 2^\N$ there is a 
neighborhood $U$ of $x_0$ and a $\Phi$-word $w$ such that for all $x\in U$, 
$T(x)=w(x)$. 
\end{defi}

Say that an element $T$ of the topological full group of $\Phi$ is a \textbf{basic $3$-cycle} if there are a three clopen subsets $U_1, U_2, U_3$ which partition the support of $T$ and two $\Phi$-words $w_1,w_2$ such that $w_1(U_1)=U_2$ and $w_2(U_2)=U_3$ and for all $x\in X$ we have 
\[
T(x)=\left\{ 
\begin{array}{cl}
w_1(x)&\text{ if }x\in U_1\\
w_2(x)&\text{ if }x\in U_2\\
w_1\inv w_2\inv(x)&\text{ if }x\in U_3\\
x&\text{ otherwise.}
\end{array}
\right.
\]
The \textbf{alternating topological full group} of a continuous graphing $\Phi$ is the group $\mathfrak A[\Phi]$ generated by basic $3$-cycles. One can check that the alternating topological full group is a normal subgroup contained in the derived group of the topological full group. 

Let us observe that when $\Gamma$ is a countable group acting by homeomorphisms on the Cantor space, then if $S$ is any generating set for $\Gamma$, the elements of $S$ define a continuous graphing whose topological full group is the topological full group of the groupoid of germs of the $\Gamma$-action in the sense of \cite[Example 2.2]{nekrashevychSimplegroupsdynamical2017} (see also \cite[Definition 2.6]{nekrashevychSimplegroupsdynamical2017} and the two paragraphs which follow it). Moreover, the alternating topological full group of the continuous graphing $S$ is equal to the alternating topological full group of the groupoid of germs of the $\Gamma$-action.

\begin{rema}
	Since we do not require the action to be free or essentially free in the topological sense, the etale groupoid of germs of the action is not Hausdorff in general, in particular its unit space may not be closed, although it is compact. As a consequence, Lemma 2.2 from \cite{matuiTopologicalFullGroups2013} does not apply and elements of the topological full group may very well have a non clopen support\footnote{In the topological setup, the \textbf{support} of a homeomorphism $T:X\to X$ is rather defined as the \emph{closure} as the set of $x\in X$ such that $T(x)\neq x$.}, as opposed to elements of the alternating topological full group.
\end{rema}

We have the following straightforward consequence of \cite[Theorem 1.2]{nekrashevychSimplegroupsdynamical2017} and \cite[Proposition 5.7]{nekrashevychSimplegroupsdynamical2017} which will be crucial in our proof of Theorem \ref{thmi: finite topo rank}.

\begin{theo}[{Nekrashevych}]\label{thm: Nekrashevych expansive vs finite rank}
	Let $\Gamma$ be a finitely generated group, suppose $\Gamma$ acts on the Cantor space by homeomorphism and that every $\Gamma$-orbit has cardinality at least $5$. Then the action is a topological subshift if and only if its alternating topological full group is finitely generated.
\end{theo}

\section{Density of the alternating topological full group}\label{sec: dense topo full}

In this section, we show that the alternating topological full group of a 
continuous graphing is always dense in its derived $\LL^1$ full group. We first 
need to work a bit on the general setup, so $(X,\mu)$ is still a standard 
probability space.

A measure-preserving transformation is called a \textbf{$3$-cycle} when it has order $3$, or equivalently when all its nontrivial orbits have cardinality $3$. Given a $3$-cycle $T$, there is $A\subseteq X$ such that $\supp T=A\sqcup T(A)\sqcup T^2(A)$, and the identity $(1\quad 2\quad 3)=(1\quad 2)(1\quad 3)(1\quad 2)(1\quad 3)$ yields that $T$ is a commutator, so the $3$-cycles of the $\LL^1$ full group actually belong to the derived $\LL^1$ full group.  

We need to ensure that there are many $3$-cycles. This is a consequence of the 
following two lemmas, which are straightforward generalizations of \cite[Lemma 
3.6]{lemaitremeasurableanaloguesmall2018} and \cite[Lemma 
2.12]{lemaitremeasurableanaloguesmall2018} respectively. 

Recall the following notation: given a measure-preserving transformation $T$ 
and $A\subseteq X$, we denote by $T_A$ the transformation induced by $T$ on $A$ 
(see Section \ref{sec: meas alg and auto}). In what follows, we 
only use this construction when $A$ is $T$-invariant, and then $T_A$ is the map 
which acts as $T$ on $A$ and trivially outside of $A$.

\begin{lemm}\label{lem: manyperiodic}
Let $\Phi$ be a graphing. Then for every periodic $U\in[\mathcal R_\Phi]$, there exists an increasing sequence of $U$-invariant sets $A_n\subseteq \supp U$ such that $\supp U=\bigcup_{n\in\N} A_n$ and for all $n\in\N$, $U_{A_n}\in[\Phi]_1$.
\end{lemm}
\begin{proof}
Let $U\in[\mathcal R_\Phi]$ be periodic and for all $n\in\N$, let $A_n=\{x\in \supp U: d_\Phi(x,U^k(x))<n\text{ for all }k>0\}$. By definition each $A_n$ is $U$-invariant, and since $U$ is periodic, $\bigcup_{n\in\N} A_n=\supp U$, so $\mu(\supp U\setminus A_n)\to 0$. By the definition of $A_n$, each $U_{A_n}$ belongs to $[\Phi]_1$.
\end{proof}

\begin{lemm}\label{lem: many3cycles}
Let $\Phi$ be an aperiodic graphing. Then for every Borel $A\subseteq X$, there 
is a $3$-cycle $U\in [\mathcal R_\Phi]$ whose support is equal to $A$. 
\end{lemm}
\begin{proof}
By Maharam's Lemma (Proposition \ref{prop: maha}), one can write $A=A_1\sqcup A_2\sqcup A_3$ where $\mu_{\mathcal R_\Phi}(A_1)=\mu_{\mathcal R_\Phi}(A_2)=\mu_{\mathcal R_\Phi}(A_3)=\mu_{\mathcal R_\Phi}(A)/3$. By a result of Dye (see Proposition \ref{prop: transitive on cond measure}), there are $T_1,T_2\in [\mathcal R_\Phi]$ such that $T_1(A_1)=A_2$ and $T_2(A_2)=A_3$). We then define $T\in[\mathcal R_\Phi]$ by
\[
T(x)=
\left\{\begin{array}{cl}
T_1(x) & \text{if }x\in A_1 \\
T_2(x) & \text{if }x\in A_2 \\
T_1\inv T_2\inv(x) & \text{if }x\in A_3 \\
x & \text{otherwise.}
\end{array}\right.
\]
The transformation $T$ is the desired $3$-cycle. 
\end{proof}
As an immediate consequence of the two previous lemmas, we have:

\begin{prop}\label{prop: manythreecycles}
Let $\Phi$ be an aperiodic graphing. Then for every Borel $A\subseteq X$, there 
is a sequence of $3$-cycles $U_n\in [\Phi]_1$ such that $(\supp U_n)$ is an 
increasing sequence of subsets of $A$ and $\mu(A\setminus \supp U_n)\to 0$. \qed
\end{prop}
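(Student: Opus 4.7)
The plan is to combine the two preceding lemmas essentially without any additional work. First I would apply Lemma \ref{lem: many3cycles} to produce a single $3$-cycle $U \in [\mathcal R_\Phi]$ with $\supp U = A$. This $U$ is not a priori in the $\LL^1$ full group, but it is certainly periodic (of order $3$), so Lemma \ref{lem: manyperiodic} applies to it.

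Applying that lemma yields an increasing sequence of $U$-invariant Borel sets $A_n \subseteq \supp U = A$ with $\bigcup_n A_n = A$ and $U_{A_n} \in [\Phi]_1$ for every $n$. I then set $U_n := U_{A_n}$ and claim these are the desired $3$-cycles. The only point to verify is that each $U_n$ is itself a $3$-cycle with support $A_n$. But since $A_n$ is $U$-invariant, the $U$-return time to $A_n$ is identically $1$ on $A_n$, so $U_{A_n}$ agrees with $U$ on $A_n$ and is the identity on $X \setminus A_n$. Because $U^3 = \id$, we get $U_n^3 = \id$ and $\supp U_n = A_n$, as required.

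Finally, the sequence $(\supp U_n) = (A_n)$ is increasing by construction, its union is $A$, and hence $\mu(A \setminus \supp U_n) \to 0$ by continuity of the measure. There is no serious obstacle here; the proposition is merely a packaging of the previous two lemmas, the only substantive verification being that inducing a $3$-cycle on an invariant set preserves the property of being a $3$-cycle.
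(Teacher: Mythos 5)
Your proof is correct and is exactly the argument the paper intends: the proposition is stated there as an immediate consequence of Lemmas \ref{lem: many3cycles} and \ref{lem: manyperiodic}, and your verification that inducing on a $U$-invariant subset of the support yields a $3$-cycle supported on that subset is the right (and only) detail to check.
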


\begin{prop}\label{prop: derived generated by 3-cycles}	
Let $\Phi$ be a graphing. Then $[\Phi]_1'$ is topologically generated by $3$-cycles. 
\end{prop}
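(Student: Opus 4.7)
My plan is to combine Theorem~\ref{thm: topo gen by I_phi} with a concrete factorization of each involution $I_{\varphi, A}$ into two 3-cycles. By that theorem, $[\Phi]_1'$ is the closure of the subgroup generated by the involutions $I_{\varphi, A}$, so it suffices to show that each such involution is an $\LL^1$-limit of products of 3-cycles lying in $[\Phi]_1$.

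Fix $I := I_{\varphi, A}$ and write its support as $B \sqcup \varphi(B)$ with $B \cap \varphi(B) = \emptyset$. I would use Maharam's lemma (Proposition~\ref{prop: maha}) to split $B = B_1 \sqcup B_2$ with $\mu_{\mathcal R_\Phi}(B_1) = \mu_{\mathcal R_\Phi}(B_2)$, and then invoke Dye's theorem (Proposition~\ref{prop: transitive on cond measure}) to obtain an involution $\tau \in [\mathcal R_\Phi]$ with $\tau(B_1) = B_2$. Mimicking the symmetric-group identity $(a\,b)(c\,d) = (a\,c\,b)(a\,c\,d)$, I define two 3-cycles $U_1, U_2 \in [\mathcal R_\Phi]$: the cycle $U_1$ has support $B_1 \sqcup B_2 \sqcup \varphi(B_1)$ and acts by $x \mapsto \tau(x) \mapsto \varphi(x) \mapsto x$ for $x \in B_1$, while $U_2$ has support $B_1 \sqcup B_2 \sqcup \varphi(B_2)$ and acts by $x \mapsto \tau(x) \mapsto \varphi(\tau(x)) \mapsto x$ for $x \in B_1$. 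A case-by-case check on $B_1, B_2, \varphi(B_1), \varphi(B_2)$ gives $U_1 U_2 = I$.

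The 3-cycles $U_i$ usually do not lie in $[\Phi]_1$ because $\tau$ need not. To get around this, I would apply Lemma~\ref{lem: manyperiodic} to each $U_i$, obtaining increasing sequences $A_{i,n}$ of $U_i$-invariant Borel subsets of $\supp U_i$ whose union is $\supp U_i$ and such that $V_{i,n} := (U_i)_{A_{i,n}}$ belongs to $[\Phi]_1$; each $V_{i,n}$ remains a 3-cycle, since the restriction of a 3-cycle to an invariant set is still a 3-cycle.

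The hard step will be verifying $V_{1,n} V_{2,n} \to I$ in the $\LL^1$ topology. Each $V_{i,n}$ is the identity outside $A_{i,n} \subseteq \supp U_i$, so the product $V_{1,n} V_{2,n}$ coincides with $U_1 U_2 = I$ on a subset of $\supp I$ whose complement has measure tending to $0$; on that complement, both $I$ and the approximant differ by at most one $\varphi$-step, so the pointwise cocycle discrepancy is bounded by $1$. This will give $d^1_\Phi(V_{1,n} V_{2,n}, I) \to 0$. The main bookkeeping burden is to select the invariant sets $A_{i,n}$ compatibly, so that they intersect the four pieces $B_1, B_2, \varphi(B_1), \varphi(B_2)$ in a way matching both $U_1$- and $U_2$-orbits; this should be achievable by intersecting the sequences produced by Lemma~\ref{lem: manyperiodic} and saturating with respect to $\tau$ and $\varphi$ so that the four regions are cut coherently.
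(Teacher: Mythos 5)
Your argument for the aperiodic case is correct and takes a genuinely different, more hands-on route than the paper. The paper's proof is soft: it observes that the closure $N$ of the group generated by $3$-cycles is a closed normal subgroup of $[\Phi]_1'$, invokes the classification of closed normal subgroups of $[\Phi]_1'$ (every such subgroup is of the form $[\Phi_A]_1'$ for an invariant set $A$), and then uses Proposition \ref{prop: manythreecycles} to see that the union of the supports of $3$-cycles is all of $X$, whence $N=[\Phi]_1'$. You instead factor each generating involution $I_{\varphi,A}$ explicitly as a product of two $3$-cycles via the identity $(a\,b)(c\,d)=(a\,c\,b)(a\,c\,d)$, pairing the transposition over $B_1$ with the one over $B_2=\tau(B_1)$; the four points $x,\varphi(x),\tau(x),\varphi(\tau(x))$ are indeed a.e.\ distinct since $B\cap\varphi(B)=\emptyset$, so $U_1U_2=I$ checks out. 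The truncation step also goes through, but your appeal to Lemma \ref{lem: manyperiodic} followed by ad hoc intersection is more delicate than necessary: the cleanest choice is to cut on $B_1$ by $C_n=\{x\in B_1: d_\Phi(x,\tau(x))\leq n\}$ and take $A_{1,n}=C_n\sqcup\tau(C_n)\sqcup\varphi(C_n)$, $A_{2,n}=C_n\sqcup\tau(C_n)\sqcup\varphi(\tau(C_n))$. With this coherent choice both truncations are the identity off the good set, so $d^1_\Phi(V_{1,n}V_{2,n},I)=2\mu(B\setminus(C_n\sqcup\tau(C_n)))\to 0$. Note that the coherence is genuinely needed: if $x$ lies in $A_{1,n}$ but not $A_{2,n}$, then $V_{1,n}V_{2,n}(x)=\tau(x)$, which can be arbitrarily far from $I(x)=\varphi(x)$ in $d_\Phi$, so your claim that the discrepancy is ``bounded by one $\varphi$-step'' fails for an incompatible choice of invariant sets. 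Your advantage over the paper's route is that you avoid the normal-subgroup classification theorem entirely; the paper's advantage is brevity and that it adapts verbatim to the symmetric $\LL^p$ full groups discussed in Section \ref{sec: symmetric lp full group}.

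There is, however, a genuine gap: the proposition is stated for an arbitrary graphing, and every tool you use — Theorem \ref{thm: topo gen by I_phi}, Maharam's lemma (Proposition \ref{prop: maha}), and Dye's Proposition \ref{prop: transitive on cond measure} — is only available for \emph{aperiodic} graphings (Maharam's lemma is in fact equivalent to aperiodicity). Your proof therefore says nothing about the periodic part of $\Phi$. The paper first splits $X$ into its periodic and aperiodic parts, so that $[\Phi]_1'=[\Phi_p]_1'\times[\Phi_a]_1'$, and treats the periodic factor separately: it exhausts $X_p$ by the invariant sets $X_n$ of points whose class has $d_\Phi$-diameter at most $n$, notes that the corresponding subgroups $G_n$ are genuine full groups of finite equivalence relations, and uses the classical fact that a periodic commutator in a full group is a product of $3$-cycles. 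You need to either add this reduction or restrict your statement to aperiodic graphings.
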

\begin{proof}	
Let $X_p$ be the periodic part of the graphing, i.e. the set of $x$ whose $\mathcal R_\Phi$-class is finite, and let $X_a=X\setminus X_p$. Then if one lets $\Phi_p$
 (resp. $\Phi_a$) be the restriction of $\Phi$ to $X_p$ (resp. $X_a$), the derived $\LL^1$ full group of $\Phi$ splits naturally as a direct product $[\Phi_p]_1'\times [\Phi_a]_1'$. So it suffices to establish the theorem in the aperiodic and in the periodic case. 
 
Let us start by supposing $\Phi$ is aperiodic. The group generated by $3$-cycles is normal in $[\Phi]_1'$ so its closure $N$ is a closed normal subgroup of $[\Phi]_1'$. By \cite{lemaitremeasurableanaloguesmall2018} we conclude that $N$ is equal to the group of $T\in[\Phi]'_1$ whose support is contained in the reunion of the supports of the $3$-cycles. Using the previous proposition, we see that the reunion of such supports is equal to $X$, so $N=[\Phi]'_1$, which end the proof of the theorem in the aperiodic case.

Now suppose $\Phi$ is periodic. For every $n\in\N$ let $X_n$ be the set of $x\in X$ whose $\mathcal R_\Phi$-class has $d_\Phi$-diameter less than $n$. Then $X=\bigcup_{n\in\N} X_n$, the $X_n$'s are $\Phi$-invariant and non-decreasing so if we let $G_n=\{T\in [\Phi]_1: \supp T\subseteq X_n\}$ then the reunion of the $G_n$'s is dense in $[\Phi]_1$. It thus suffices to show that commutator of elements of $G_n$ is a product of $3$-cycles belonging to $G_n$, which comes from the fact that by boundedness of the orbits, each $G_n$ is a full group and every periodic element of a full group which is a commutator is a product of $3$-cycles. 
\end{proof}

\begin{prop}\label{prop: alternating dense in derived L1}
Let $\Phi$ be a continuous graphing on $(2^\N,\nu)$. Then its alternating topological full group is dense in its derived $\LL^1$ full group. 
\end{prop}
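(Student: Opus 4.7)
The plan is to invoke Proposition \ref{prop: derived generated by 3-cycles}, which reduces the question to approximating an arbitrary $3$-cycle $T \in [\Phi]_1$ in $d^1_\Phi$ by products of basic $3$-cycles with pairwise disjoint supports (any such product automatically lies in $\mathfrak A[\Phi]$).

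Fix a $3$-cycle $T$ with $\supp T = A \sqcup T(A) \sqcup T^2(A)$ and $\epsilon > 0$. Using the integrability of $x \mapsto d_\Phi(x, T(x))$ and the countability of the set of $\Phi$-words, I would partition $A$, up to a set of measure $<\epsilon$, into finitely many Borel pieces $A_1, \ldots, A_n$ for which there exist $\Phi$-words $w_i, v_i$ with $T|_{A_i} = w_i|_{A_i}$ and $T|_{w_i(A_i)} = v_i|_{w_i(A_i)}$; in particular $T^2|_{A_i} = v_iw_i|_{A_i}$. Each triple $(A_i, w_i(A_i), v_iw_i(A_i))$ is then the data of a ``Borel basic $3$-cycle'', and it remains to replace each $A_i$ by a sufficiently close clopen approximation.

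The core step is to choose, for each $i$, a clopen $U_i \subseteq \dom w_i \cap \dom(v_iw_i)$ close to $A_i$ in measure such that $U_i$, $w_i(U_i)$ and $v_iw_i(U_i)$ are pairwise disjoint. Starting from any clopen $V_i$ close to $A_i$ inside this domain, I would set
\[
U_i := V_i \setminus \bigl(w_i(V_i) \cup v_iw_i(V_i) \cup w_i^{-1}v_iw_i(V_i)\bigr).
\]
The first two subtractions yield $U_i \cap w_i(U_i) = \emptyset$ and $U_i \cap v_iw_i(U_i) = \emptyset$ at once, and the third yields $w_i(U_i) \cap v_iw_i(U_i) = \emptyset$: a common point would force some $a \in U_i$ to also lie in $w_i^{-1}v_iw_i(U_i) \subseteq w_i^{-1}v_iw_i(V_i)$. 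Each of the three subtracted sets intersects $V_i$ in small measure, because $A_i$, $w_i(A_i) = T(A_i)$ and $v_iw_i(A_i) = T^2(A_i)$ are pairwise disjoint pieces of $\supp T$; for the third estimate, $T(A_i) \cap T^2(A_i) = \emptyset$ implies $A_i \cap w_i^{-1}v_iw_i(A_i) = \emptyset$. I would then process the indices sequentially, further intersecting each $V_i$ with the clopen complements of $\bigcup_{j<i} S_j$ and of its $w_i^{-1}$- and $(v_iw_i)^{-1}$-preimages, to force the supports $S_i := U_i \sqcup w_i(U_i) \sqcup v_iw_i(U_i)$ to be pairwise disjoint across $i$ as well.

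Each $U_i$ then defines a basic $3$-cycle $T_i \in \mathfrak A[\Phi]$ via the data $(U_i, w_i, v_i)$, and since the $S_i$ are disjoint the product $T' := T_1 \cdots T_n$ also lies in $\mathfrak A[\Phi]$. As the word lengths $|w_i|, |v_i|$ are bounded by a uniform constant and the symmetric differences $\mu(A_i \triangle U_i)$ are small, $d^1_\Phi(T, T')$ will be bounded linearly in $\epsilon$. The hard part is finding the correct formula for $U_i$: subtracting only $w_i(V_i) \cup v_iw_i(V_i)$ fails to produce $w_i(U_i) \cap v_iw_i(U_i) = \emptyset$, and the extra term $w_i^{-1}v_iw_i(V_i)$ is exactly what closes this gap while keeping the measure loss under control.
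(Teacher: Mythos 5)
Your proof is correct and follows essentially the same route as the paper: reduce to $3$-cycles via Proposition \ref{prop: derived generated by 3-cycles}, cut the $3$-cycle into pieces on which it and its square are given by fixed $\Phi$-words, approximate each piece by a clopen set, and disjointify to obtain basic $3$-cycles. The only differences are cosmetic: the paper keeps the countable partition and concludes by dominated convergence where you truncate to a finite one, and your three-term subtraction is a slightly more careful version of the paper's disjointification step (which subtracts images under $T$ itself and gets the third disjointness from injectivity of $T$).
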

\begin{proof}
By the previous proposition, it suffices to show that every $3$-cycle in $[\Phi]_1$ can be approximated by an element of the alternating topological full group. So let $T$ be a $3$-cycle in the $\LL^1$ full group of $\Phi$. Let $A\subseteq X$ such that $\supp T=A\sqcup T(A)\sqcup T^2(A)$. 

Since the set of $\Phi$-words is countable, we have a partition $(A_n)$ of $A$ and $\Phi$-words $w_{1,n}, w_{2,n}$ such that for all $x\in X$ we have
\[
T(x)=
\left\{\begin{array}{cl}
w_{1,n}(x) & \text{if }x\in A_n \text{ for some }n\in\N \\
w_{2,n}(x) & \text{if }x\in T(A_n) \text{ for some }n\in\N \\
w_{1,n}\inv w_{2,n}\inv(x) & \text{if }x\in T^2(A_n) \text{ for some }n\in\N\\
x & \text{otherwise.}
\end{array}\right.
\]
In particular, for every $n\in\N$ the set $A_n\sqcup T(A_n)\sqcup T^2(A_n)$ is $T$-invariant, and if we let $T_n:=T_{A_n\sqcup T(A_n)\sqcup T^2(A_n)}$, then by the dominated convergence theorem we conclude $T=\lim_{m\to+\infty}\prod_{i=0}^m T_m$. So it suffices to prove that each $T_n$ is a limit of elements of the alternating topological full group. 

Let $n\in\N$, then there is a sequence of clopen sets $(U_i)_{i\in\N}$ such that $\mu(A_n\bigtriangleup U_i)\to 0\;[i\to+\infty]$. If we let $V_i=U_i\setminus (T(U_i)\cup T^2(U_i))$, then by continuity we also have $\mu(A_n\bigtriangleup V_i)\to 0$. We now define $T_{n,i}$ by 
\[
T_{n,i}(x)=
\left\{\begin{array}{cl}
w_{1,n}(x) & \text{if }x\in V_i \\
w_{2,n}(x) & \text{if }x\in T(V_i) \\
w_{1,n}\inv w_{2,n}\inv(x) & \text{if }x\in T^2(V_i) \\
x & \text{otherwise.}
\end{array}\right.
\]
Then $T_{n,i}$ is clearly a basic $3$-cycle, and hence belongs to the alternating topological full group of $\Phi$. Moreover by the dominated convergence theorem we have $T_{n,i}\to T_n\;[n\to+\infty]$, which finishes the proof. 
\end{proof}

\section{Topological rank of derived \texorpdfstring{$\LL^1$}{L1} full groups}\label{sec: proof of finite topo rank}

In this section, we prove that the derived $\LL^1$ full group of a measure-preserving $\Gamma$-action has finite topological rank if and only if it is a subshift (Theorem \ref{thmi: finite topo rank}). We want to use Nekrashevych's result on finite generatedness of alternating topological full groups, and the next section prepares the ground for this.

\subsection{From measure-preserving subshifts to topological subshifts}

In this preparatory section we will show that every aperiodic measure-preserving subshift is conjugate to a topological subshift on the Cantor space where each orbit has cardinality at least $5$. For this, we will freely use the characterization of subshifts provided by Proposition \ref{prop: chara Borel and topo subshifts}.

Given a group $\Gamma$ and a generating set $S$ for $\Gamma$, we equip $\Gamma$ with the graph metric induced by its Cayley graph with respect to $S$. The closed ball of radius $n$ around the identity for this metric is denoted by $B_S(e,n)$. 

\begin{lemm}
Let $\Gamma$ be a finitely generated group, suppose $S\subseteq\Gamma$ is a finite generating set and let $\Gamma\act X$ be a subshift with every orbit of cardinality at least $5$. Then we can find a finite generating Borel partition $\mathcal P$ such that for every $P\in\mathcal P$ there are $\gamma_1,...,\gamma_4\in B_S(e,4)$ such that $P, \gamma_1 P,...,\gamma_4 P$ are disjoint. 
\end{lemm}
\begin{proof}
Let us first show that for every $x\in X$ there are $\gamma_1,...,\gamma_4\in B_S(e,4)$ such that $x,\gamma_1 x,...,\gamma_4x$ are all distinct. Observe that if for some $n\in \N$ we have $\abs{B_S(e,n) x}=\abs{B_S(e,n+1)x}$ then $B_S(e,n)x$ is a finite set which is invariant under each generator of $\Gamma$, hence $\Gamma$-invariant. In particular we must have  $\abs{B_S(e,4) x}\geq 5$ so for every $x\in X$ there are $\gamma_1,...,\gamma_4\in B_S(e,4)$ such that $x,\gamma_1 x,...,\gamma_4x$ are all distinct.

Now apply lemma \ref{lem: partitioning intersection of supports} to the family $(\gamma\lambda\inv)_{\gamma,\lambda\in B_S(e,4)}$ to get a partition $\mathcal P$ such that for every $P\in\mathcal P$ there are $\gamma_1,...,\gamma_4\in B_S(e,4)$ such that $P, \gamma_1 P,...,\gamma_4 P$ are disjoint. The desired partition is obtained by taking the join of $\mathcal P$ with a finite generating partition. 
\end{proof}

\begin{prop}\label{prop: topo subshift with not too small orbits}
Let $\Gamma$ be a finitely generated group acting on $(X,\mu)$ by an aperiodic measure-preserving subshift. Then the action is conjugate to a topological subshift on the Cantor space where every orbit has cardinality at least $5$.
\end{prop}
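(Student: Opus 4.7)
The plan is to use the partition $\mathcal P$ provided by the previous lemma as the alphabet, apply the standard embedding into a full shift, and then restrict to the support of the pushforward measure to get a Cantor space.

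First, take the partition $\mathcal P$ given by the previous lemma, so that for each $P\in\mathcal P$ we have $\gamma_1^P,\dots,\gamma_4^P\in B_S(e,4)$ with $P,\gamma_1^P P,\dots,\gamma_4^P P$ pairwise disjoint. Define the $\Gamma$-equivariant Borel map $\Phi_{\mathcal P}:X\to\mathcal P^\Gamma$ by sending $x$ to the tuple whose $\gamma$-coordinate is the unique element of $\mathcal P$ containing $\gamma^{-1}x$. Since $\mathcal P$ is generating and $X$ admits a countable separating family of Borel sets, $\Phi_{\mathcal P}$ is injective on a $\Gamma$-invariant full measure subset $X'\subseteq X$.

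Next, let $\nu:=(\Phi_{\mathcal P})_*\mu$ and $Z:=\supp\nu\subseteq\mathcal P^\Gamma$. Then $Z$ is a closed $\Gamma$-invariant subset of $\mathcal P^\Gamma$ (using $\Gamma$-invariance of $\nu$), and $\nu$ has full support on it by construction. Since $\mu$ is nonatomic and $\Phi_{\mathcal P}$ is Borel injective on $X'$, $\nu$ is also nonatomic, so no point of $Z$ is isolated (an isolated point $z\in Z$ would be open in $Z$ and would satisfy $\nu(\{z\})>0$). Therefore $Z$ is a compact, zero-dimensional, metrizable space without isolated points, hence homeomorphic to the Cantor space. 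The restriction to $Z$ of the projection $(p_\gamma)_\gamma\mapsto p_e$ is a continuous generating observable, so the action $\Gamma\curvearrowright Z$ is a topological subshift in the sense of Proposition \ref{prop: chara Borel and topo subshifts}(2). The Borel bijection $\Phi_{\mathcal P}:X'\to\Phi_{\mathcal P}(X')\subseteq Z$ is $\Gamma$-equivariant and measure-preserving onto a conull subset of $(Z,\nu)$, so the original measure-preserving action is conjugate to $\Gamma\curvearrowright(Z,\nu)$.

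It remains to check the orbit size bound in $Z$, which is the subtle point. Fix $z\in Z$ and let $P:=z_e\in\mathcal P$; I claim that $z,\gamma_1^P z,\dots,\gamma_4^P z$ are pairwise distinct. If, say, $\gamma_i^P z=z$, then applying $e$-evaluation gives $z_{(\gamma_i^P)^{-1}}=P=z_e$. The cylinder where both the $e$-coordinate and $(\gamma_i^P)^{-1}$-coordinate equal $P$ is clopen and contains $z\in\supp\nu$, so some $\Phi_{\mathcal P}(x)$ with $x\in X'$ also lies in it; this says $x\in P$ and $\gamma_i^P x\in P$, contradicting $P\cap\gamma_i^P P=\varnothing$. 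The same cylinder argument rules out $(\gamma_j^P)^{-1}\gamma_i^P z=z$ for $i\neq j$, using disjointness of $\gamma_i^P P$ and $\gamma_j^P P$. This establishes that every orbit of $\Gamma\curvearrowright Z$ has at least $5$ points, completing the proof.

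The main obstacle is this last step: the orbit lower bound must be transferred from the dense Borel subset $\Phi_{\mathcal P}(X')$ to all of $Z$, which is exactly why the previous lemma was stated with the disjointness condition on $\mathcal P$ rather than just pointwise orbit-size control — the disjointness survives the passage to limits via the clopen cylinders in $\mathcal P^\Gamma$.
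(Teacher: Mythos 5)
Your proof is correct and follows essentially the same route as the paper: code the action via the partition from the previous lemma, pass to the support of the pushforward measure (the essential range), and invoke Brouwer's characterization of the Cantor set. The only cosmetic difference is that the paper first refines the alphabet to $\Omega=\bigvee_{\gamma\in B_S(e,4)}\gamma\mathcal P$ so that the orbit-size bound becomes a closed condition readable off each symbol sequence, whereas you keep the alphabet $\mathcal P$ and verify the bound at every point of the support directly via two-coordinate clopen cylinders of positive measure; both arguments correctly transfer the disjointness from the conull image to its closure.
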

\begin{proof}
Let $\mathcal P$ be as in the previous lemma and consider $\Omega=\bigvee_{\gamma\in B_S(e,4)}\gamma\mathcal P$. Observe that for $\mu$-almost every $x\in X$, there are $\gamma_1,...,\gamma_4\in B_S(e,4)$ such that $x$, $\gamma_1 x$,...,$\gamma_4 x$ all belong to distinct elements of $\Omega$. Let $\varphi_\Omega:X\to\Omega$ be the associated observable. 

Now consider the $\Gamma$-equivariant map $\Phi_\Omega:X\to \Omega^\Gamma$ given by $x\mapsto(\varphi_\Omega(\gamma\inv x))_{\gamma\in \Gamma}$. Observe that the set of sequences $(\omega_\gamma)_{\gamma\in\Gamma}$ such that for all $\gamma\in \Gamma$ there are $\gamma_1,...,\gamma_4\in B_S(e,4)$ such that $\omega_\gamma,\omega_{\gamma\gamma_1\inv},...,\omega_{\gamma\gamma_4\inv}$ are all distinct is a closed $\Gamma$-invariant set containing the essential range of $\Phi_\Omega$, so the $\Gamma$-action on the essential range of $\Phi_\Omega$ is a topological subshift all whose orbits have cardinality at least $5$. 

Moreover the measure $\mu$ has no atoms because the $\Gamma$-orbits are almost all infinite. So the essential range of $\Phi_\Omega$ contains no isolated points and thus must be homeomorphic to the Cantor space by a theorem of Brouwer (see \cite[Theorem 7.4]{kechrisClassicaldescriptiveset1995}).
We conclude that the $\Gamma$-action on the essential range of $\Phi_\Omega$ is a topological subshift on the Cantor space which has the desired properties. 
\end{proof}

\begin{rema}
	I do not know whether one can improve the above result so as to get an 
	aperiodic topological subshift.
\end{rema}
\subsection{Proof of Theorem \ref{thmi: finite topo rank}}

Let us recall the statement that we are aiming at.

\begin{theo}Let $\Gamma$ be a finitely generated group and consider a 
measure-preserving aperiodic $\Gamma$-action on a standard probability space 
$(X,\mu)$. Then the following are equivalent:
\begin{enumerate}[(i)]
\item \label{cond: finite entropy}the $\Gamma$-action has finite Rokhlin entropy and
\item \label{cond: finite topological rank}the derived $\LL^1$ full group of the $\Gamma$-action has finite topological rank.
\end{enumerate}
\end{theo}
\begin{proof}
Let us start by the direct implication \eqref{cond: finite entropy}$\impl$\eqref{cond: finite topological rank}. Suppose the measure-preserving $\Gamma$-action has finite Rokhlin entropy. By Seward's theorem (Theorem \ref{thm: Seward Krieger}), we may assume that the action is a measurable subshift, and by Proposition \ref{prop: topo subshift with not too small orbits} we can further assume that it is a topological subshift all whose orbits have cardinality at least $5$. Nekrashevych's result (Theorem \ref{thm: Nekrashevych expansive vs finite rank}) yields that the associated alternating topological full group is finitely generated. So the alternating topological full group is the desired dense finitely generated subgroup of the derived $\LL^1$ full group of the action by Proposition \ref{prop: alternating dense in derived L1}.\\

We now prove the reverse implication \eqref{cond: finite topological rank}$\impl$\eqref{cond: finite entropy}. Suppose that for some $n\in\N$ we have $T_1$,..., $T_n \in [\Gamma]'_1$ which generate a dense subgroup of $[\Gamma]'_1$. By definition for each $i\in \{1,...,n\}$ the transformation $T_i$ admits an associated cocycle $c_i\in \LL^1(X,\mu,\Gamma)$. By Lemma \ref{prop: L1 has finite entropy} each cocycle $c_i$ has finite entropy. Finally, let $c$ be the observable associated to a partition $(A_1,A_2)$ of $X$ into two disjoint sets such that for all $x\in X$, $\mu_{\mathcal R_\Gamma}(A_1)(x)=\mu_{\mathcal R_\Gamma}(A_2)(x)=\frac 12$, as provided by Maharam's lemma (cf. Proposition \ref{prop: maha}).

The observable $c_1\times\cdots\times c_n\times c$ has finite entropy by 
subadditivity, let us show that it is generating so that \eqref{cond: finite 
entropy} holds. Let $\tilde{\mathcal A}$ be the $\sigma$-algebra generated by 
the $\Gamma$-translates of the partition associated to the observable 
$c_1\times\cdots\times c_n\times c$, we will show that it separates the points 
of a full measure subset of $X$, thus finishing the proof that the action has 
finite Rokhlin entropy.

First, by Lemma \ref{lem: measurable cocycle}, every
element of the group generated by $T_1,...,T_n$ admits a cocycle which is 
$\tilde {\mathcal A}$-measurable. Furthermore, the following holds.

\begin{claim}The $\sigma$-algebra $\tilde {\mathcal A}$ is invariant under every element of the group generated by $T_1,...,T_n$.
\end{claim}
\begin{claimproof} 
We just saw that every element of the group generated by $T_1,...,T_n$ admits a 
cocycle which is $\tilde{\mathcal A}$-measurable. 
But such elements must preserve $\tilde {\mathcal A}$ because if $A\in\tilde{\mathcal A}$ and $U$ is an element of the full group of $\mathcal R_\Gamma$ admitting a cocycle which is $\tilde{\mathcal A}$-measurable, we find a partition $(A_\gamma)_{\gamma\in\Gamma}$ of $A$ such that each $A_\gamma$ belongs to $\tilde{\mathcal A}$ and for each $x\in A_\gamma$, $U(x)=\gamma x$.
 So $U(A)=\bigsqcup_{\gamma\in\Gamma} \gamma(A_\gamma)$ and since $\tilde{\mathcal A}$ is a $\Gamma$-invariant $\sigma$-algebra which contains each $A_\gamma$ we conclude that $U(A)\in\tilde{\mathcal A}$. 
\end{claimproof}
Let $\mathcal A$ be the closure of the image of $\tilde{\mathcal A}$ in 
$\MAlg(X,\mu)$\footnote{Actually $\tilde{\mathcal A}$ is automatically closed 
because $\tilde{ \mathcal A}$ is a $\sigma$-algebra, but we won't need that.}, 
which is then also invariant under every element of the group generated by 
$T_1,...,T_n$. By Lemma \ref{lem: separating and density}, in order to show 
that $\tilde{\mathcal A}$ separates the points of a full measure subset of 
$X$,  we only need to show that $\mathcal A=\MAlg(X,\mu)$.
We first prove that $\mathcal A$ satisfies the following property with respect to the $\mathcal R_\Gamma$-conditional measure $\mu_{\mathcal R_\Gamma}$: 
\begin{equation}\label{eq: transitivity of measure}
\forall A\in \mathcal A\ \forall B\in \MAlg(X,\mu),  \mu_{\mathcal R_\Gamma}(A)=\mu_{\mathcal R_\Gamma}(B) \impl B\in\mathcal A.
\end{equation}
Indeed let $A\in \mathcal A$, and suppose $B\in \MAlg(X,\mu)$ satisfies $\mu_{\mathcal R_\Gamma}(A)=\mu_{\mathcal R_\Gamma}(B)$. By Proposition \ref{prop: transitive on cond measure} we may find an involution $U\in[\mathcal R_\Gamma]$ such that $U(A)=B$. Using Lemma \ref{lem: manyperiodic}, we then find a sequence $(U_k)_{k\in\N}$ of elements of the group generated by $T_1,...,T_n$ such that $\mu(U_k(A)\bigtriangleup B)\to 0$. Since $\mathcal A$ is closed and invariant under elements of the group generated by $T_1,...,T_n$, we conclude that $B\in\mathcal A$.

Now observe recall that by the definition of our observable $c$, the algebra $\mathcal A$ contains a set $B$ whose $\mathcal R_\Gamma$-conditional measure is constant equal to $1/2$. 
By Maharam's lemma, whenever $A\in\MAlg(X,\mu)$ satisfies $\mu_{\mathcal 
R_\Gamma}(A)(x)\leq 1/2$ for all $x\in X$, we can write $A$ as the intersection 
of two elements of $\MAlg(X,\mu)$ whose $\mathcal R_\Gamma$-conditional measure 
is constant equal to $1/2$. So by \eqref{eq: transitivity of measure}, 
$\mathcal A$ contains all sets whose $\mathcal R_\Gamma$-conditional measure is 
everywhere at most $1/2$. Using Maharam's lemma once more, we see that every 
element of $\MAlg(X,\mu)$ can be written as the union of two elements of 
$\MAlg(X,\mu)$ whose $\mathcal R_\Gamma$-conditional measure is everywhere at 
most $1/2$, so we conclude $\mathcal A=\MAlg(X,\mu)$ as wanted.
\end{proof}

\section{Amenability and \texorpdfstring{$\LL^1$}{L1} full groups}\label{sec:amenability}

Recall that a Polish group $G$ is \textbf{extremely amenable} when every continuous $G$-action on a compact Polish space has a fixed point, while $G$ is \textbf{amenable} if every continuous $G$-action on a compact Polish space admits an invariant Borel probability measure. Following \cite{PestovConcentrationmeasurewhirly2009} we say that a Polish group $G$ is \textbf{whirly amenable} if it is amenable and, given any continuous $G$-action on a compact Polish space $K$, the support of every invariant Borel probability measure is contained in the set of fixed points. 

One has the following implications: whirly amenability implies extreme 
amenability which implies amenability. The first examples of whirly amenable 
groups were found by Y. Glasner, B. Tsirelson and B. Weiss, who showed that 
\emph{Lévy groups} are  whirly amenable \cite[Thm. 
1.1]{GlasnerautomorphismgroupGaussian2005}. Note that if $G$ is a Polish group 
which contains a dense increasing union of whirly amenable groups, then $G$ is 
whirly amenable.

Let $\Phi$ be an amenable graphing, our aim is to show that its derived $\LL^1$ 
full group is whirly amenable. Recall that a measure-preserving equivalence 
relation is called  \textbf{finite} if all its classes are finite. The two 
following lemmas are essentially contained in 
\cite{dowerkBoundedNormalGeneration2019}, but we include a full proof for the 
reader's convenience.

\begin{lemm}\label{lem: dense increasing union}Let $\Phi$ be an aperiodic 
graphing, suppose $\mathcal R_\Phi$ is written as an increasing union of 
equivalence relations $\mathcal R_n$. Then
$\bigcup_{n\in\N}[\mathcal R_n]\cap [\Phi]_1'$
is dense in $[\Phi]_1'$. 
\end{lemm}
\begin{proof}
By Theorem \ref{thm: topo gen by I_phi} we only need to be able to approximate involutions from  $[\Phi]_1'$ by elements from $\bigcup_{n\in\N}[\mathcal R_n]\cap [\Phi]_1'$. Let $U\in [\Phi]_1'$ be an involution, then the $U$-invariant sets $A_n=\{x\in X: x\mathcal R_n U(x)\}$ satisfy $\bigcup_{n\in\N} A_n= X$ since $\bigcup_n \mathcal R_n=\mathcal R_\Phi$. By the dominated convergence theorem, we then have $U_{A_n}\to U$. Moreover $U_{A_n}$ is an involution so $U_{A_n}\in [\Phi]_1'$ and by construction $U_{A_n}\in[\mathcal R_n]$ so $U_{A_n}\in [\mathcal R_n]\cap [\Phi]_1'$ and the lemma is proved.
\end{proof}

\begin{lemm}\label{lem: bounded classes}Let $\Phi$ be an aperiodic graphing, 
let $\mathcal R$ be a finite subequivalence relation of $\mathcal R_\Phi$ whose 
equivalence classes have a uniformly bounded $\Phi$-diameter. Then $[\mathcal 
R]$ is a closed subgroup of $[\Phi]_1'$. 
\end{lemm}
\begin{proof}
The fact that there is a uniform bound $M$ on the diameter of the $\mathcal R$-classes ensures us that $[\mathcal R]$ is a subgroup of $[\Phi]_1$. Moreover, the inclusion map is $M$-lipschitz (for the uniform distance on $[\mathcal R]$ and $d^1_\Phi$ on $[\Phi]_1$) and its inverse is clearly $1$-lipschitz, so $[\mathcal R]$ is closed in $[\Phi]_1$. Finally, $[\mathcal R]$ only consists of periodic elements, and since these belong to $[\Phi]_1'$ (see \cite[Lemma 3.10]{lemaitremeasurableanaloguesmall2018}), we conclude that $[\mathcal R]$ is a closed subgroup of $[\Phi]_1'$.
\end{proof}

\begin{theo}Let $\Phi$ be an amenable aperiodic graphing. Then $[\Phi]_1'$ is 
whirly amenable and $[\Phi]_1$ is amenable. 
\end{theo}
\begin{proof}
Since $[\Phi]_1$ is an extension of $[\Phi]_1'$ by an abelian (hence amenable) group, the second statement follows from the first, so we only need to prove that $[\Phi]_1'$ is whirly amenable. 

By the Connes-Feldmann-Weiss theorem \cite{connesamenableequivalencerelation1981a}, we may write $\mathcal R_\Phi$ as an increasing union of finite equivalence relations $\mathcal R_n$. Let us now change these so that their classes have uniformly bounded diameter. 

We first find an increasing sequence of integers $(\varphi(n))_{n\in\N}$ such that for all $n\in\N$, $$\mu(\{x\in X: \diam_\Phi([x]_{\mathcal R_n})>\varphi(n)\})<\frac 1{2^n}.$$
Then by the Borel-Cantelli lemma, for almost all $x\in X$ there are only finitely many $n\in\N$ such that $\diam_\Phi([x]_{\mathcal R_n})>\varphi(n)$. So if we define new equivalence relations $\mathcal S_n$ by $(x,y)\in\mathcal S_n$ if $(x,y)\in\mathcal R_n$ and $\forall m\geq n$, $\diam_\Phi([x]_{\mathcal R_m})\leq\varphi(n)$, we still have $\bigcup_{n\in\N} \mathcal S_n=\mathcal R$ and the $\mathcal S_n$-classes have a uniformly bounded diameter as wanted.

By Lemma \ref{lem: dense increasing union}, $\bigcup_{n\in\N}[\mathcal S_n]\cap [\Phi]_1'$ is dense in $[\Phi]_1'$. Then Lemma \ref{lem: bounded classes} yields that $[\mathcal S_n]\cap [\Phi]_1'=[\mathcal S_n]$ and that the induced topology on $[\mathcal S_n]$ is the uniform topology. Now recall that full groups of finite equivalence relations are whirly amenable for the uniform topology (they are products of groups of the form $\LL^0(Y,\nu,\mathfrak S_n)$ where $(Y,\nu)$ is non-atomic, and those are Levy groups by work of Y. Glasner, generalised by T. Giordano and V. Pestov \cite[Corollary 2.10]{Giordanoextremelyamenablegroups2007}). We conclude that each $[\mathcal S_n]$ is whirly amenable so that $[\Phi]_1'$ also is.
\end{proof}

In \cite[Theorem 5.1]{lemaitremeasurableanaloguesmall2018} it was shown that 
non amenable graphings have non amenable $\LL^1$ full groups and non amenable 
derived $\LL^1$ full groups. So Theorem \ref{thmi: amenable characterization} 
is established, and more generally the following holds.

\begin{theo}
Let $\Phi$ be an aperiodic graphing on a standard probability space $(X,\mu)$. 
The following are equivalent:
\begin{enumerate}[(1)]
\item the graphing $\Phi$ is amenable;
\item the $\LL^1$ full group $[\Phi]_1$ is amenable;
\item the derived $\LL^1$ full group $[\Phi]_1'$ is amenable.
\item the derived $\LL^1$ full group $[\Phi]_1'$ is extremely amenable.
\item the derived $\LL^1$ full group $[\Phi]_1'$ is whirly amenable. 
\end{enumerate}
\end{theo}

The above argument can be refined so as to show that derived $\LL^1$ full 
groups of amenable graphings are actually Lévy groups, and that one can find 
Levy families made of finite subgroups. The details are however tedious and the 
increasing chain of  finite subgroups is not explicit so we won't provide a 
proof here. It was asked in a first version of this paper whether the following 
metrics (which are induced by the $\LL^1$ metric on the $\LL^1$ full 
group of a $2$-odometer on the subgroups $\mathfrak S_{2^n}$) could provide a 
concrete Lévy 
family for the derived 
$\LL^1$ full group of the odometer.

\begin{defi}\label{df:L1 metric symmetric group}
Let $\mathfrak S_n$ be the symmetric group over $\{1,...,n\}$. Define 
\textbf{Spearman's footrule metric} on it by 
$$d^n_{\LL^1}(\sigma,\tau)=\frac 1n \sum_{i=1}^n \abs{\sigma(i)-\tau(i)}.$$
\end{defi}

Let us however point out that, as suggested by the referee, the family 
$(\mathfrak S_n,d^n_{\LL^1})$ is not a 
Lévy family. Indeed, by a result of Diaconis and Graham, the random variable 
$d^n_{\LL^1}(\id,\sigma)$ where $\sigma$ is a uniform element of $\mathfrak 
S_n$ has asymptotically mean $n/3$ and variance  $2n/45$ 
\cite[Thm. 1]{diaconisSpearmanFootruleMeasure1977} (moreover, suitably 
normalized, its 
law converges to the normal distribution). In particular, for $n$ large enough, 
the 
$\epsilon$-neighborhood of the ball of radius $n/3$ will be very close in 
measure to that same ball, thus violating concentration of measure.

\section{Large scale geometry and \texorpdfstring{$\LL^1$}{L1} full groups}\label{sec: geometry}

\subsection{Large scale geometry of Polish groups}

In this preliminary section, we recall how Rosendal's framework allow one to study Polish groups as geometric objects. First, a map $f:X\to Y$ between two metric spaces $(X,d_X)$ and $(Y,d_Y)$ is called a \textbf{quasi-isometric embedding} if there is a constant $C>0$ such that for all $x_1,x_2\in X$, 
$$-C+\frac 1C d_X(x_1,x_2)\leq d_Y(f(x_1),f(x_2))\leq C+ Cd_Y(f(x_1),f(x_2)).$$
It is \textbf{coarsely surjective} if there is $C>0$ such that every element of $Y$ is at distance at most $C$ from $f(X)$. Finally $f$ is a \textbf{quasi-isometry} if it is a coarsely surjective quasi-isometric embedding. We say that two metric spaces $(X,d_X)$ and $(Y,d_Y)$ are quasi-isometric if there is a quasi-isometry between them. It is not hard to check that quasi-isometry an equivalence relation between metric spaces. The equivalence class of a metric space is called its \textbf{quasi-isometry type}.

Two metrics $d$ and $d'$ on the same set $X$ are called \textbf{quasi-isometric} if the identity map is a quasi isometry between $(X,d)$ and $(X,d')$. Note that the coarse surjectivity condition is then automatic so this just means that there is $C>0$ such that for all $x_1, x_2\in X$, 
$$-C+\frac 1 C d(x_1,x_2)\leq d'(x_1,x_2)\leq C+ Cd(x_1,x_2).$$

A continuous right-invariant metric $d$ on a Polish group $G$ is called \textbf{maximal} if for every continuous right-invariant metric $d'$ on $G$, there is $C>0$ such that for all $g_1,g_2\in G$, $d'(g_1,g_2)\leq C+Cd(g_1,g_2)$. It follows from the definition that any two maximal right-invariant metrics on a Polish group are quasi-isometric, so if a Polish group admits such a metric, then it has a well-defined quasi-isometry type.

We finally give two conditions on a right-invariant metric which together are equivalent to the maximality of the  metric by \cite[Proposition 2.52]{rosendalCoarseGeometryTopological2018}.

\begin{defi}
A metric $d$ on a set $X$ is called \textbf{large-scale geodesic} if there is $K>0$ such that for any $x,y\in X$, there are $x_0,...,x_n\in X$ with $x_0=x$ and $x_n=y$ such that $d(x_i,x_{i+1})\leq K$ and
$$\sum_{i=0}^{n-1}d(x_i,x_{i+1})\leq K d(x,y).$$
\end{defi}

\begin{defi}
A right-invariant metric $d$ on a topological group $G$ is called \textbf{coarsely proper} if for every neighborhood of the identity $V$ and every $N\in\N$, there are a finite subset $F\subseteq G$ and $n\in\N$ such that 
$$B_d(e,N)\subseteq(FV)^n.$$
\end{defi}

\subsection{Derived \texorpdfstring{$\LL^1$}{L1} full groups  of amenable graphings}

In this section, we show that derived $\LL^1$ full groups of amenable graphings fit rather well into C. Rosendal's framework,  enabling their study as geometric objects. 

To be more precise, we show that the $\LL^1$ metric on derived $\LL^1$ full groups of amenable graphings is maximal among left-invariant continuous metrics up to quasi-isometry, which implies that derived $\LL^1$ full groups of amenable graphings have a well defined quasi-isometry type which is moreover induced by the $\LL^1$ metric. 

\begin{prop}
Let $\Phi$ be an amenable graphing. Then every element of $[\Phi]_1'$ is a limit of periodic elements. 
\end{prop}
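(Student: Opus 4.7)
The plan is to reuse almost verbatim the construction already carried out in the proof that $[\Phi]_1'$ is whirly amenable. First I would reduce to the case where $\Phi$ is aperiodic: as observed in the proof of Proposition \ref{prop: derived generated by 3-cycles}, the derived $\LL^1$ full group splits naturally as a direct product $[\Phi_p]_1' \times [\Phi_a]_1'$ along the periodic part $X_p$ and the aperiodic part $X_a$; on $X_p$ every element of $[\Phi_p]_1'$ is automatically periodic, so it suffices to handle the aperiodic factor.

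Assuming $\Phi$ aperiodic, I would invoke the Connes--Feldman--Weiss theorem to write $\mathcal R_\Phi$ as an increasing union of finite measure-preserving subequivalence relations $\mathcal R_n$. As in the whirly amenability argument, a Borel--Cantelli argument lets me replace these by finite subequivalence relations $\mathcal S_n \subseteq \mathcal R_n$ whose classes have a uniformly bounded $\Phi$-diameter and still satisfy $\bigcup_{n \in \N} \mathcal S_n = \mathcal R_\Phi$ (almost everywhere). Concretely, one picks an increasing sequence of integers $(\varphi(n))$ so that $\mu(\{x : \diam_\Phi([x]_{\mathcal R_n}) > \varphi(n)\}) < 2^{-n}$, and defines $(x,y) \in \mathcal S_n$ iff $(x,y) \in \mathcal R_n$ and $\diam_\Phi([x]_{\mathcal R_m}) \leq \varphi(m)$ for all $m \geq n$.

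Now I would apply Lemma \ref{lem: dense increasing union} to conclude that $\bigcup_{n\in\N} [\mathcal S_n] \cap [\Phi]_1'$ is dense in $[\Phi]_1'$, and Lemma \ref{lem: bounded classes} to see that in fact $[\mathcal S_n]$ is entirely contained in $[\Phi]_1'$. Since each $\mathcal S_n$ is a finite equivalence relation, every element of $[\mathcal S_n]$ has finite orbits and is therefore periodic. Hence every element of $[\Phi]_1'$ is a limit of periodic elements, as desired.

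There is essentially no obstacle in this argument: the entire machinery has already been developed in the preceding proof, and the proposition is in effect a corollary of the construction used there. The only mild subtlety is the reduction to the aperiodic case, which is handled by the standard decomposition of $[\Phi]_1'$ along the periodic/aperiodic partition of $X$.
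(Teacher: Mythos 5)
Your proof is correct and follows essentially the same route as the paper: Connes--Feldman--Weiss to write $\mathcal R_\Phi$ as an increasing union of finite subequivalence relations, then Lemma \ref{lem: dense increasing union} to get density of $\bigcup_n[\mathcal R_n]\cap[\Phi]_1'$, whose elements are automatically periodic. The Borel--Cantelli passage to the uniformly bounded relations $\mathcal S_n$ and the appeal to Lemma \ref{lem: bounded classes} are harmless but superfluous here (they are only needed for the whirly amenability statement), since for this proposition one does not need all of $[\mathcal S_n]$ to lie in $[\Phi]_1'$, only that the elements of $[\mathcal R_n]\cap[\Phi]_1'$ are periodic.
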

\begin{proof}
Using the Connes-Feldmann-Weiss theorem \cite{connesamenableequivalencerelation1981a}, we write $\mathcal R_\Phi$ as an increasing union of finite equivalence relations $\mathcal R_n$. Then every element of $[\mathcal R_n]$ is periodic and by Lemma \ref{lem: dense increasing union} the union $\bigcup_{n\in\N}[\mathcal R_n]\cap [\Phi]_1'$
is dense in $[\Phi]_1'$ so we are done.
\end{proof}


The next lemma is key to our results on the geometry of $\LL^1$ full groups. 
\begin{lemm}\label{lem:product of small}
	Let $\Phi$ be a graphing. Let $T\in[\Phi]_1$ be periodic. Then for every $N\in\N$, there are periodic elements $V_1,...,V_N\in [\Phi]_1$ such that $T=V_1\cdots V_N$ and for every $i\in\{1,...,N\}$, 
	$$d^1_\Phi(V_i,\id_X)=\frac 1 N d^1_\Phi(T,\id_X).$$
\end{lemm}
\begin{proof}
Let $M=d^1_\Phi(T,\id_X)$.
Let $(A_t)_{t\in[0,M]}$ be a continuous increasing path from $\emptyset$ to $\supp T$, and consider $B_t=\bigcup_{n\in\N}(TU\inv)^n(A_t)$. Since $TU\inv$ is periodic, the map $t\mapsto B_t$ is still a continuous increasing path from $\emptyset$ to $\supp T$ now consisting of $T$-invariant sets.
 
Define a continuous increasing map $\psi:[0,M]\to[0,M]$ by  $$\psi(t)=\int_{B_t}d_\Phi(x,T(x)).$$ We have $\psi(0)=0$ and $\psi(M)=M$, so  by the intermediate value theorem we find $t_0=0<t_1<...<t_{N-1}<t_N=M$ such that for all $i\in\{0,...,N\}$, $\psi(t_i)=\frac{iM}N$. We then let $A_i=B_{t_i}\setminus B_{t_{i-1}}$ for each $i\in\{1,...,N\}$. By construction each $A_i$ is $T$-invariant and $\supp T=\bigsqcup_{i=1}^n A_i$. So if we define $V_i=T_{A_i}$, we get $T=\prod_{i=1}^n V_i$. Finally for each $i\in\{1,...,n\}$ the equality $A_i=B_{t_i}\setminus B_{t_{i-1}}$ yields $$d^1_\Phi(\id_X,V_i)=\int_{B_{t_i}}d_\Phi(x,T(x))-\int_{B_{t_{i-1}}}d_\Phi(x,T(x))=\psi(t_{i})-\psi(t_{i-1})=\frac MN$$
as wanted.
\end{proof}

\begin{rema}
	Observe that the above proof also yields the existence of a geodesic 
	segment between any periodic element and the identity map. Indeed, if 
	$\psi\inv$ is a right inverse of $\psi$ defined above, we let 
	$W_t=T_{B_{\psi\inv(t)}}$, and then $(W_t)_{t\in[0,M]}$ is a geodesic 
	segment from $\id_X$ to $T$.
\end{rema}

\begin{prop}
	Let $\Phi$ be an amenable graphing, let $K>0$ and let $N\in\N$. Then we have the following inclusions in the derived $\LL^1$ full group of $\Phi$:
	$$B_{d^1_\Phi}(1,K/N)^N\subseteq B_{d^1_\Phi}(1,K)\subseteq B_{d^1_\Phi}(1,K/N)^{N+1}.$$
\end{prop}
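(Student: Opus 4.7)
The plan is to prove the two inclusions separately; the left one is essentially the triangle inequality while the right one uses both the density of periodic elements (from the preceding proposition, which is where amenability enters) and Lemma \ref{lem:product of small}.

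For the first inclusion $B_{d^1_\Phi}(1,K/N)^N\subseteq B_{d^1_\Phi}(1,K)$, I would take $T_1,\ldots,T_N\in[\Phi]_1'$ with $d^1_\Phi(T_i,1)<K/N$ and use right-invariance together with the triangle inequality:
\[
d^1_\Phi(T_1\cdots T_N,1)\leq\sum_{i=1}^N d^1_\Phi(T_1\cdots T_i,T_1\cdots T_{i-1})=\sum_{i=1}^N d^1_\Phi(T_i,1)<K.
\]
This step requires nothing beyond the right-invariance of the $\LL^1$ metric and does not use amenability.

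For the second inclusion $B_{d^1_\Phi}(1,K)\subseteq B_{d^1_\Phi}(1,K/N)^{N+1}$, let $T\in[\Phi]_1'$ with $d^1_\Phi(T,1)<K$. The idea is to split off a small error term by approximating $T$ by a periodic element and then applying Lemma \ref{lem:product of small} to that periodic element. Choose $\varepsilon>0$ satisfying
\[
\varepsilon<\min\bigl(K/N,\;K-d^1_\Phi(T,1)\bigr).
\]
By the preceding proposition (using amenability of $\Phi$), pick a periodic $T'\in[\Phi]_1'$ with $d^1_\Phi(T',T)<\varepsilon$. Then $d^1_\Phi(T',1)<K$, so $d^1_\Phi(T',1)/N<K/N$. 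Applying Lemma \ref{lem:product of small} to $T'$, we get periodic elements $V_1,\ldots,V_N\in[\Phi]_1$ (automatically in $[\Phi]_1'$ since periodic elements belong to the derived $\LL^1$ full group) with $T'=V_1\cdots V_N$ and $d^1_\Phi(V_i,1)=d^1_\Phi(T',1)/N<K/N$. Setting $V_{N+1}:=T'^{-1}T\in[\Phi]_1'$, we have $d^1_\Phi(V_{N+1},1)=d^1_\Phi(T',T)<\varepsilon<K/N$ by right-invariance, and $T=V_1\cdots V_N V_{N+1}$ exhibits $T$ as a product of $N+1$ elements of $B_{d^1_\Phi}(1,K/N)$.

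The only subtle point is the choice of $\varepsilon$: it has to be taken small enough to simultaneously guarantee that the error term $V_{N+1}$ sits in $B_{d^1_\Phi}(1,K/N)$ and that the approximant $T'$ still lies strictly inside $B_{d^1_\Phi}(1,K)$, so that Lemma \ref{lem:product of small} produces factors of norm strictly less than $K/N$. Since $d^1_\Phi(T,1)<K$, there is strictly positive slack and such an $\varepsilon$ exists. This is really where the open-ball statement is used, and it is the only genuine obstacle — everything else is bookkeeping around the two previously established ingredients (density of periodic elements in the amenable case, and the equal-splitting lemma for periodic elements).
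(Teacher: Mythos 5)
Your overall strategy is exactly the paper's: triangle inequality plus right-invariance for the first inclusion and, for the second, approximation by a periodic element (this is where amenability enters) followed by the equal-splitting Lemma \ref{lem:product of small}. Your choice of $\varepsilon<\min(K/N,\,K-d^1_\Phi(T,1))$ also handles the open-ball issue correctly.

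However, you repeatedly invoke invariance on the wrong side. The metric $d^1_\Phi$ satisfies $d^1_\Phi(AS,BS)=d^1_\Phi(A,B)$ (right-invariance) but is \emph{not} left-invariant; conjugation is not even Lipschitz in general (Example \ref{ex: not bilip iso}). In the first inclusion, the telescoping term $d^1_\Phi(T_1\cdots T_i,\,T_1\cdots T_{i-1})=d^1_\Phi(T_i,1)$ cancels a common \emph{left} factor, which would require left-invariance; this is harmless, since telescoping the other way,
$$d^1_\Phi(T_1\cdots T_N,1)\leq\sum_{i=1}^N d^1_\Phi(T_i\cdots T_N,\,T_{i+1}\cdots T_N)=\sum_{i=1}^N d^1_\Phi(T_i,1)<K,$$
gives the claim. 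In the second inclusion the same slip is a genuine error: for $V_{N+1}=T'^{-1}T$, right-invariance yields $d^1_\Phi(T'^{-1}T,1)=d^1_\Phi(T'^{-1},T^{-1})$, \emph{not} $d^1_\Phi(T',T)$. The quantities $d^1_\Phi(T'^{-1}T,1)$ and $d^1_\Phi(TT'^{-1},1)$ are the norms of conjugate elements ($TT'^{-1}=T(T'^{-1}T)T^{-1}$), and Example \ref{ex: not bilip iso} shows such norms can differ by an arbitrarily large factor, so your bound on $V_{N+1}$ is unjustified. The fix is immediate: put the error factor on the left, writing $T=(TT'^{-1})\,T'=(TT'^{-1})V_1\cdots V_N$, where now $d^1_\Phi(TT'^{-1},1)=d^1_\Phi(T,T')<\varepsilon<K/N$ genuinely follows from right-invariance; the rest of your argument, including the bound $d^1_\Phi(T',1)<K$ needed to apply Lemma \ref{lem:product of small}, goes through unchanged. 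With that correction your proof coincides with the one in the paper.
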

\begin{proof}
	The first inclusion is a straightforward consequence of the triangle inequality and right invariance. For the second one, let $T\in B_{d^1_\Phi}(1,K)$, then since periodic elements are dense we can find $U\in B_{d^1_\Phi}(1,K/N)$ such that $TU\inv$ is periodic and $TU\inv\in B_{d^1_\Phi}(\id_X,K)$. 
	
	By the previous proposition, the are $V_1,...,V_N\in B_{d^1_\Phi}(\id_X,K/N)$ such that $TU\inv =V_1\cdots V_n$. We conclude that $T=TU\inv U=\left(\prod_{k=1}^NV_k\right) U$ so $T\in B_{d^1_\Phi}(1,K/N)^{N+1}$ as wanted. 
\end{proof}

\begin{coro}\label{cor: l1 is maximal on derived of amenable}
Let $\Phi$ be an amenable graphing, then the $\LL^1$ metric on the derived $\LL^1$ full group of $\Phi$ is maximal.
\end{coro}
\begin{proof}
By the previous proposition, the $\LL^1$ metric is both large-scale geodesic and coarsely proper. So \cite[Proposition 2.52]{rosendalCoarseGeometryTopological2018} yields that the $\LL^1$ metric is maximal. 
\end{proof}

We finally show that although derived $\LL^1$ full groups of amenable graphings do have a geometry, the latter is fundamentally infinite-dimensional.

\begin{prop}\label{prop:infinite dimensional}
Suppose $\Gamma$ is an amenable group containing an element of infinite order, and let $\Gamma\act(X,\mu)$ be a free measure-preserving action. Then for every $n\in\N$, the derived $\LL^1$ full group of the $\Gamma$ action contains a quasi-isometric copy of $\R^n$. 
\end{prop}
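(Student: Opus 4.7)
The strategy is to exhibit an explicit quasi-isometric embedding $\phi:\R^n\to[\Phi]_1'$ by constructing $n$ commuting families of periodic transformations, each supported in a distinct sub-tower of a tall Rokhlin tower for the $\Z$-subaction generated by the infinite order element $\gamma\in\Gamma$. I may assume $\gamma$ lies in the finite generating set $S$ of $\Gamma$, so that $d_\Phi(x,\gamma^k x)=|k|$ along each orbit. Since the $\Gamma$-action is free, so is the $\Z$-subaction, and Rokhlin's lemma supplies, for every $M\in\N$, a set $F\subseteq X$ with $F,\gamma F,\ldots,\gamma^{M-1}F$ pairwise disjoint and total measure close to $1$.

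The key building block is the cyclic shift $R$ of a Rokhlin tower of height $M$: $R$ acts by $\gamma$ on levels $0,\ldots,M-2$ and by $\gamma^{-(M-1)}$ on level $M-1$. It is a periodic element of $[\Phi]_1$, hence belongs to $[\Phi]_1'$ by \cite[Lemma 3.11]{lemaitremeasurableanaloguesmall2018}, and its powers satisfy $\|R^k\|_1=2k(M-k)\mu(F)$; for $M\mu(F)\approx 1$ and $k\leq M/2$ this gives $k\leq\|R^k\|_1\leq 2k$. Interpolating continuously between integer powers via Lemma \ref{lem:product of small} produces a map $\phi^{(M)}_1:[-M/2,M/2]\to[\Phi]_1'$ that is a QI embedding with constants independent of $M$.

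For the $n$-dimensional case I would take a Rokhlin tower of height $Mn$ and split it into $n$ pairwise disjoint consecutive sub-towers $\mathcal T_1,\ldots,\mathcal T_n$ of height $M$. Applying the one-dimensional construction inside each $\mathcal T_i$ produces maps $\phi_i^{(M)}$ with pairwise disjoint supports, hence pairwise commuting. Since the $\LL^1$ norms of transformations with disjoint supports add, $\phi^{(M)}(t_1,\ldots,t_n):=\prod_i\phi_i^{(M)}(t_i)$ satisfies $d^1_\Phi(\phi^{(M)}(s),\phi^{(M)}(t))=\sum_i\|\phi_i^{(M)}(s_i)\phi_i^{(M)}(t_i)^{-1}\|_1\approx\sum_i|s_i-t_i|$, yielding a QI embedding of the cube $[-M/2,M/2]^n$ in $(\R^n,\ell^1)$, which is itself QI to $\R^n$ on that cube.

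The main obstacle is extending from a cube to all of $\R^n$: a tower of fixed height $M$ only yields elements of $\LL^1$ norm $O(M)$, so to reach arbitrarily large $\LL^1$ norms one must use a sequence of towers of heights $M_k\to\infty$. The technical heart of the argument is to choose these towers coherently --- for instance by nesting them so that the base $F_{k+1}$ is a $\gamma$-translate of a subset of $F_k$ and the scale-$M_{k+1}$ shift extends naturally the scale-$M_k$ shift --- so that the partial embeddings $\phi^{(M_k)}$ and $\phi^{(M_{k+1})}$ agree up to bounded $\LL^1$ error on their common domain; once such compatibility is arranged, they glue into a single quasi-isometric embedding $\phi:\R^n\to[\Phi]_1'$ with uniform QI constants.
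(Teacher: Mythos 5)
Your one-tower analysis is correct as far as it goes, but the construction stops exactly where the real difficulty begins, and the step you defer --- gluing the embeddings $\phi^{(M_k)}$ across a sequence of towers of heights $M_k\to\infty$ --- is a genuine gap, not a technicality. A cyclic tower shift $R$ of height $M$ satisfies $R^M=\id_X$, so every element it produces has $\LL^1$ norm $O(M)$; to cover all of $\R^n$ you are forced into the multi-scale scheme, and there the \emph{lower} quasi-isometry bound is the real problem. If $s$ is handled by the tower of height $M_k$ and $t$ by the tower of height $M_{k'}$ with $k\neq k'$, you must show $d^1_\Phi(\phi(s),\phi(t))\geq \frac 1C\abs{s-t}-C$; the triangle inequality only gives $d^1_\Phi(\phi(s),\phi(t))\geq \abs{\,d^1_\Phi(\id_X,\phi(s))-d^1_\Phi(\id_X,\phi(t))\,}$, which vanishes for instance when $t=-s$, and the shifts of two different towers are genuinely different transformations: even for nested towers, $R_k^j$ and $R_{k+1}^j$ disagree on the top $j$ levels of every sub-copy, and their $\LL^1$ distance is of order $j$, i.e.\ comparable to the norms themselves rather than bounded. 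Nothing in the proposal controls this; ``once such compatibility is arranged'' is precisely the statement that needs proving. (A secondary point: $d_\Phi(x,\gamma^kx)=\abs{\gamma^k}_S$, which need not equal $\abs{k}$ if $\gamma$ is distorted in $\Gamma$; your lower bounds implicitly assume it does.)

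The paper sidesteps all of this by using \emph{induced} transformations rather than tower shifts. Writing $T$ for the transformation given by the infinite-order element, Rokhlin's lemma gives $A\subseteq X$ meeting almost every $T$-orbit with $A,T(A),\dots,T^{2n-1}(A)$ disjoint, and one sets $T_i=T_{T^{2i}(A)}T_{T^{2i+1}(A)}\inv$. Each $T_i$ is the commutator of $T_{T^{2i}(A)}\in[\Phi]_1$ with $T\inv$ (since $TT_{T^{2i}(A)}T\inv=T_{T^{2i+1}(A)}$), hence lies in the derived $\LL^1$ full group without invoking periodicity; the $T_i$ have disjoint supports, hence commute and their norms add; and by Kac's theorem the $k$-th return time to $T^{2i}(A)$ integrates to $k$, so the norm of $T_i^k$ grows linearly in $k$ \emph{without bound}. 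This yields a quasi-isometric copy of $\Z^n$ (hence of $\R^n$) from a single set $A$, with no towers of growing height and no gluing. The moral is that the unbounded linear growth of the norms of powers has to come from somewhere --- an induced map provides it for free, whereas a tower shift, being periodic of period $M$, cannot --- and in your write-up it is exactly this ingredient that is missing.
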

\begin{proof}
It suffices to produce a quasi-isometric copy of $\Z^n$ since the latter is quasi-isometric to $\R^n$. 
Let $T$ be the aperiodic measure-preserving transformation which corresponds to the element of $\Gamma$ of infinite order. 
Using Rokhlin's lemma for $T$, we find $A\subseteq X$  which intersect almost every $T$-orbit and such that all sets $A,T(A),...,T^{2n-1}(A)$ are disjoint. 
For each $i\in\{0,...,n-1\}$ let $T_i=T_{T^{2i}(A)}T_{T^{2i+1}(A)}\inv$.  
Observe that since $T{T_{T^{2i}(A)}}T\inv=T_{T^{2i+1}(A)}$, each $T_i$ is a commutator and thus belongs to the derived $\LL^1$ full group. 
It is now straightforward to check that the marked group generated by $T_1,...,T_n$ is quasi-isometric to $\Z^n$. 
\end{proof}

As pointed out by the referee, it would be interesting to know whether some 
infinite dimensional metric spaces such as $\ell^1$ can be coarsely embedded 
into derived $\LL^1$ full groups.
We end this section with a basic open question on the geometry of derived 
$\LL^1$ full groups.

\begin{ques}\label{qu: quasi-isom 2 3 odometer}
Are the derived $\LL^1$ full groups of the $2$-odometer and of the $3$-odometer quasi-isometric ?
\end{ques}

Note that by \cite{dowerkBoundedNormalGeneration2019}, the derived $\LL^1$ full 
groups of the $2$-odometer and of the $3$-odometer are not isomorphic as 
abstract groups. Also, the countable group of dyadic  permutations $\mathfrak 
S_{2^\infty}$ (resp. $\mathfrak S_{3^\infty}$) is dense in the derived $\LL^1$ 
full group of the $2$-odometer (resp. $3$-odometer), and the induced metric is 
the inductive limit of the metrics from Definition \ref{df:L1 metric symmetric 
group}, so this is really a question about quasi-isometry for the non finitely 
generated countable groups $\mathfrak S_{2^\infty}$ and $\mathfrak 
S_{3^\infty}$ equipped with these natural metrics. 

\subsection{\texorpdfstring{$\LL^1$}{L1} full groups of ergodic \texorpdfstring{$\Z$}{Z}-actions} 

For measure-preserving $\Z$-actions, we have a much better understanding of elements of the $\LL^1$ full group. This will allow us to show that the $\LL^1$ metric on the whole $\LL^1$ full group is maximal, and hence defines its quasi isometry type. Since a measure-preserving $\Z$-action is nothing but a single measure-preserving transformation, we rather speak of $\LL^1$ full groups of measure-preserving transformations. Let us start by reminding Belinskaya's decomposition results for elements of the $\LL^1$ full group \cite{BelinskayaPartitionsLebesguespace1968}. 

Given a measure-preserving transformation $T$, we denote by $[T]_1$ its $\LL^1$ full group. If $T$ is aperiodic then every $T$-orbit is endowed with a linear ordering $\leq_T$ defined by $x\leq_Ty$ if and only if there is $n\geq 0$ such that $y=T^n(x)$. An element $U$ of the full group of $T$ is \textbf{almost positive} if for all $x\in X$, there is $N\in\N$ such that for all $n\geq N$ we have $U^n(x)\geq_Tx$. It is \textbf{almost negative} if for all $x\in X$, there is $N\in\N$ such that for all $n\geq N$ we have $U^n(x)\leq_T x$. Every element of the $\LL^1$ full group has a natural decomposition with respect to such elements. 

\begin{lemm}\label{lem:decompose positive negative periodic}
Let $T$ be a measure-preserving aperiodic transformation, let $U\in [T]_1$. Then there is a partition $X=X_-\sqcup X_{per}\sqcup X_+$ of $X$ into $U$-invariant sets such that $U_{X_-}$ is almost negative, $U_{X_+}$ is almost positive and $U_{X_{per}}$ is periodic. 
\end{lemm}
\begin{proof}
	This is a direct consequence of \cite[Theorem 3.2]{BelinskayaPartitionsLebesguespace1968}.
\end{proof}

Observe that since the sets from the above decomposition are $U$-invariant and partition $X$, we then have $U=U_{X_-}U_{X_{per}}U_{X_+}$ and
$$d^1_T(\id_X,U)=d^1_T(\id_X,U_{X_-})+
d^1_T(\id_X,U_{X_{per}})+d^1_T(\id_X,U_{X_+}).$$
Recall that if $T$ is an aperiodic measure-preserving transformation, every $U\in[T]_1$ has an associated cocycle $c_U:X\to \Z$ uniquely defined by the equation $U(x)=T^{c_U(x)}(x)$. We can then defined the \textbf{index map} $I:[T]_1\to\R$ by $I(U)=\int_X c_U(x)d\mu(x)$. In \cite[Corollary 4.20]{lemaitremeasurableanaloguesmall2018}, we prove that if $T$ is ergodic, the kernel of the index map is the derived $\LL^1$ full group of $T$ and the index map takes values into $\Z$. The conjunction of Proposition 4.15 and Proposition 4.17 from \cite{lemaitremeasurableanaloguesmall2018} implies the following result.

\begin{lemm}
Let $U\in [T]_1$ be almost positive, let $n=I(U)$. Then there is a family $(A_i)_{i=1}^n$ of subsets of $X$  such that $U=T_{A_1}\cdots T_{A_n} V$, where $V$ is periodic. \qed
\end{lemm}

\begin{theo}
Let $T$ be an ergodic measure-preserving transformation.
Then the $\LL^1$ metric on the $\LL^1$ full group of $T$ is maximal. 
\end{theo}
\begin{proof}
We first show that for every $N\in\N$, every element of $[T]_1$ of $\LL^1$ norm\footnote{By $\LL^1$ norm, we mean the $\LL^1$ distance to $\id_X$.} less than $N$ can be written as a product of at most $11N$ elements, all of which are either $T$, $T\inv$, or periodic of $\LL^1$ norm less than $1$. 

Let $X=X_{per}\sqcup X_+\sqcup X_-$ be the partition of $X$ into $U$-invariant sets provided by Lemma \ref{lem:decompose positive negative periodic}. We start by noting that $d^1_T(U_{X_+},\id_x)\leq d^1_T(U,\id_x)$ and thus $I(U_{X_+})<N$. We may thus write
$$U_{X_+}=T_{A_1}\cdots T_{A_n} V,$$
where $n=I(U_{X_+})<N$ and $V$ is periodic. By Kac's return time theorem \cite[Theorem 2']{Kacnotionrecurrencediscrete1947}, each $T_{A_i}$ is at distance $1$ from $\id_X$, so by the triangle inequality, we have $d^1_T(V,\id_X)<2N$. 

Now for each $i\in \{1,...,n\}$ we have $T_{A_i}=(T_{A_i}T\inv)T$. Moreover 
$T_{A_i}T\inv$ is periodic and $d^1_T(T_{A_i}T\inv,\id_X)<2$. Since $U_{X_+}$ 
is equal to the product $(T_{A_1}T\inv)T\cdots (T_{A_n}T\inv)T V$ and 
$d^1_T(V,\id_X)<2N$, Lemma \ref{lem:product of small} applied to both $V$ and 
the $T_{A_i}T\inv$ yields that $U_{X_+}$ is a product of at most $5N$ elements, 
all of which are either periodic and in the open unit ball or equal to $T$. 

The same reasoning for $U_{X_-}$ yields that $U_{X_-}$ can be written as a product of at most $5N$ elements, all of which are either periodic of norm $<1$ or equal to $T\inv$.

Finally, Lemma \ref{lem:product of small} yields that $U_{X_{per}}$ can be written as the product of at most $N$ periodic elements of norm $<1$. 
We can thus conclude that $U=U_{X_+}U_{X_{per}}U_{X_-}$ can be written as the product of at most $11N$ elements, all of which are either periodic of norm $<1$ or belong to $\{T,T\inv\}$.

This fact already yields that the $\LL^1$ metric is large-scale geodesic, and we now need to show that $d^1_T$ is coarsely proper. 
So let $N\in\N$, let $V$ be a neighborhood of the identity.
We find $k\in\N$ such that $B_{d^1_T}(\id_X,1/k)\subseteq V$ and we deduce from the above fact along with  Lemma \ref{lem:product of small} that
$$B_{d^1_T}(\id_X,N)\subseteq(\{T,T\inv\} V)^{11kN}.$$
The metric $d^1_T$ is thus both large-scale geodesic and coarsely proper, so using \cite[Proposition 2.52]{rosendalCoarseGeometryTopological2018} we conclude that $d^1_T$ is maximal as wanted. 
\end{proof}

\subsection{Automatic quasi-isometry: proof of Theorem \ref{thmi: auto QI}}\label{sec: automatic QI}

In this section we prove the following result, which directly yields Theorem \ref{thmi: auto QI}.
\begin{theo}\label{thm: auto QI}
	Let $\Phi$ and $\Psi$ be two amenable ergodic graphings on a standard probability space  $(X,\mu)$. Then every abstract group isomorphism between their $\LL^1$ full groups must be a quasi-isometry for their respective $\LL^1$ metrics. 
\end{theo}
\begin{proof}
	Let $\rho: [\Phi]_1\to[\Psi]_1$ be an abstract group isomorphism. 
	First observe that by \cite[Corollary 3.19]{lemaitremeasurableanaloguesmall2018}, there is a measure-preserving transformation $S$ such that for all $\U\in[\Phi]_1$, $\rho(U)=SUS\inv$. In other words, we can assume that $\rho$ is the conjugacy by some $S\in\Aut(X,\mu)$, and it follows that $\rho$ is $d_u$-isometric, in particular it is continuous for the uniform topology. 
	On $[\Phi]_1$ the uniform topology is refined by the $\LL^1$ topology, and since it separates points we get that it generates the Borel $\sigma$-algebra of $[\Phi]_1$ induced by the $\LL^1$ topology. We conclude that $\rho$ is a Borel group isomorphism, and it follows from Pettis' lemma (see e.g. \cite[Theorem 1.2.5]{beckerDescriptiveSetTheory1996}) that $\rho$ is a topological group isomorphism. 
	
	In particular, $\rho$ induces a topological group isomorphism between $[\Phi]_1'$ and $[\Psi]_1'$, and since their respective $\LL^1$ metrics are maximal (Corollary \ref{cor: l1 is maximal on derived of amenable}), we conclude that $\rho$ induces a quasi-isometry for their respective $\LL^1$ metrics: there is $C>0$ such that for all $T_1,T_2\in[\Phi]'_1$, 
	$$-C+\frac 1Cd^1_\Phi(T_1,T_2)\leq d^1_\Psi(\rho(T_1),\rho(T_2))\leq Cd^1_\Phi(T_1,T_2)+C$$
	
	In order to establish the same inequality for elements of the whose $\LL^1$ full group, first note that by right-invariance, one may as well assume $T_1=\id_X$, and then note that by symmetry, it suffices to find $C'>0$ such that for all $T\in[\Phi]_1$, 
	$$d^1_\Phi(\id_X,\rho(T)\leq C'd^1_\Psi(\id_X,T)+C'.$$
	We will show that $C'=6C$ works. 
	
	Let $T\in[\Phi]_1$, then by a well-known lemma (see e.g. \cite[Proposition 2.7]{lemaitremeasurableanaloguesmall2018}), there is a partition $(A_1,A_2,B_1,B_2,B_3)$ of $\supp T$ such that 
	$$T(A_1)=A_2, T(B_1)=B_2\text{ and } T(B_2)=B_3.$$
	We use this decomposition to define three involutions $U_1,U_2,U_3$ as follows:
	\begin{align*}
	U_1(x)&=\left\{\begin{array}{cl}
		T(x) & \text{if }x\in A_1\sqcup B_1\\
		T\inv(x) & \text{if }x\in A_2\sqcup B_2\\
		x & \text{otherwise,}
	\end{array}
	\right.\\
	U_2(x)&=\left\{\begin{array}{cl}
		T(x) & \text{if }x\in B_2\\
		T\inv(x) & \text{if }x\in B_3\\
		x & \text{otherwise and}
	\end{array}
	\right.
\\
	U_3(x)&=\left\{\begin{array}{cl}
		T(x) & \text{if }x\in A_2\sqcup B_3\\
		T\inv(x) & \text{if }x\in T(A_2)\sqcup T(B_3)\\
		x & \text{otherwise.}
	\end{array}
	\right.
	\end{align*}
	By construction for each $i=1,2,3$ we have $d^1_\Phi(\id_X,U_i)\leq 2d^1_\Phi(\id_X,T)$, moreover since $U_1,U_2$ and $U_3$ are involutions they actually belong to the derived $\LL^1$ full group of $\Phi$. We can then decompose $T$ as 
	$$T=U_{1\restriction A_1\sqcup B_1}\sqcup U_{2\restriction B_2}\sqcup U_{3\restriction A_2\sqcup B_3}\sqcup\id_{X\setminus\supp T}.$$
	Because $\rho$ is the conjugacy by the transformation $S$, we then have 
	$$\rho(T)=\rho(U_1)_{\restriction S(A_1\sqcup B_1)}
			\sqcup \rho(U_2)_{\restriction S(B_2)}
			\sqcup \rho(U_3)_{\restriction S(A_2\sqcup B_3)}
			\sqcup\id_{X\setminus S(\supp T)}.$$
	We thus have $d^1_\Psi(\id_X,\rho(T))\leq \sum_{i=1}^3d^1_\Psi(\id_X,U_i)$.
	Since for $i=1,2,3$, $$d^1_\Psi(\id_X,\rho(U_i))\leq C d^1_\Phi(\id_X,U_i)+C\leq 2Cd^1_\Phi(\id_X,T)+C,$$
	we conclude that 
	$$d^1_\Psi(\id_X,\rho(T))\leq \sum_{i=1}^3d^1_\Psi(\id_X,U_i)\leq 6C d^1(\Phi(\id_X,T))+3C$$
	as wanted.
\end{proof}

Note that if we knew that the $\LL^1$ metric on $\LL^1$ full groups of amenable graphings is maximal, then the first two paragraphs of the above proof would suffice to get the conclusion of Theorem \ref{thm: auto QI}. As explained in the introduction, we actually conjecture that the following is true, which would then imply that one can remove both the amenability and the ergodicity hypothesis in Theorem \ref{thm: auto QI} . 

\begin{conj}
	Let $\Phi$ be a graphing. Then the $\LL^1$ metric on the $\LL^1$ full group of $\Phi$ is maximal. 
\end{conj}

We end this section by explaining why in Theorem \ref{thm: auto QI} the conclusion cannot be strengthened to bilipschitzness. Recall that in a topological space, a subset is \textbf{$G_\delta$} if it can be written as a countable intersection of open sets, and that by the Baire category theorem, in a complete metric space every countable intersection of \emph{dense} open sets has to be dense.

\begin{lemm}\label{lem: unbounded return}
	Let $T$ be an aperiodic transformation. Then the set of all $A\in\MAlg(X,\mu)$ such that the return time to $A$ is unbounded is dense $G_\delta$ in $\MAlg(X,\mu)$.
\end{lemm}
\begin{proof}
	The set $\mathcal B$ of all $A\in\MAlg(X,\mu)$ such that the return time to $A$ is unbounded is the intersection over $n\in\N$ of the sets 
	$$\mathcal B_n:=\{A\in \MAlg(X,\mu): \mu(T^{-n}(A)\setminus \bigcup_{i=0}^{n-1}T^{-i}(A))>0\},$$
	so it is $G_\delta$. Let us now fix $n\in \N$ and show that $\mathcal B_n$ is dense so as to finish the proof. Let $\epsilon>0$, let $N\in\N$ such that $\frac nN<\epsilon$. By aperiodicity we find $B\subseteq X$ with positive measure such that $B,T(B),...,T^{Nn}(B)$ are disjoint, and so in particular $\mu(B)<\frac1{Nn}$. Now observe that for any $A\in\MAlg(X,\mu)$, the set 
	$$(A\cup B\cup T^n(B))\setminus \bigsqcup_{i=1}^{n-1}T^i(B)$$
	is in $\mathcal B_n$ and is $n\mu(B)<\epsilon$-close to $A$, which shows that $\mathcal B_n$ is dense.
\end{proof}

\begin{exam}\label{ex: not bilip iso} 
	Let $T$ be an aperiodic measure-preserving transformation, and consider $A\subseteq X$ with unbounded return time, as provided by the previous lemma. Then the conjugacy by the induced transformation $T_A$ is not bilipschitz: for each $n$ we may find $B\subseteq A$ such that  $T(B)$, ..., $T^n(B)$ are all disjoint from $A$, and then if $U$ is the involution defined by $U=T_{\restriction B}\sqcup T\inv_{\restriction T(B)}\sqcup \id_{X\setminus (B\sqcup T(B)}$, we have $d^1_T(\id_X,T_AUT_A\inv)\geq n d^1_T(\id_X,U)$.	
\end{exam}

\section{Topological rank two for rank one transformations}\label{sec: rank one}

In this section we prove that every rank one measure-preserving transformation has an $\LL^1$ full group whose topological rank is equal to two. Let us recall the definition of such transformations and first introduce some useful terminology.

\begin{defi}\label{df: Rokhlinize}
Let $T$ be a measure-preserving transformation and $A\subseteq X$ Borel. Let $N_T(A)\in\N$ be the greatest integer such that the sets $A, T(A),..., T^{N_T(A)-1}(A)$ are all disjoint. We then have a \textbf{Rokhlin partition} $\mathcal P_{A,T}=(A,T(A),...,T^{N_T(A)-1}(A))$ of $Y_{A,T}:=\bigcup_{i=0}^{N_T(A)-1}T^i(A)$ and we let $M_{A,T}$ be the sets of finite unions of elements of $\mathcal P_{A,T}$. 
\end{defi}

Note that Rokhlin's lemma guarantees the existence of Rokhlin partitions $\mathcal P_{A,T}$ with arbitrarily small atoms and $\mu(Y_{A,T})$ arbitrarily close to $1$. 

\begin{defi}
A measure-preserving transformation $T$ is \textbf{rank one} if for every $\epsilon>0$ and every finite family $(B_i)_{i=1}^k$ of Borel sets there is a Borel set $A$  and Borel sets $A_1,...,A_k\in M_{A,T}$ such that for all $i\in\{1,...,k\}$, 
$$\mu(A_i\bigtriangleup B_i)<\epsilon.$$
\end{defi}

It is not hard to check from the definition that rank one transformations are ergodic and form a $G_\delta$ set in $\Aut(X,\mu)$ for the weak topology (see \cite[Lemme 5.26]{lemaitreGroupesPleinsPreservant2014} for a detailed proof). An easy example of a rank one transformation is provided by the odometer, and so by Halmos' conjugacy lemma (see \cite[Theorem 2.4]{KechrisGlobalaspectsergodic2010}), rank one transformations form a dense $G_\delta$ set in $\Aut(X,\mu)$.

The following theorem of Baxter says that rank one transformations are actually quite close to odometers. 

\begin{theo}[\cite{Baxterclassergodictransformations1971}]Let $T$ be a rank one 
transformation. Then there is a decreasing sequence $(A_n)$ of Borel subsets of 
$X$ such that $M_{A_n,T}$ is  \emph{increasing} and its union is $\mu$-dense in 
the Borel $\sigma$-algebra. \end{theo}

A sequence $(A_n)$ as in the previous theorem will be called a \textbf{rank one basis} of $T$, so in our terminology a measure-preserving is rank one if and only if it admits a rank one basis. Observe that if $(A_n)$ is a rank one basis for $T$ then the measure of $A_n$ then tends to zero and $N_T(A_n)\to+\infty$.

\begin{exam}	
Let $T_0$ be the dyadic odometer on $X=\{0,1\}^\N$, and for every $n\in\N$ we let $A_n$ denote the set of sequences $(x_i)\in \{0,1\}^\N$ such that $x_0=\cdots=x_n=0$. Then $(A_n)$ is a rank one basis for $T_0$ with $N_{T_0}(A_n)=2^n$ and the additional property that $Y_{A_n,T}=X$ for every $n\in\N$.
\end{exam}

We will now generalize to rank one transformations the fact that dyadic transformations are dense in the derived $\LL^1$ full group of the odometer. Let us first see how Definition \ref{df: Rokhlinize} provides us natural embeddings of the symmetric group in the derived $\LL^1$ full group.

Let $T$ be a measure-preserving transformation and $A\subseteq X$ be Borel. We then define the embedding 
$$\rho_A: \mathfrak S(\{0,...,N_T(A)-1\})\to [T]_1'$$
by: for all $\sigma\in\mathfrak S(\{0,...,N(A)-1\})$ and all $x\in Y_{A,T}$, if $i\in\{0,...,N(A)-1\}$ is such that $x\in T^i(A)$, then 
$$\rho_A(\sigma)(x)=T^{\sigma(i)-i}(x),$$
and $\rho_A(\sigma)(x)$ for $x\not\in Y_{A,T}$. Observe that by construction for all $i\in\{0,...,N(A)-1\}$ we have $\rho_A(\sigma)(T^i(A))=T^{\sigma(i)}(A)$.
We then  let $$\displaystyle\mathfrak S_{A,T}:=\rho_A(\mathfrak S(\{0,...,N(A)-1\}))\text{ and }\mathfrak A_{A,T}:=\rho_A(\mathfrak A(\{0,...,N(A)-1\})).$$
Observe that if $A\subseteq B$ and $M_{A,T}\subseteq M_{B,T}$ then $\mathfrak S_{A,T}\leq \mathfrak S_{B,T}$ and $\mathfrak A_{A,T}\leq\mathfrak A_{B,T}$. 
The following generalizes to $\LL^1$ full groups a result from the thesis of the author \cite[Lemme 5.31]{lemaitreGroupesPleinsPreservant2014}.

\begin{prop}\label{prop: rank one alternating is dense in derived}
Let $(A_n)$ be a rank one basis of a measure-preserving transformation $T$. Then 
$$\bigcup_{n\in\N}\mathfrak A_{A_n,T}$$
is dense in $[T]_1'$. 
\end{prop}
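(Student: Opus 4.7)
The plan is to reduce to approximating $3$-cycles and then to construct the approximation explicitly using the rank one structure.

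By Proposition \ref{prop: derived generated by 3-cycles}, $[T]_1'$ is topologically generated by its $3$-cycles, so it suffices to show that every $3$-cycle $U\in[T]_1'$ lies in the $\LL^1$-closure of $\bigcup_n \mathfrak A_{A_n,T}$. Fix such a $U$ and pick $B\subseteq X$ with $\supp U = B\sqcup U(B)\sqcup U^2(B)$; since $U\in[T]_1$ we can write $U(x)=T^{a(x)}(x)$ and $U^2(x)=T^{b(x)}(x)$ for $x\in B$, with $a,b\colon B\to\Z$ $\mu$-integrable. Given $\epsilon>0$, integrability of $a$ and $b$ allows me to partition $B=B_0\sqcup B_1\sqcup\cdots\sqcup B_k$ so that on each $B_i$ ($i\geq 1$) both $a$ and $b$ are constant with values $a_i,b_i$, and $\int_{B_0}(|a|+|b|)\,d\mu<\epsilon$. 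Letting $U'$ be the disjoint product of the $3$-cycles obtained by restricting $U$ to $B_i\sqcup T^{a_i}(B_i)\sqcup T^{b_i}(B_i)$ for $i=1,\dots,k$ (and $U'=\id_X$ outside), a direct estimate gives $d^1_T(U,U')\leq 2\epsilon$, so it suffices to approximate $U'$.

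Next, exploiting the density of $\bigcup_n M_{A_n,T}$ in the measure algebra, I would approximate the finite partition $\mathcal Q:=\{B_i,T^{a_i}(B_i),T^{b_i}(B_i)\}_{i=1}^k\cup\{X\setminus\supp U'\}$ by a partition $\widetilde{\mathcal Q}=\{\widetilde B_i,\widetilde C_i^+,\widetilde C_i^-\}_{i=1}^k\cup\{\widetilde C\}$ whose elements all lie in $M_{A_n,T}$, for $n$ large enough that moreover $N:=N_T(A_n)$ dwarfs $M:=\max_i(|a_i|+|b_i|)$ and $\mu(X\setminus Y_{A_n,T})$ is tiny. Writing $\widetilde B_i=\bigsqcup_{j\in I_i}T^j(A_n)$ and similarly $\widetilde C_i^\pm$ via index sets $I_i^\pm$, let $I_i^\circ\subseteq I_i$ consist of those $j$ for which $0\leq j+a_i,\,j+b_i<N$, $j+a_i\in I_i^+$ and $j+b_i\in I_i^-$, and define
\[
\sigma = \prod_{i=1}^k\prod_{j\in I_i^\circ} (j,\,j+a_i,\,j+b_i)\in\mathfrak S(\{0,\dots,N-1\}).
\]
The pairwise disjointness of all the sets $\widetilde B_i,\widetilde C_i^+,\widetilde C_i^-$ makes these $3$-cycles pairwise disjoint, so $\sigma$ is a well-defined even permutation and $\rho_{A_n}(\sigma)\in\mathfrak A_{A_n,T}$.

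Finally, $\rho_{A_n}(\sigma)$ and $U'$ agree outside a set $E$ consisting of the small $\mu$-errors between $\mathcal Q$ and $\widetilde{\mathcal Q}$, the boundary atoms discarded in passing from $I_i$ to $I_i^\circ$, and the set $X\setminus Y_{A_n,T}$. Since both $\rho_{A_n}(\sigma)$ and $U'$ move any point by at most $M$ steps along the $T$-orbit, we get $d^1_T(\rho_{A_n}(\sigma),U')\leq 2M\mu(E)<\epsilon$ for large $n$. The hard part I expect is the bookkeeping in constructing $\sigma$: making sure the index-level $3$-cycles are automatically disjoint despite only approximate agreement with $\mathcal Q$, and controlling the boundary atoms lost at the top and bottom of the Rokhlin tower. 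Both are handled by choosing $n$ so large that $N\mu(A_n)$ is near $1$ while $N\gg M$, exploiting the defining property of a rank one basis: $N_T(A_n)\to\infty$ while $\mu(A_n)\to 0$.
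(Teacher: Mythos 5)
Your proof is correct, but it takes a genuinely different route from the paper's. You reduce to $3$-cycles via Proposition \ref{prop: derived generated by 3-cycles}, discretize the cocycle of a $3$-cycle, and then build an explicit \emph{even} permutation of the tower levels as a product of disjoint index-level $3$-cycles $(j,\,j+a_i,\,j+b_i)$; the bookkeeping you flag does go through, since the index sets $I_i,I_i^+,I_i^-$ are pairwise disjoint (so the cycles are automatically disjoint and $\sigma$ is even) and only $O(kM)$ boundary atoms are discarded, of total measure $O(kM\mu(A_n))\to 0$. The paper instead first observes that $\mathfrak S_{A_n,T}=\mathfrak A_{A_n,T}\cup \rho_{A_n}((0\;1))\,\mathfrak A_{A_n,T}$ with $d^1_T(\rho_{A_n}((0\;1)),\id_X)\to 0$, so density of $\bigcup_n\mathfrak S_{A_n,T}$ suffices; it then invokes Theorem \ref{thm: topo gen by I_phi} (topological generation by the involutions $I_{T,A}$) and simply approximates $A$ by $B_n\in M_{A_n,T}$ avoiding the top and bottom levels, so that $I_{T,B_n}\in\mathfrak S_{A_n,T}$ and $I_{T,B_n}\to I_{T,A}$. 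What each approach buys: the paper's generators displace points by a single $T$-step, so the $\LL^1$ estimate is immediate and no cocycle discretization is needed, at the price of the parity reduction from $\mathfrak S$ to $\mathfrak A$; your argument is heavier on combinatorics (the displacement bound $M$ enters the error term, and you must control boundary levels), but it lands directly in the alternating subgroups and makes the approximating permutations completely explicit.
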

\begin{proof}
Let $U_n$ be the involution defined by $U_n=\varphi_{A_n}((0\; 1))$, then $d^1_T(U_n,1)\to 0$. Since  $\mathfrak S_{A_n,T}=\mathfrak A_{A_n,T}\cup U_n\mathfrak A_{A_n,T}$, we conclude that  the density of $\bigcup_{n\in\N}\mathfrak A_{A_n,T}$ will follow from that of $\bigcup_{n\in\N}\mathfrak S_{A_n,T}$ which will thus prove instead. 

Since $[T]_1'$ is topologically generated by involutions of the form $I_{T,A}$ (cf. Theorem \ref{thm: topo gen by I_phi}), we only need to approximate such involutions by elements of $\bigcup_{n\in\N}\mathfrak S_{A_n,T}$. So let $A\subseteq X$, we then have a sequence $(B_n)$ of elements of $M_{A_n,T}$ with $B_n\to A$. Furthermore we may assume that $B_n$ is disjoint from $A_n\sqcup T^{N(A_n)-1}$ since the measure of $A_n\sqcup T^{N(A_n)-1}$ tends to zero as $n$ tends to infinity. This implies $I_{B_n,T}\in \mathfrak S_{A_n,T}$ and since  $I_{B_n,T}\to I_{A,T}$ we get the desired conclusion. 
\end{proof}

%

\begin{lemm}\label{lem: generating alternating group}
Let $T$ be a measure preserving transformation, let $A\subseteq X$ and let $n\in\N$ such that $3\leq n \leq N_T(A)-2$. Consider the element $U=\rho_A((0\quad1\quad\cdots\quad n-1))$. Then the group generated by $T$ and $U$ contains $\mathfrak A_{A,T}$. 
\end{lemm}
\begin{proof}
By conjugating $U$ by powers of $T$, we see that the group generated by $T$ and $U$ contains all the 
$\rho_A((i\quad i+1\quad\cdots\quad i+n-1))$ for $i+n-1\leq N(A)$. The lemma then follows from the fact that $\mathfrak A(\{0,...,N(A)-1\})$ is contained in the group generated by the $(i\quad i+1\quad\cdots\quad i+n-1)$ for $0\leq i\leq N(A)-n$. 
We briefly recall the proof of the latter fact for the reader's convenience. 

Denote by $G$ the group generated by the $(i\quad i+1\quad\cdots\quad i+n-1)$ for $0\leq i\leq N(A)-n$. First note that $G$ contains the permutation $$(n\quad n-1\quad\cdots\quad 1)(0\quad 1\cdots\quad  n-1)=(0\quad n \quad n-1)$$ so it also contains $(0\quad n \quad n-1)^2=(0\quad n-1\quad n)$. Conjugating by a power of $(1\quad 2\quad\cdots\quad n)$, we obtain that $G$ contains the element $(0\quad 1\quad 2)$. Conjugating again by appropriate cycles we get that $G$ contains all the $(i\quad i+1\quad i+2)$ for $0\leq i\leq n-3$, and since the latter generate the group $\mathfrak A(\{0,...,N(A)-1\})$ the result follows.  
\end{proof}

The usual way of disjointifying a countable family of sets $(B_n)$ is to let $B'_n:=B_n\setminus \bigcup_{m<n}B_m$, but note that letting $B'_n=B_n\setminus \bigcup_{m>n}B_m$ also works. We will need to apply this idea to measure-preserving transformations so as to make their support disjoint. 

For $p\in\N$, let us call a \textbf{$p$-cycle} a transformation all whose orbits have cardinality either $1$ or $p$. Given a countable family $(U_n)_{n\in\N}$ where each $U_n$ is a $p_n$-cycle, we explain how to change them so that they have disjoint support. For each $n$, we  let $V_n$ be the transformation induced by $U_n$ on the biggest $U_n$-invariant set contained in
$\supp U_n\setminus \bigcup_{m>n}\supp U_m$.
Note that since $U_n$ is a $p_n$-cycle, we can easily compute the support of $V_n$ which is
$$\supp V_n=\supp U_n\setminus \left(\bigcup_{i=0}^{p_n-1}U_n^i\left(\bigcup_{m>n}\supp U_m\right)\right).$$
Of course this support might be empty, but observe that if the supports of the remaining $U_m$'s are very small then $V_n$ will actually be very close to $U_n$. The sequence $(V_n)$ obtained this way is called the \textbf{disjointification} of the sequence $(U_n)$.

We are now ready to prove the main result of this section.

\begin{theo}
Let $T$ be a rank one transformation. Then the topological rank of its $\LL^1$ full group $[T]_1$ is equal to $2$. 
\end{theo}
\begin{proof}
Let $(p_n)_{n\in\N}$ be the increasing enumeration of prime numbers. Let $\epsilon_n$ be a sequence of positive real numbers decreasing to zero, e.g. $\epsilon_n=2^{-n}$. Let $(A_n)$ be a one-dimensional ranking of $T$.

Using lemma \ref{lem: generating alternating group}, one can now build by induction an increasing sequence  of integers $(k_n)$ and positive real numbers $(\delta_n)$ such that for all $n$, if we let $$U_n:=\varphi_{A_{k_n}}((0\quad 1\cdots p_n-1))$$ then the following conditions are satisfied.
\begin{enumerate}[(i)]
\item \label{cond: delta_n} Any element $V_n$ which is $\delta_n$-close to $U_n$ satisfies that the group generated by $T$ and $V_n$ contains every element of $\mathfrak A_{A_{k_n,T}}$ up to an $\epsilon_n$ error.
\item \label{cond: V_n approximate U_n} For  every $m<n$, we have $\displaystyle p_m\sum_{k=m+1}^{n}\mu(\supp U_k)< \delta_m$.
\item \label{cond: we can appply infinite product trick} We have $k_n\geq 4^{n2^n+2^n}$.
\end{enumerate}

Consider the sequence $(V_n)$ defined as the disjointification of the $U_n$'s. Thanks to condition (\ref{cond: V_n approximate U_n}) we have that $V_n$ is $\delta_n$-close to $U_n$. By condition (\ref{cond: delta_n}) the group $\la T,V_n\ra$ contains every element of $\mathfrak A_{A_{k_n,T}}$ up to an $\epsilon_n$ error.

Now let $V:=\prod_{n\in\N} V_n$. We first establish a general inequality which will yield that $V\in[T]_1$ and moreover the closed group generated by $V$ contains every $V_n$. 

\begin{claim}
For all $n\in\N$ we have \begin{align}\label{ineq: prod trick works}
\left( \prod_{i<n}p_i \right)\left(\sum_{i\geq n}\frac{p_i}{k_i}\right)\leq2^{-n2^n}.
\end{align}
\end{claim}
\begin{claimproof}
We first need an explicit bound on $p_n$. A well-known way to get such a bound is as follows: Euclid's argument that there are infinitely many prime numbers actually shows that  $p_{n+1}\leq p_0\cdots p_n+1$. From this, a straightforward induction yields the bound $$p_n\leq 2^{2^n}.$$
Now by condition (\ref{cond: we can appply infinite product trick}) we have $k_n\geq 4^{n2^{n}+2^n}$, so for every $n\geq 1$ we have 
\begin{align}\label{ineq: disjointness trick can be carried}
 \sum_{i\geq n}\frac{p_i}{k_i} & \leq 4^{-n2^n} \sum_{i\geq n}\frac{2^{2^{i}}}{4^{2^i}}\\
 &\leq 4^{-n2^n}
\end{align}
Now $\prod_{i<n}p_i\leq (p_n)^n\leq 2^{n2^n}$ so combining this with the previous inequality we see that \eqref{ineq: prod trick works} holds.
\end{claimproof}
Now observe that for every $n\in \N$ we have $d^1_T({V_n},\id_X)_1\leq d^1_T(U_n,\mathrm{id}_X)\leq\frac{2p_n}{k_n}$. The inequality \eqref{ineq: prod trick works} for $n=0$ yields $\sum_{i\geq 0}\frac{2p_i}{k_i}<+\infty$ so that $V\in[T]_1$ as announced.

Fix $n\in\N$. For every $m\geq n+1$ let $P_m=\prod_{i< m, i\neq n} p_i$ and let $t_m\in\{1,...,p_n-1\}$ such that $P_mt_m=1\mod p_n$. Now observe that
$$V^{P_mt_m}=V_n\prod_{i\geq m}V_i^{P_mt_m}.$$

Since $t_m\leq p_n$, we have $P_mt_m\leq\prod_{i< m} p_i$ so that  
$$d^1_T\left(\prod_{i> m}V_i^{P_mt_m},\id_X\right)\leq \left( \prod_{i< m}p_i \right)\sum_{i\geq k}d^1_T(V_i,\id_X).$$
But for every $i$ we have $\displaystyle\norm{V_i}_1\leq \frac{2p_i}{k_i}$ so we conclude by inequality \eqref{ineq: prod trick works} that we have $d^1_T(\prod_{i> m}V_i^{P_mt_m},\id_X)\to 0$ and hence $$V^{P_mt_m}\to V_n\quad[m\to+\infty].$$
So for every $n\in \N$ the closed subgroup generated by $V$ contains $V_n$. 

Since the group $\la T,V_n\ra$ contains every element of $\mathfrak A_{A_{k_n,T}}$ up to an $\epsilon_n$ error and $\epsilon_n\to 0$, we conclude by Proposition \ref{prop: rank one alternating is dense in derived} that the group generated by $T$ and $V$ is dense in $[T]_1$ which has thus topological rank at most $2$. Since the $\LL^1$ full group $[T]_1$ is not abelian, we conclude that $[T]_1$ has topological rank 2 as wanted. 
\end{proof}

\section{Further remarks and questions}

\subsection{\texorpdfstring{$\LL^p$}{Lp} orbit equivalence and \texorpdfstring{$\LL^p$}{Lp} full orbit equivalence} \label{sec: lp full OE}

It is  an instructive exercise to show that given two measure preserving actions $\alpha:\Gamma\to\Aut(X,\mu)$, $\beta:\Lambda\to\Aut(X,\mu)$ and a measure-preserving transformation $T$, the following are equivalent:
\begin{enumerate}[(i)]
	\item $T$ sends $\alpha(\Gamma)$-orbits to $\beta(\Lambda)$-orbits ;
	\item $T\alpha(\Gamma)T\inv$ is a subgroup of  $[\beta(\Lambda)]$ and 
	$T\inv\beta(\Lambda)T$ is a subgroup of $[\alpha(\Gamma)]$;
	\item $T[\alpha(\Gamma)]T\inv=[\beta(\Gamma)]$.
\end{enumerate}
If $T$ satisfies any of the above three conditions, one says that $T$ is an \textbf{orbit equivalence} between the actions $\alpha$ and $\beta$, and that the latter are \textbf{orbit equivalent}.

We then have two natural $\LL^p$ versions of orbit equivalence, which are 
direct translations of item (ii) and (iii) from the above equivalence. The 
first one was defined by Austin in \cite{austinBehaviourEntropyBounded2016} and 
the second appears to be new.

\begin{defi}
	Let $p\in[1,+\infty]$. Two measure-preserving actions of finitely generated groups  $\alpha:\Gamma\to\Aut(X,\mu)$ and $\beta:\Lambda\to\Aut(X,\mu)$ are called  \textbf{$\LL^p$-orbit equivalent} if there is $T\in \Aut(X,\mu)$ such that 
	$$T\alpha(\Gamma)T\inv\leq [\beta(\Lambda)]_p\text{ and }T\inv 
	\beta(\Lambda)T\leq[\alpha(\Gamma)]_p.$$
\end{defi}
\begin{rema} For $p=+\infty$, the group 
$[\alpha(\Gamma)]_p$ is  
the group of all $T\in[\alpha(\Gamma)]$ such that the distance from $T(x)$ to 
$x$ is essentially bounded. The $\LL^\infty$ metric on this group is the 
discrete metric, so it is not a Polish group.
\end{rema}

\begin{defi}
	Let $p\in[1,+\infty]$. Two measure-preserving actions of finitely generated groups  $\alpha:\Gamma\to\Aut(X,\mu)$ and $\beta:\Lambda\to\Aut(X,\mu)$ are called  \textbf{$\LL^p$-fully orbit equivalent} if there is $T\in \Aut(X,\mu)$ such that 
	$$T[\alpha(\Gamma)]_pT\inv=[\beta(\Lambda)]_p.$$
\end{defi}

It is clear that $\LL^p$ full orbit equivalence implies $\LL^p$ orbit equivalence, and that $\LL^\infty$ orbit equivalence is equivalent to $\LL^\infty$ full orbit equivalence. Moreover, for all $p<q$ we have that $\LL^q$ orbit equivalence implies $\LL^p$ orbit equivalence, but is is unclear that $\LL^q$ full orbit equivalence implies $\LL^p$ full orbit equivalence. Also note that all these notions collapse to flip-conjugacy for ergodic $\Z$-actions by Belinskaya's theorem. 

A natural intermediate question is to ask what it means for two graphings to share the same $\LL^1$ full group, hoping that the identity map has to be induce an $\LL^\infty$ orbit equivalence between them. However, this is not so, as the following example of Sébastien Gouëzel shows.

\begin{theo}[Gouëzel]\label{thm: not linf equivalent} Given an aperiodic 
$T\in\Aut(X,\mu)$, there is $T'\in\Aut(X,\mu)$ such that $[T]_1=[T']_1$ but 
$T'$ does not belong to the $\LL^\infty$ full group of $T$.
\end{theo}
\begin{proof}
First note that for every $S\in\Aut(X,\mu)$ we have $[STS\inv]_1=S[T]_1S\inv$, so for every $S\in [T]_1$ we have $[STS\inv]_1=[T]_1$. 
So it suffices to find $S\in [T]_1$ such that  $T'=STS\inv$ does not belong to the $\LL^\infty$ full group of $T$. 

Now observe that given any $S\in [T]_1$, the function $x\mapsto d_T(x,STS\inv(x))$ is bounded if and only if the function $x\mapsto d_T(S(x),ST(x))$ is bounded. 
It thus suffices to find $S\in[T]_1$ such that the function $x\mapsto d_T(S(x),ST(x))$ is unbounded. 
This can be achieved by taking $A\subseteq X$ with unbounded return time via Lemma \ref{lem: unbounded return}, and considering the induced transformation $S=T_A$. Indeed if $x\in A$ has return time $n_{T,A}(x)\geq 2$ then $ST(x)=T(x)$ and so $d_T(S(x),ST(x))=n_{T,A}(x)-1$. 
\end{proof}

Gouëzel's example arises as a conjugate of $T$ by some $S\in[T]_1$, but it is natural to ask more generally which $S\in\Aut(X,\mu)$ satisfy $[STS\inv]_1=[T]_1$. Note that such an $S$ has to be an automorphism of the equivalence relation $\mathcal R_T$. Two sources of such $S$ are provided by the group of measure-preserving transformations which conjugate $T$ to $T^{\pm1}$, and elements of the $\LL^1$ full group of $T$. Still, we do not have a good understanding of the group of $S\in\Aut(X,\mu)$ satisfying $[STS\inv]_1=[T^{\pm1}]_1$, and the following question is open.

\begin{ques}Let $T$ be an ergodic measure-preserving transformation. Is the 
group of $S\in\Aut(X,\mu)$ satisfying $[T]_1=[STS\inv]_1$ Borel ? If so, is it 
Polishable ?
\end{ques} 

Note that by Fremlin's reconstruction theorem, every abstract automorphism of $[T]_1$ has to be the conjugacy by some $S\in\Aut(X,\mu)$, so the above question is actually about the automorphism group of $[T]_1$. Also note that the automorphism group of the full group of any measure preserving equivalence relation is Polishable, see \cite[Theorem 6.1]{KechrisGlobalaspectsergodic2010}.

%

\subsection{Symmetric \texorpdfstring{$\LL^p$}{Lp} full groups}\label{sec: symmetric lp full group}

The notions of symmetric (and alternating) topological full groups have natural analogues in our setup. Recall from \cite[Section 5.3]{lemaitremeasurableanaloguesmall2018} that for any $p\in[1,+\infty[$ and any graphing $\Phi$, the $\LL^p$ full group of $\Phi$ is defined as the group of all elements $T$ of the full group of $\mathcal R_\Phi$ such that 
$$\int_Xd_\Phi(x,T(x))^pd\mu(x)<+\infty.$$
It is a Polish group for the metric $d^p_\Phi$ defined by 
$$d^p_\Phi(T_1,T_2)=\left[\int_X d_\Phi(T_1(x),T_2(x))^pd\mu(x)\right]^{1/p},$$ and we  define the following closed normal subgroups.

\begin{defi}
	The \textbf{symmetric $\LL^p$ full group} of a graphing $\Phi$ is the closed subgroup of $[\Phi]_p$ generated by all $n$-cycles for $n\geq 2$. The \textbf{alternating $\LL^p$ full group} is defined as the closed subgroup generated by all $3$-cycles. 
\end{defi}

Note that if $p=1$, both subgroups coincide with the derived $\LL^1$ full group of $\Phi$, and I don't know whether the same holds for $p>1$. Nevertheless,
if $\Phi$ is aperiodic, the same proof as Proposition \ref{prop: derived generated by 3-cycles}	shows that the alternating $\LL^p$ full group of $\Phi$ coincides with its symmetric $\LL^p$ full group. One can also adapt the arguments from \cite{lemaitremeasurableanaloguesmall2018} to show that in the aperiodic case, closed normal subgroups of the symmetric $\LL^p$ full group correspond to invariant sets; in particular the symmetric $\LL^p$ full group of $\Phi$ is topologically simple iff $\Phi$ is ergodic. 
Moreover, the proof of Theorem \ref{thmi: finite topo rank} can easily be adapted to show the following.

\begin{theo} The topological rank of the symmetric $\LL^p$ full group of a 
measure-preserving aperiodic action of a finitely generated group is finite if 
and only if the action has finite Rokhlin entropy.
\end{theo}

In a work in progress with C. Conley and R. Tucker-Drob, we show that for ergodic $\Z$-actions, the symmetric $\LL^p$ full group coincides with the derived $\LL^p$ full group, and that the topological abelianization of the whole $\LL^p$ full group is $\Z$, just as in the $\LL^1$ case. In particular, we can deduce that the whole $\LL^p$ full group of an ergodic measure-preserving transformation has finite topological rank if and only if the transformation has finite entropy, and answer positively question 5.5 from \cite{lemaitremeasurableanaloguesmall2018} in the case the graphing comes from a $\Z$-action.

\subsection{Topological rank of (derived) \texorpdfstring{$\LL^1$}{L1} full groups}

Theorem \ref{thmi: finite topo rank} is not satisfactory in two ways: first, as in the $\Z$ case, a more quantitative statement would be desirable. In particular, establishing a lower bound on the topological rank in terms of the Rokhlin entropy would be very nice. Second, it would be more natural to have a similar statement for the whole $\LL^1$ full group, but in order to do so one has to understand the topological abelianization of $\LL^1$ full groups. We would like to ask the following question, noting that a positive answer would imply that Theorem \ref{thmi: finite topo rank} also holds for the whole $\LL^1$ full group.

\begin{ques}
	Let $\Gamma$ be a finitely generated group. Is the topological abelianization of the $\LL^1$ full group of any ergodic measure-preserving $\Gamma$-action always topologically finitely generated ? 
\end{ques}

At the moment, the question is open even for $\Gamma=\Z^2$. It was answered in 
the affirmative for $\Gamma=\Z$ in \cite{lemaitremeasurableanaloguesmall2018}, 
and with Conley and Tucker-Drob we have a proof that the answer is also yes 
when $\Gamma$ is the infinite dihedral group (we actually show that in this 
case the topological abelianization is always trivial, thus obtaining examples 
of topologically simple $\LL^1$ full groups). Also note that the topological 
rank of the topological abelianization can be greater than the rank of the acting 
group: for a free ergodic $\Z^2$ action, if the two canonical generators have 
diffuse ergodic decomposition then $\R^2$ can be obtained as a quotient of the 
topological abelianization by integrating the cocycles, but it has topological 
rank $3>2$.


\bibliographystyle{cdraifplain}
\bibliography{extracted}

\end{document}